\newtheorem{theorem}{Theorem}[section]
\newtheorem{proposition}[theorem]{Proposition}%[section]
\newtheorem{lemma}[theorem]{Lemma}%[section]
\newtheorem{corollary}[theorem]{Corollary}%[section]
\newtheorem*{remark}{Remark}%[section]
\newcommand{\geng}{\textsc{Geng}}
\newcommand{\nauty}{\textsc{Nauty}}
\newcommand{\mathematica}{\textsc{Mathematica}}
\colorlet{colfg}{black}
\colorlet{colgraphv}{colfg!75!white}
\colorlet{colgraphe}{colfg!55!white}
\tikzstyle{vertex}=[fill=colgraphv,circle,inner sep=0pt, minimum size=4pt]
\tikzstyle{edge}=[line width=1.5pt,colgraphe]
\title{Graph rigidity properties of Ramanujan graphs}
\author{Sebastian M. Cioab\u{a}\thanks{Department of Mathematical Sciences,
University of Delaware, Newark, DE 19716, USA. \newline E-mail: \texttt{cioaba@udel.edu}} 
\and Sean Dewar\thanks{Johann Radon Institute for Computational and Applied Mathematics (RICAM), Austrian Academy of Sciences, 4040 Linz, Austria. E-mail: \texttt{sean.dewar@ricam.oeaw.ac.at}}
\and Georg Grasegger\thanks{Johann Radon Institute for Computational and Applied Mathematics (RICAM), Austrian Academy of Sciences, 4040 Linz, Austria. E-mail: \texttt{georg.grasegger@ricam.oeaw.ac.at}} 
\and Xiaofeng Gu\thanks{Department of Computing and Mathematics, University of West Georgia, Carrollton, GA 30118, USA. \newline E-mail: \texttt{xgu@westga.edu}}
}
\begin{document}
\date{}
\maketitle

\begin{abstract}
A recent result of Cioab\u{a}, Dewar and Gu implies that any $k$-regular Ramanujan graph with $k\geq 8$ is globally rigid in $\mathbb{R}^2$. In this paper, we extend these results and prove that any $k$-regular Ramanujan graph of sufficiently large order is globally rigid in $\mathbb{R}^2$ when $k\in \{6, 7\}$, and when $k\in \{4,5\}$ if it is also vertex-transitive. These results imply that the Ramanujan graphs constructed by Morgenstern in 1994 are globally rigid. We also prove several results on other types of framework rigidity, including body-bar rigidity, body-hinge rigidity, and rigidity on surfaces of revolution. In addition, we use computational methods to determine which Ramanujan graphs of small order are globally rigid in $\mathbb{R}^2$.
\end{abstract}

{\small \noindent \textbf{MSC:} 52C25, 05C50, 05C40}

{\small \noindent \textbf{Keywords:} rigidity, global rigidity, Ramanujan graph, eigenvalue, connectivity, spanning tree}
\section{Introduction and main results}

In this paper, by a graph, we always mean a simple graph unless otherwise stated, and we also reserve the term multigraph for a graph with possible parallel edges but no loops.
A \textbf{$d$-dimensional framework} is a pair $(G, p)$, where $G$ is a graph and $p$ is a map from $V(G)$ to $\mathbb{R}^d$. Roughly speaking, it is a straight line realization of $G$ in $\mathbb{R}^d$. 
Given $\|\cdot\|$ is the Euclidean norm for $\mathbb{R}^d$,
we say two frameworks $(G, p)$ and $(G, q)$ are \textbf{equivalent} if $\|p(u) - p(v) \| = \|q(u) - q(v) \|$ holds for every edge $uv\in E(G)$,
and \textbf{congruent} if $\|p(u) - p(v) \| = \|q(u) - q(v) \|$ holds for every $u, v\in V(G)$. A framework $(G,p)$ is \textbf{generic} if the coordinates of its points are algebraically independent over the rationals. The framework $(G, p)$ is \textbf{rigid} if there exists $\varepsilon >0$ such that if $(G, p)$ is equivalent to $(G, q)$ and $\|p(u) - q(u) \| < \varepsilon$ for every $u\in V(G)$, then $(G, p)$ is congruent to $(G, q)$. As observed in \cite{AsRo78}, a generic realization of $G$ is rigid in $\mathbb{R}^d$ if and only if every generic realization of $G$ is rigid in $\mathbb{R}^d$. Hence, generic rigidity can be considered as a property of the underlying graph. Because of this,
a graph is defined to be \textbf{rigid} in $\mathbb{R}^d$ if every/some generic realization of $G$ is rigid in $\mathbb{R}^d$.

A $d$-dimensional framework $(G, p)$ is \textbf{globally rigid} if every framework that is equivalent to $(G, p)$ is congruent to $(G, p)$. It was proved in \cite{GortlerThurstonHealey10} that if there exists a generic framework $(G,p)$ in $\mathbb{R}^d$ that is globally rigid, then any other generic framework $(G,q)$ in $\mathbb{R}^d$ is also be globally rigid. Following from this, a graph $G$ is defined to be \textbf{globally rigid} in $\mathbb{R}^d$ if there exists a globally rigid generic framework $(G,p)$ in $\mathbb{R}^d$. A closely related concept to global rigidity is redundant rigidity. A graph $G$ is \textbf{redundantly rigid} in $\mathbb{R}^d$ if $G-e$ is rigid in $\mathbb{R}^d$ for every edge $e \in E(G)$. It was proved by Hendrickson \cite{Hendrickson92} that any globally rigid graph in $\mathbb{R}^d$ with at least $d+2$ vertices is $(d+1)$-connected and redundantly rigid in $\mathbb{R}^d$. Hendrickson \cite{Hendrickson92} also conjectured the converse. It can be shown easily that it is true for $d=1$, however the conjecture is not true for $d \geq 3$ \cite{Conn91}. The final case of the conjecture, i.e., when $d=2$, was confirmed to be true by the combination of a result of Connelly~\cite{Conn05} and a result of Jackson and Jord\'{a}n~\cite{JaJo05}. Thus, a graph $G$ is globally rigid in $\mathbb{R}^2$ if and only if $G$ is 3-connected and redundantly rigid, or $G$ is a complete graph on at most three vertices \cite{Conn05, JaJo05}.

Rigidity in $\mathbb{R}^2$ has been well studied. For a subset $X\subseteq V(G)$, let $G[X]$ be the subgraph of $G$ induced by $X$ and $E(X)$ denote the edge set of $G[X]$. A graph $G$ is \textbf{sparse} if $|E(X)|\le 2|X|-3$ for every $X\subseteq V(G)$ with $|X|\ge 2$. By definition, any sparse graph is simple. If in addition $|E(G)|=2|V(G)|-3$, then $G$ is called \textbf{$(2, 3)$-tight}. A graph $G$ is rigid in $\mathbb{R}^2$ if and only if $G$ contains a spanning $(2, 3)$-tight subgraph. This characterization was first discovered by Pollaczek-Geiringer \cite{Poll1927} and rediscovered by Laman \cite{Lama70}, and thus is also called the \textbf{Geiringer-Laman condition} in~\cite{Lovasz19}. A $(2, 3)$-tight graph is also called a \textbf{Laman graph}.

Lov\'{a}sz and Yemini \cite{LoYe82} gave a new characterization of rigid graphs and showed that $6$-connected graphs are rigid in $\mathbb{R}^2$. They also constructed infinitely many $5$-connected graphs that are not rigid in $\mathbb{R}^2$, showing that the connectivity condition was indeed tight. In fact, they proved a stronger result that every $6$-connected graph is rigid in $\mathbb{R}^2$ even with the removal of any three edges. This result, together with the combinatorial characterization of global rigidity mentioned above, implies that $6$-connected graphs are also globally rigid in $\mathbb{R}^2$ \cite[Theorem 7.2]{JaJo05}. This result was improved by Jackson and Jord\'an~\cite{JaJo09} using an idea of mixed connectivity, in which they showed that a simple graph $G$ is globally rigid in $\mathbb{R}^2$ if $G$ is $6$-edge-connected, $G-u$ is $4$-edge-connected for every vertex $u$ and $G-\{v, w\}$ is $2$-edge-connected for any vertices $v,w\in V(G)$.

By using a partition result of \cite{Gu18}, Cioab\u{a}, Dewar and Gu studied spectral conditions for rigidity and global rigidity in $\mathbb{R}^2$ in~\cite{CDG21}. The matrix $L(G)=D(G)-A(G)$ is called the \textbf{Laplacian matrix} of $G$, where $A(G)$ and $D(G)$ are the adjacency matrix and diagonal degree matrix of $G$, respectively. Let $G$ be a graph with $n$ vertices. For $1\leq i\leq n$, we use $\lambda_i(G)$ to denote the $i$-th largest eigenvalue of $A(G)$, and use $\mu_i(G)$ to denote the $i$-th smallest eigenvalue of $L(G)$. The second smallest eigenvalue of $L(G)$, $\mu_2(G)$, is known as the \textbf{algebraic connectivity} of $G$. Similarly to connectivity, a graph with a sufficiently high algebraic connectivity is also both rigid and globally rigid.
\begin{theorem}[Cioab\u{a}, Dewar and Gu~\cite{CDG21}]
\label{specrigid}
Let $G$ be a graph with minimum degree $\delta\ge 6$.
\begin{enumerate}[topsep=1pt,itemsep=0pt,leftmargin=9mm]
    \item If $\mu_{2}(G)> 2+ \frac{1}{\delta -1}$, then $G$ is rigid in $\mathbb{R}^2$.
    \item If $\mu_{2}(G)> 2+ \frac{2}{\delta -1}$, then $G$ is globally rigid in $\mathbb{R}^2$.
\end{enumerate}
\end{theorem}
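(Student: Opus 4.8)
The plan is to derive both parts from combinatorial rigidity criteria and then to verify those criteria from the spectral hypothesis by bounding, for every vertex partition, the number of edges running between its parts. For part (i), the Geiringer--Laman condition reduces rigidity to the existence of a spanning $(2,3)$-tight subgraph; rather than exhibiting one directly, I would invoke the partition reformulation of the rank of the $2$-dimensional rigidity matroid in the spirit of Lov\'{a}sz--Yemini~\cite{LoYe82}, as developed through the partition result of \cite{Gu18}, which guarantees rigidity once every partition $\mathcal P=\{V_1,\dots,V_t\}$ of $V(G)$ into $t\ge 2$ parts carries enough crossing edges $e_G(\mathcal P)$, the required bound being of order $2(t-1)$. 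For part (ii), I would first observe that $\mu_2>2$ already forces $G$ to be $3$-connected through Fiedler's inequality $\mu_2\le\kappa(G)$, and then use the Hendrickson/Connelly/Jackson--Jord\'{a}n characterization recalled above~\cite{Hendrickson92,Conn05,JaJo05} to reduce global rigidity to redundant rigidity. Since $G-e$ is rigid for every edge $e$ exactly when the partition bound survives the deletion of an arbitrary crossing edge, redundant rigidity amounts to the same partition inequality with one extra crossing edge required in each partition.

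The technical heart is then a single family of inequalities asserting that, under the hypothesis, $e_G(\mathcal P)$ beats the combinatorial threshold for every partition. I would estimate the cut of each part in two independent ways. The Laplacian variational principle, applied to the vector that is constant on a set $X$, constant on its complement, and orthogonal to $\mathbf 1$, gives
\begin{equation*}
e_G(X,\,V\setminus X)\ \ge\ \frac{\mu_2}{n}\,|X|\,(n-|X|),
\end{equation*}
while the minimum-degree hypothesis gives the elementary bound $e_G(X,V\setminus X)\ge |X|(\delta-|X|+1)$. Writing $2e_G(\mathcal P)=\sum_i e_G(V_i,V\setminus V_i)$ and applying these part by part, I would control the ``small'' parts (those with $|V_i|\le\delta$, where $|V_i|(\delta-|V_i|+1)\ge\delta\ge 6$ already exceeds the per-part average needed) by the degree bound, and the few remaining ``large'' parts by the spectral bound.

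What remains is to check that, once $\mu_2>2+\tfrac{1}{\delta-1}$, the combination of the two estimates always clears the threshold $2(t-1)$; the global-rigidity case is identical except that the extra crossing edge forced by redundant rigidity is what upgrades the constant from $\tfrac{1}{\delta-1}$ to $\tfrac{2}{\delta-1}$. The main obstacle is precisely this optimization over the part sizes: the degree bound degrades, and eventually becomes vacuous, as a part grows past $\delta$, so the worst configurations mix one dense part of order $\Theta(\delta)$ with many small parts, and there neither estimate suffices alone — the two must be balanced quantitatively to land exactly on $2+\tfrac{1}{\delta-1}$. A secondary point to get right is the combinatorial reduction itself: one must ensure that the partition inequalities genuinely certify rigidity and redundant rigidity in the sense of the Lov\'{a}sz--Yemini/\cite{Gu18} theory, rather than some weaker tree-packing property, so that the correct additive constants are matched to the spectral counts.
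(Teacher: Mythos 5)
Your combinatorial reduction is where the proof breaks, and the issue you dismissed as a ``secondary point'' is in fact fatal: no partition-count criterion of the kind you posit can certify rigidity. By the Nash--Williams--Tutte theorem (Theorem~\ref{NaTu}), the condition ``$e_G(\mathcal{P})\ge 2(t-1)$ for every partition $\mathcal{P}$ of $V(G)$ into $t$ parts'' is exactly the existence of two edge-disjoint spanning trees, and your redundant-rigidity version with one extra crossing edge is implied by three edge-disjoint spanning trees; these are tree-packing properties, strictly weaker than rigidity. Concretely, take two copies of $K_{\delta+1}$ ($\delta\ge 6$) glued at a single vertex. This graph has minimum degree $\delta$, is $\delta$-edge-connected, and contains three edge-disjoint spanning trees (glue spanning trees of the two cliques at the shared vertex), so it satisfies $e_G(\mathcal{P})\ge 3(t-1)\ge 2(t-1)+1$ for every partition; yet it is not rigid in $\mathbb{R}^2$ --- the two cliques rotate freely about the shared vertex --- let alone redundantly or globally rigid. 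It is not a counterexample to Theorem~\ref{specrigid} itself, since Fiedler's inequality gives $\mu_2\le\kappa=1$ here; the point is that partition counts are blind to small \emph{vertex} cuts, so your intermediate criterion is satisfied by flexible graphs and can never imply rigidity, no matter how delicately the two spectral estimates are balanced. This blindness is precisely why Lov\'{a}sz and Yemini~\cite{LoYe82} require $6$-vertex-connectivity, and why the rank of the two-dimensional rigidity matroid is expressed via covers by overlapping sets rather than via partitions of $V(G)$.

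What the proof in \cite{CDG21} actually verifies (the present paper does not reprove Theorem~\ref{specrigid}, but its proof of Theorem~\ref{thm:ramanujan} follows the same template) are criteria sensitive to the deletion of one or two vertices --- Jackson--Jord\'{a}n-type mixed connectivity as in Theorem~\ref{thm:JJ} ($G$ is $6$-edge-connected, $G-u$ is $4$-edge-connected, $G-\{u,v\}$ is $2$-edge-connected), together with the packing machinery of \cite{Gu18}, which enters with connectivity hypotheses that detect vertex cuts, not as a raw count over partitions. This is also where the constants in the theorem really come from. If $G-\{u,v\}$ had a cut $(X,Y)$ with at most one crossing edge, the minimum degree forces $|X|,|Y|\ge\delta-1$, and the test vector equal to $1/|X|$ on $X$, $-1/|Y|$ on $Y$, and $0$ on $\{u,v\}$ (orthogonal to the all-ones vector) yields
\[
\mu_2(G)\;\le\;\frac{e(X,Y)\left(\frac{1}{|X|}+\frac{1}{|Y|}\right)^{2}+\frac{e(X,\{u,v\})}{|X|^{2}}+\frac{e(Y,\{u,v\})}{|Y|^{2}}}{\frac{1}{|X|}+\frac{1}{|Y|}}\;\le\;2+\frac{1}{|X|}+\frac{1}{|Y|}\;\le\;2+\frac{2}{\delta-1},
\]
contradicting the hypothesis of part (ii); the constant $2+\frac{1}{\delta-1}$ of part (i) arises from the analogous estimates for the weaker vertex-deleted conditions that suffice for rigidity alone. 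Your two cut bounds, your use of Fiedler's inequality to get $3$-connectivity, and the reduction of global rigidity to redundant rigidity via \cite{Hendrickson92,Conn05,JaJo05} are all correct and all appear in the genuine argument; they simply must be aimed at these vertex-deleted connectivity conditions, where the edges incident to the deleted vertices --- terms that never appear in a pure partition count --- are exactly what produces the $\frac{1}{\delta-1}$ and $\frac{2}{\delta-1}$.
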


In this paper, we investigate the rigidity properties of Ramanujan graphs. For $k\geq 3$, a connected $k$-regular graph $G$ is called a \textbf{Ramanujan graph} if $|\lambda_i(G)|\leq 2\sqrt{k-1}$ whenever $\lambda_i (G)\neq \pm k$ for every $1 \leq i \leq n$. These are sparse and highly connected graphs that are extremal with respect to their non-trivial adjacency matrix eigenvalues. They have been studied intensively over the last several decades (see \cite{AC,KS06,LPS88,morg92,Murty03,nilli}). As mentioned by Murty~\cite{Murty03}, the study of Ramanujan graphs involves diverse branches of mathematics such as combinatorics, number theory, representation theory, and algebra. By definition, if $G$ is a $k$-regular Ramanujan graph, then $\mu_2(G) \geq k - 2\sqrt{k-1}$. By Theorem~\ref{specrigid}, every Ramanujan graph of valency $k\geq 8$, is globally rigid in $\mathbb{R}^2$. In this paper, we study the cases of $k\leq 7$ and prove that all sufficiently large 6- and 7-regular Ramanujan graphs are globally rigid in $\mathbb{R}^2$.

\begin{theorem}\label{thm:ramanujan}
If $G$ is a $7$-regular Ramanujan graph with $n\geq 22$ vertices, then $G$ is globally rigid in $\mathbb{R}^2$. If $G$ is a $6$-regular Ramanujan graph with $n\geq 329$ vertices, then $G$ is globally rigid in $\mathbb{R}^2$.
\end{theorem}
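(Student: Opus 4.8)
The first thing I would check is that neither part of Theorem~\ref{specrigid} can be applied directly to the graphs at hand. For a connected \(k\)-regular Ramanujan graph all we know is that \(\mu_2(G)=k-\lambda_2(G)\ge k-2\sqrt{k-1}\), and this already falls short of the global-rigidity threshold \(2+\tfrac{2}{k-1}\) for both valencies: when \(k=7\) we have \(7-2\sqrt6\approx2.10<2.33\approx2+\tfrac{2}{6}\), and when \(k=6\) we have \(6-2\sqrt5\approx1.53<2.40=2+\tfrac25\). Since Ramanujan graphs are by construction extremal for \(\lambda_2\), there is no spectral room to improve \(\mu_2\), so a finer and more combinatorial argument is needed. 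My plan is therefore to abandon the \(\mu_2\) route and instead verify the mixed edge-connectivity conditions of Jackson and Jord\'an~\cite{JaJo09}: it suffices to prove that, for \(n\) large, \(G\) is \(6\)-edge-connected, \(G-u\) is \(4\)-edge-connected for every vertex \(u\), and \(G-\{v,w\}\) is \(2\)-edge-connected for every pair \(v,w\).

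Each of these is the statement that a graph has no small edge cut, and for all of them I would use the same engine: the tension between a small cut and the spectral gap, made precise by eigenvalue interlacing on a quotient matrix. Concretely, suppose \(S\subseteq V(G)\) with \(2\le|S|\le n/2\) has edge boundary \(e(S,\overline S)\le c\) for a small constant \(c\). The \(2\times2\) quotient matrix \(B\) of \(A(G)\) for the partition \(\{S,\overline S\}\) has constant row sums \(k\), so its largest eigenvalue is \(k\) and its other eigenvalue equals \(\tfrac{2e(S)}{|S|}+\tfrac{2e(\overline S)}{|\overline S|}-k\). A small boundary forces both induced parts to be almost \(k\)-regular, so this second eigenvalue is at least \(k-\tfrac{c}{|S|}-\tfrac{c}{|\overline S|}\); by interlacing it is at most \(\lambda_2(G)\le2\sqrt{k-1}\). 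Since \(k>2\sqrt{k-1}\) and the large part \(\overline S\) contributes only a negligible term \(c/|\overline S|\), this pins \(|S|\) below a small absolute constant, confining any violating set to a short list of tiny sets that I can then eliminate by hand using the density bound \(e(S)\le\binom{|S|}{2}\). This is exactly the mechanism by which the edge-connectivity of a regular graph is controlled by its second eigenvalue, and it is what trades a threshold on \(\mu_2\) for a threshold on the order \(n\).

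The hard part will be that two of the three conditions concern the vertex-deleted graphs \(G-u\) and \(G-\{v,w\}\), which are no longer regular: deleting a vertex lowers the degree of each of its neighbours, so the clean quotient computation must be redone with the reduced degrees, and a small cut of \(G-u\) has to be related back to an edge set of \(G\) through the (up to \(k\)) edges at the deleted vertices. Carefully tracking this degree loss, and dominating the error terms of the form \(1/|S|\) and \(1/(n-|S|)\) that arise for intermediate-size sets, is where essentially all of the work sits and where the explicit bounds \(n\ge22\) and \(n\ge329\) come out. It is also the reason the two valencies separate: for \(k=6\) the margin \(k-2\sqrt{k-1}\) is smaller and, worse, in \(G-\{v,w\}\) the minimum degree can drop to \(k-2=4\), leaving almost no slack above the required connectivities, so the estimates only close for much larger \(n\); for \(k=7\) there is enough room that they already succeed once \(n\ge22\).
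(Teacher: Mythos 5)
Your overall route is the same as the paper's: both arguments reduce to the three mixed edge-connectivity conditions of Theorem~\ref{thm:JJ}, and both verify the first two conditions exactly as you describe --- the two-part quotient/interlacing bound $e(S,\overline S)\ge (k-\lambda_2(G))|S||\overline S|/n$ of \cite{Cio10} for sets of intermediate size, the degree count $e(S,\overline S)\ge |S|(k+1-|S|)$ for small sets, and lifting a cut of $G-u$ back to a cut of $G$ at a cost of at most $k$ edges. The genuine gap is in the third condition, that $G-\{v,w\}$ is $2$-edge-connected, and for $k=6$ the mechanism you propose there cannot work at any $n$. Concretely, consider a copy of $K_5$ in which every vertex sends its two remaining edges to $\{v,w\}$, while $v$ and $w$ each have one further edge to the rest of the graph. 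Then $G-\{v,w\}$ has this $K_5$ as an isolated component, so the third condition fails; yet nothing in your toolkit detects this: the density bound is tight ($e(A)=\binom{5}{2}$), the reduced-degree count gives $e(A,B)\ge 5(4-4)=0$, and interlacing on $G-\{v,w\}$ is silent, because the component only forces $\lambda_2(G-\{v,w\})\ge 4$, which sits below $2\sqrt5\approx 4.47$ --- precisely because the reduced minimum degree $k-2=4$ has dropped under the Ramanujan bound $2\sqrt{k-1}$. The obstruction is a bounded-size local configuration, so the error terms $1/|A|$ and $1/|B|$ you plan to dominate are constants, and taking $n$ large changes nothing; your diagnosis that for $k=6$ ``the estimates only close for much larger $n$'' is therefore wrong --- as described, they never close. (For $k=7$ you are saved by the accident $k-2=5>2\sqrt6$.)

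What is missing is a global argument that plays the sizes of \emph{both} sides of the separation against $n$. The paper supplies it by proving that $G$ is $4$-vertex-connected: Fiedler's bound \cite{Fiedler} gives connectivity at least $k-2\sqrt{k-1}$, and for a putative vertex cut $T$ with $|T|\le 3$ separating $A$ from $B$, the Expander Mixing Lemma gives $kab/n\le 2\sqrt{k-1}\,\sqrt{ab(1-a/n)(1-b/n)}$, hence $n\gtrsim ab$, while the degree condition forces $a,b\ge k-|T|$; together these bound $n$ by an absolute constant ($112$ when $k=6$), a contradiction, and $4$-connectivity then makes $G-\{v,w\}$ $2$-connected, hence $2$-edge-connected. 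Alternatively, your lifting idea can be repaired within your own framework, but it needs a twist you did not state: attach $\{v,w\}$ to whichever side of the cut of $G-\{v,w\}$ receives the majority of their at most $2k$ incident edges; the lifted cut of $G$ then has at most $1+k$ crossing edges, contradicting the bound $e(S,\overline S)\ge 10$ (resp.\ $\ge 11$) already established for the first two conditions. With the naive lift suggested by your phrase ``the (up to $k$) edges at the deleted vertices,'' a pair deletion costs up to $2k$, giving $1+12=13$ for $k=6$, which does not contradict $10$, so the argument as written does not go through.
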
 

These bounds can be improved if $G$ is \textbf{vertex-transitive}; i.e., the automorphism group of $G$ acts transitively on the vertex set $V(G)$. In fact, rigidity and global rigidity of vertex-transitive graphs have been studied by Jackson, Servatius and Servatius~\cite{JSS07}. In particular, they proved that every $k$-regular vertex-transitive graph with $k\ge 6$ is globally rigid in $\mathbb{R}^2$. We therefore focus on $k=4, 5$ and prove the following theorem for $k$-regular Ramanujan graphs.

\begin{theorem}\label{thm:transitive}
Every vertex-transitive Ramanujan graph with degree at least 5 is globally rigid,
except the rigid but not globally rigid graph depicted in Figure \ref{fig:VertexTransitiveSpecialCase}. Every vertex-transitive Ramanujan graph with degree 4 that either has at least 53 vertices or is bipartite, is globally rigid.
\end{theorem}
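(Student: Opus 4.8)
The plan is to split the argument by degree. For degree $k \geq 6$ the statement is immediate from the theorem of Jackson, Servatius and Servatius~\cite{JSS07}, so the real content lies in the two remaining cases $k \in \{4,5\}$. Note that Theorem~\ref{specrigid} is unavailable here: it requires $\delta \geq 6$, while the Ramanujan bound only gives $\mu_2(G) \geq k - 2\sqrt{k-1}$, which equals $1$ for $k=5$ and about $0.54$ for $k=4$, far below the thresholds of Theorem~\ref{specrigid}. I would therefore establish global rigidity directly through the combinatorial characterization that a $3$-connected, redundantly rigid graph is globally rigid in $\mathbb{R}^2$~\cite{Conn05,JaJo05}. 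The $3$-connectivity is essentially free: a connected vertex-transitive $k$-regular graph is maximally edge-connected and has vertex connectivity at least $\tfrac{2}{3}(k+1)$, which is at least $4$ for $k \in \{4,5\}$. Hence the whole problem reduces to proving \emph{redundant rigidity}, i.e.\ that $G-e$ is rigid for every edge $e$.

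To obtain redundant rigidity I would combine the cover/tree-packing characterization of rigidity (in the spirit of Lov\'asz and Yemini~\cite{LoYe82} and of the partition results of~\cite{Gu18}) with the expander mixing lemma supplied by the Ramanujan bound $|\lambda_i| \leq 2\sqrt{k-1}$. The obstruction to rigidity of $G-e$ is a cover of its edge set by vertex sets $X_1,\dots,X_t$ for which $\sum_i (2|X_i|-3)$ drops below $2n-3$; such a cover forces either unusually dense induced subgraphs or a sparse separating structure. The expander mixing lemma bounds the number of edges inside any $X \subseteq V(G)$ by roughly $\tfrac{k|X|^2}{2n} + \sqrt{k-1}\,|X|$, and combined with the fact that vertex-transitivity pins the edge connectivity at exactly $k$ and forces the atoms of small edge cuts to be trivial, this rules out every obstructing cover once $n$ is large enough --- precisely the regime in which an analogue of the Jackson--Jord\'an mixed-connectivity criterion~\cite{JaJo09} can be made to hold. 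For $k=5$ the margins are comfortable and the argument closes for all but finitely many small orders, which I would settle by computer; this is where the single exceptional graph of Figure~\ref{fig:VertexTransitiveSpecialCase} appears.

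The degree-$4$ case is the genuine difficulty. Here the spectral gap is smallest and redundant rigidity is \emph{tight}: after deleting an edge, $G-e$ has exactly $2n-1$ edges yet must still contain a spanning $(2,3)$-tight subgraph with $2n-3$ edges, so there is almost no slack in the counting. Pushing the expander-mixing estimate through the cover condition forces a quantitative threshold, and optimizing the constants is what produces the bound $n \geq 53$; below it the density term $\tfrac{k|X|^2}{2n}$ is simply too large to preclude an obstructing cover. For bipartite graphs I would instead invoke the bipartite (Haemers-type) form of the expander mixing lemma, which exploits the spectral symmetry forced by bipartiteness to give a strictly sharper edge-distribution bound; this removes the size restriction entirely and covers every bipartite $4$-regular vertex-transitive Ramanujan graph. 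The finitely many non-bipartite orders below $53$ would then be disposed of by exhaustively generating all $4$-regular vertex-transitive Ramanujan graphs on at most $52$ vertices with \geng{} and \nauty{} and testing redundant rigidity through a rank computation in the generic rigidity matroid in \mathematica{}.

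The hardest part will be making the degree-$4$ estimate uniform and explicit enough both to yield a clean threshold and to guarantee that no non-bipartite exceptions hide in the finite range: the thin counting margin means the Ramanujan bound must be used essentially optimally, and the bipartite refinement must be engineered so that its improved constant genuinely eliminates the size hypothesis rather than merely lowering it.
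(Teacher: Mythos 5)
Your reduction to ``3-connected plus redundantly rigid'' is fine as far as it goes (Mader--Watkins does give vertex connectivity at least $\lceil 2(k+1)/3\rceil \geq 4$ here), but the engine you propose for redundant rigidity --- expander mixing fed into Lov\'asz--Yemini cover counting --- cannot work for $k\in\{4,5\}$, and this is a genuine gap, not a matter of optimizing constants. The obstructions that actually occur are covers by disjoint copies of $K_k$: by Theorem \ref{t:jjs07}, a non-globally-rigid vertex-transitive $k$-regular graph (beyond trivial size) contains $K_k$, hence by transitivity is tiled by disjoint $K_k$'s, and the cover consisting of these $n/k$ cliques together with the $n/2$ inter-clique edges (taken as pairs) has value $(2k-3)\frac{n}{k} + \frac{n}{2}$, which drops below $2n-3$ once $n>12$ (for $k=4$) or $n>30$ (for $k=5$); such graphs are not even rigid. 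The mixing lemma is powerless against this: by your own estimate $|E(X)|\lesssim \frac{k|X|^2}{2n} + \sqrt{k-1}\,|X|$, a set of $|X|=k$ vertices is allowed about $4\sqrt{3}\approx 6.93>6$ edges when $k=4$ and at least $10$ edges when $k=5$, so no version of the estimate excludes a $K_k$, for any $n$. Worse, the paper exhibits $4$-regular vertex-transitive \emph{Ramanujan} graphs on up to $40$ vertices that are not rigid (Figure \ref{fig:nonrig-vramanujan}); every connectivity- or mixing-type quantity you invoke looks essentially the same for these graphs as for graphs on $53$ or more vertices, so no such local argument can separate $n=40$ from $n\geq 53$. (This is the same barrier that forces $\delta\geq 6$ and $\mu_2>2+2/(\delta-1)$ in Theorem \ref{specrigid}, and there is no analogue of Theorem \ref{thm:JJ} at edge-connectivity $4$ or $5$.)

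The missing idea, and the paper's actual route, is global rather than local: after Theorem \ref{t:jjs07} reduces everything to clique-tiled graphs, contract each $K_k$ to a vertex, obtaining a $k$-regular multigraph $H$ with $\mu_2(G)\leq \mu_2(H)/k$ (Lemma \ref{l:cliqreplace}, a one-line test-vector computation); then the Alon--Boppana-type bound of Nilli (Theorem \ref{t:nilli}) applied to $H$ caps $\mu_2(G)$ by roughly $1-\frac{2\sqrt{k-1}}{k}$ plus a diameter-dependent error, contradicting the Ramanujan lower bound $\mu_2(G)\geq k-2\sqrt{k-1}$. In particular, the threshold $53$ is not a mixing-lemma constant: it is the Moore bound (Theorem \ref{t:moore}), which says a $4$-regular graph of diameter at most $3$ has at most $52$ vertices, so $n\geq 53$ forces diameter at least $4$ and makes Nilli's error term small enough; the exceptional graphs of Figures \ref{fig:VertexTransitiveSpecialCase} and \ref{fig:secondSpecialCase} fall out structurally as the cases where $H$ has diameter $1$, not from a computer search. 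Your bipartite step is also upside down: a bipartite graph is triangle-free, hence contains no $K_4$, and Theorem \ref{t:jjs07} gives global rigidity immediately --- no Haemers-type refinement is needed. Finally, your fallback of exhaustively testing all $4$-regular vertex-transitive Ramanujan graphs on at most $52$ vertices is neither needed for the statement as given nor currently feasible: the paper's census stops at $47$ vertices and explicitly leaves the $48$- and $52$-vertex cases open.
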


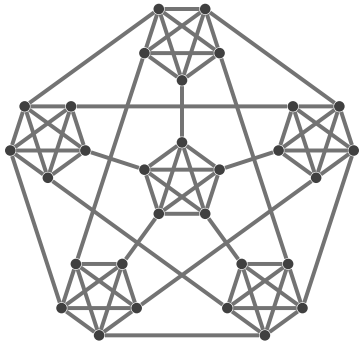
\begin{figure}[ht]
    \centering
    \begin{tikzpicture}[scale=0.75]
        \coordinate (o) at (0,0);
        \node[vertex] (01) at (18:0.7) {};
        \node[vertex] (02) at (90:0.7) {};
        \node[vertex] (03) at (162:0.7) {};
        \node[vertex] (04) at (234:0.7) {};
        \node[vertex] (05) at (306:0.7) {};
        \draw[edge] (01)edge(02) (01)edge(03) (01)edge(04) (01)edge(05) (02)edge(03) (02)edge(04) (02)edge(05) (03)edge(04) (03)edge(05) (04)edge(05);
        \foreach \x in {1,2,3,4,5}
        {
            \begin{scope}[rotate around={\x*72+18:(o)}]
                \begin{scope}[xshift=2.5cm]
                    \node[vertex] (\x1) at (0:-0.7) {};
                    \node[vertex] (\x2) at (72:-0.7) {};
                    \node[vertex] (\x3) at (144:-0.7) {};
                    \node[vertex] (\x4) at (-144:-0.7) {};
                    \node[vertex] (\x5) at (-72:-0.7) {};
                \end{scope}
                \draw[edge] (\x1)edge(\x2) (\x1)edge(\x3) (\x1)edge(\x4) (\x1)edge(\x5) (\x2)edge(\x3) (\x2)edge(\x4) (\x2)edge(\x5) (\x3)edge(\x4) (\x3)edge(\x5) (\x4)edge(\x5);
            \end{scope}
        }
        \draw[edge] (14)edge(23) (24)edge(33) (34)edge(43) (44)edge(53) (54)edge(13);
        \draw[edge] (01)edge(51) (02)edge(11) (03)edge(21) (04)edge(31) (05)edge(41);
        \draw[edge] (12)edge(45) (22)edge(55) (32)edge(15) (42)edge(25) (52)edge(35);
    \end{tikzpicture}
    \caption{The single special case to Theorem \ref{thm:transitive}. 
    The graph is not globally rigid (see Theorem~\ref{t:jjs07}),
    however it is rigid.
    To see that the graph is indeed rigid, note that if we delete a path of length 3 from each copy of $K_5$ contained in the graph,
    we obtain a $(2,3)$-tight graph.}
    \label{fig:VertexTransitiveSpecialCase}
\end{figure}

A specific class of $(p+1)$-regular Ramanujan graphs, denoted by $X^{p,q}$ for primes $p, q$ such that $p\equiv q \equiv 1 \pmod 4$, was constructed by Lubotzky, Phillips and Sarnak \cite{LPS88}. %We refer to \cite{DVS,LPS88} for a complete description of these graphs. 
Generalizing this family of graphs, Morgenstern~\cite{morg92} constructed $(p+1)$-regular Ramanujan graphs for all prime powers $p$. Servatius~\cite{Serv} asked whether the Ramanujan graphs $X^{5,q}$ are rigid in $\mathbb{R}^2$. Since every graph $X^{5,q}$ is $6$-regular and vertex-transitive, the theorem of Jackson, Servatius and Servatius~\cite{JSS07} (see Theorem~\ref{t:jjs07} in this paper) actually implies an affirmative answer to this open question; that is, the Ramanujan graphs $X^{5,q}$ are globally rigid in $\mathbb{R}^2$. Their theorem also implies the global rigidity of Morgenstern's Ramanujan graphs for $p\ge 5$, however, not for $p=3,4$. Theorem~\ref{thm:transitive} fills the gap and implies that the Ramanujan graphs constructed by Morgenstern are globally rigid in $\mathbb{R}^2$ for $p=3,4$.

In Section~\ref{sec: proofs}, we present the proofs of the main theorems on global rigidity of Ramanujan graphs in $\mathbb{R}^2$. In Section~\ref{sec: connandst}, we list useful results on edge connectivity and edge-disjoint spanning trees, as well as some properties of Ramanujan graphs, which will be used for other types of framework rigidity in Section~\ref{sec: others}. These include body-bar (global) rigidity, body-hinge (global) rigidity, and rigidity on surfaces of revolution. In Section~\ref{sec:comp}, we use computational methods to improve some previous results in the paper.
Finally, we conclude the paper with some open questions in Section~\ref{sec:end}.

\section{Proofs of Theorems~\ref{thm:ramanujan} and \ref{thm:transitive}}
\label{sec: proofs}

\subsection{General case}

We use the following theorem in our proofs.
\begin{theorem}[Jackson and Jord\'an~\cite{JaJo09}]\label{thm:JJ}
A simple graph $G$ is globally rigid in $\mathbb{R}^2$ if $G$ is $6$-edge-connected, $G-u$ is $4$-edge-connected for every vertex $u$ and $G-\{v, w\}$ is $2$-edge-connected for any vertices $v,w\in V(G)$.
\end{theorem}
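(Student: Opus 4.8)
The plan is to deduce Theorem~\ref{thm:JJ} from the combinatorial characterization of global rigidity in the plane recalled above: since $G$ is $6$-edge-connected it has minimum degree at least $6$ and hence $n\ge 7$, so $G$ is neither $K_2$ nor $K_3$, and by \cite{Conn05,JaJo05} it suffices to prove that $G$ is (a) $3$-connected and (b) redundantly rigid in $\mathbb{R}^2$. First I would dispose of (a), which is routine: the assumption that $G-\{v,w\}$ is $2$-edge-connected for all $v,w$ means in particular that $G-\{v,w\}$ is connected, so $G$ has no vertex cut of size $2$; likewise $G-u$ being $4$-edge-connected (hence connected, as $n-1\ge 6$) rules out cut vertices, and $6$-edge-connectedness gives connectivity and $n\ge 7\ge 4$. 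Hence $G$ is $3$-connected.

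The substance is (b), for which I would in fact aim at the stronger conclusion that $G-F$ is rigid for every edge set $F$ with $|F|\le 3$; the case $|F|=1$ gives redundant rigidity, while the full strength also feeds global rigidity directly through the Lov\'asz--Yemini route quoted above (cf.\ \cite[Theorem 7.2]{JaJo05}). To prove this I would use the Lov\'asz--Yemini rank formula \cite{LoYe82} for the generic $2$-dimensional rigidity matroid $\mathcal{R}_2$, namely that the rank of an edge set equals the minimum of $\sum_i(2|V_i|-3)$ over all covers of the edges by induced subgraphs on vertex sets $V_i$ with $|V_i|\ge 2$. Assuming for contradiction that $G-F$ is not rigid, its rank is at most $2n-4$, so there is a cover $\{V_1,\dots,V_t\}$ witnessing $\sum_i(2|V_i|-3)\le 2n-4$. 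After the standard normalisation (merging and shrinking blocks so that the $V_i$ behave like a near-partition of $V$ with controlled overlaps), I would count the edges of $G$ running between different blocks: the cover inequality forces this number to be small, whereas the three connectivity hypotheses force it to be large, yielding the contradiction. Concretely, a pair of blocks sharing no vertex produces a genuine edge cut (bounded below by $6$ minus the deleted edges of $F$), a shared single vertex turns the separation into one of $G-u$ and invokes $4$-edge-connectivity, and a shared pair of vertices invokes the $2$-edge-connectivity of $G-\{v,w\}$. An alternative, and perhaps more transparent, engine for the same step is the packing bridge of Recski--Lov\'asz--Yemini together with Nash--Williams' theorem: a spanning subgraph is minimally rigid if and only if adding any edge in parallel produces two edge-disjoint spanning trees, and edge-connectivity controls such packings; this is the natural point of contact with the partition results of \cite{Gu18}.

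The hard part will be exactly this counting and normalisation in step (b). Covers can share vertices in complicated ways, and the up-to-three deleted edges of $F$ interact with the block structure, so one must carefully convert the vertex overlaps of the cover into the vertex deletions appearing in the hypotheses, and treat degenerate blocks (those on two vertices, or consisting only of edges of $F$) separately. This bookkeeping---passing between the \emph{mixed} edge-connectivity conditions on $G$, $G-u$ and $G-\{v,w\}$ and a single lower bound on inter-block edges---is the technical heart of the argument, and it is precisely why the hypotheses are stated as a trio of conditions rather than as a single uniform connectivity bound.
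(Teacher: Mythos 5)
You should first note a contextual point: the paper does not prove this statement at all. It is quoted as Theorem~\ref{thm:JJ} from Jackson and Jord\'an~\cite{JaJo09} and used purely as a black box in the proof of Theorem~\ref{thm:ramanujan}. So the only meaningful comparison is with the original Jackson--Jord\'an argument. Your reduction is the standard and correct opening: by \cite{Conn05,JaJo05} it suffices to show that $G$ is $3$-connected and redundantly rigid, and your derivation of $3$-connectivity from the three hypotheses is sound ($G-\{v,w\}$ being $2$-edge-connected is in particular connected, so there is no vertex cut of size at most $2$, and $n\ge 7$ since the minimum degree is at least $6$). This is indeed the general shape of the known proof.

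The genuine gap is that everything which makes the theorem a theorem is deferred. You reduce redundant rigidity to a counting argument over a deficient Lov\'asz--Yemini cover and then assert that ``the cover inequality forces this number to be small, whereas the three connectivity hypotheses force it to be large'' --- but that conversion is precisely the content of the Jackson--Jord\'an proof, and you concede yourself that the normalisation and bookkeeping is ``the technical heart.'' Concretely, the unexecuted difficulties are: blocks of a cover may pairwise overlap in one or two vertices in complicated patterns, so a deficient cover does not immediately yield a single clean edge cut of $G$, of $G-u$, or of $G-\{v,w\}$; blocks with $|V_i|=2$ contribute value $1$ and must be treated separately; and the deleted edges of $F$ may lie inside or between blocks, perturbing the count by an amount comparable to the slack available. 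In addition, your proposed strengthening --- that $G-F$ is rigid for every $|F|\le 3$ --- overshoots: that conclusion is what Lov\'asz and Yemini~\cite{LoYe82} prove under $6$-\emph{vertex}-connectivity, it is not obviously true under the weaker mixed edge-connectivity hypotheses here, and it is unnecessary, since the characterization of \cite{Conn05,JaJo05} needs only the case $|F|=1$; aiming at $|F|\le 3$ makes the unproved counting step strictly harder than it needs to be. As it stands, the proposal is a plausible route map along the right road, but the decisive lemma is missing, so it does not constitute a proof.
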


\begin{proof}[Proof of Theorem~\ref{thm:ramanujan}]
For the first part, we prove that if $G$ is a $7$-regular Ramanujan graph with $n\geq 22$ vertices, then $G$ is $6$-edge-connected, $G-\{u\}$ is $4$-edge-connected for any vertex $u$ and $G-\{v,w\}$ is $2$-edge-connected for any vertices $v$ and $w$. Then the result follows by Theorem~\ref{thm:JJ}. %the theorem of Jackson and Jord\'{a}n \cite{JaJo09} mentioned earlier in the paper.

Because $7-\lambda_2\geq 7-2\sqrt{6}>2.1$, $G$ is $7$-edge-connected (see Theorem~\ref{thm:ecn} or \cite[Theorem 1.3]{Cio10} or \cite[Theorem 4.3]{KS06}). We show that for any subset of vertices $S$ with $2\leq |S|\leq n/2$, $e(S,V\setminus S)\geq 11$. If $2\leq |S|\leq 6$, then $e(S,V\setminus S)\geq |S|(7-|S|+1)\geq 12$. If $7\leq |S|\leq n/2$, then we use the spectral bound $e(S,V\setminus S)\geq \frac{(7-\lambda_2)|S||V\setminus S|}{n}$ (see \cite[Lemma 1.2]{Cio10}) and obtain that $e(S,V\setminus S)\geq \frac{(7-2\sqrt{6})7(n-7)}{n}>10$ for $n>\frac{49(7-2\sqrt{6})}{39-14\sqrt{6}}\approx 21.87$. Hence, $e(S,V\setminus S)\geq 11$ when $n\geq 22$ as claimed.
Because $G$ is $7$-regular, this implies that for any vertex $u$ of $G$, the graph $G-\{u\}$ is $4$-edge-connected.

We now prove that $G$ is $4$-connected. This implies that $G-\{v, w\}$ is $2$-connected and, therefore, $2$-edge-connected. From Fiedler \cite{Fiedler}, we know that the vertex-connectivity of $G$ is at least $7-\lambda_2\geq 7-2\sqrt{6}>2$, implying that $G$ is $3$-connected.
By contradiction, assume that $G$ has a subset $T$ of three vertices such that $G-T$ is disconnected. Let $A\cup B=V(G)\setminus T$ be a partition of $V(G)\setminus T$ such that there are no edges between $A$ and $B$. Denote $a=|A|$ and $b=|B|$. The Expander Mixing Lemma (see \cite{AC} or \cite[Theorem 2.11]{KS06}) implies that $7ab/n\leq 2\sqrt{6}\sqrt{ab(1-a/n)(1-b/n)}$. A straightforward calculation gives that $n\geq 25ab/72$. Because the neighborhood of $A$ (or $B$) is $T$, it follows that $a,b\geq 5$. Since $a+b=n-3$, we get that $n\geq 25\cdot 5(n-8)/72$ which implies that $n\leq 1000/53<20$, contradiction. Hence, the vertex-connectivity of $G$ is at least $4$ as claimed. This finishes our proof of the first part.

We use a similar strategy for the second part and show that if $G$ is a $6$-regular Ramanujan graph with $n\geq 329$ vertices, then $G$ is $6$-edge-connected, $G-\{u\}$ is $4$-edge-connected and $G-\{v,w\}$ is $2$-edge-connected for any vertices $v$ and $w$. We first show that for any subset of vertices $S$ with $2\leq |S|\leq n/2$, $e(S,V\setminus S)\geq 10$. If $2\leq |S|\leq 5$, it is easy to see that $e(S,V\setminus S)\geq |S|(6-|S|+1)\geq 10$. If $6\leq |S|\leq n/2$, then we use again the spectral bound $e(S,V\setminus S)\geq \frac{(6-\lambda_2)|S|(n-|S|)}{n}$ and get that $e(S,V\setminus S)\geq  \frac{(6-2\sqrt{5})6(n-6)}{n}>9$ for $n>\frac{72-24\sqrt{5}}{9-4\sqrt{5}}\approx 328.99$. Hence, $e(S,V\setminus S)\geq 10$ when $n\geq 329$ as claimed. This means that $G$ is $6$-edge-connected and $G-\{u\}$ is $4$-edge-connected for any vertex $u$ of $G$.

We now prove that $G$ is $4$-connected, which will in turn imply that $G-\{v,w\}$ is $2$-connected and, therefore, $2$-edge-connected.
From Fiedler \cite{Fiedler}, the vertex-connectivity of $G$ is at least $6-\lambda_2\geq 6-2\sqrt{5}>1.52$, i.e.~$G$ is 2-connected.
By contradiction, assume that $G$ has a subset $T$ of two or three vertices such that $G-T$ is disconnected. Let $A\cup B=V(G)\setminus T$ be a partition of $V(G)\setminus T$ such that there are no edges between $A$ and $B$. Denote $a=|A|$ and $b=|B|$. Using again the Expander Mixing Lemma (see \cite{AC} or \cite[Theorem 2.11]{KS06}), we obtain that $6ab/n\leq \lambda \sqrt{ab(1-a/n)(1-b/n)}\leq 2\sqrt{5}  \sqrt{ab(1-a/n)(1-b/n)}$. It follows that $n\geq 4ab/15$. Because the neighborhood of $A$ (or $B$) is $T$, we get that $a,b\geq 4$, and hence the minimum possible value of $ab$ is $4(n-7)$. Therefore, $n\geq 4ab/15\geq 4\cdot 4(n-7)/15$ which implies that $n\leq 112$, contradiction. This finishes the proof of the second part.
\end{proof}

\subsection{Vertex-transitive case}

The global rigidity of vertex-transitive graphs was previously characterized by Jackson, Servatius and Servatius~\cite{JSS07} in the following theorem.

\begin{theorem}[Jackson, Servatius and Servatius~\cite{JSS07}]\label{t:jjs07}
Let $G=(V,E)$ be a connected vertex-transitive graph of degree $k \geq 2$. Then $G$ is globally rigid in $\mathbb{R}^2$ if and only if one of the following holds.
\begin{enumerate}[topsep=1pt,itemsep=0pt,leftmargin=9mm]
    \item $k=2$ and $|V| \leq 3$.
    \item $k=3$ and $|V| \leq 4$.
    \item $k=4$, and either the maximal clique size is at most 3 or $|V| \leq 11$.
    \item $k=5$, and either the maximal clique size is at most 4 or $|V| \leq 28$.
    \item $k \geq 6$.
\end{enumerate}
\end{theorem}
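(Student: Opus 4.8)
My plan is to derive the characterization from the combinatorial description of planar global rigidity recalled in the introduction: for $|V|\ge 4$, a graph is globally rigid in $\mathbb{R}^2$ if and only if it is $3$-connected and redundantly rigid, and for $|V|\le 3$ the globally rigid graphs are exactly the complete ones. I would first record two standard properties of a connected vertex-transitive graph $G$ of degree $k$: it is maximally edge-connected, so $\lambda(G)=k$, and its vertex-connectivity obeys $\kappa(G)\ge\tfrac23(k+1)$ (Mader--Watkins). Thus $\kappa(G)\ge 4>3$ whenever $k\ge 4$, so $3$-connectivity is never the obstruction in that range and the whole problem reduces to deciding redundant rigidity. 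The second ingredient is a structural consequence of transitivity that governs the clique conditions: in a $k$-regular graph two cliques of size $k$ cannot share a vertex (its degree would exceed $k$), so if $G$ contains a $K_k$ then transitivity places every vertex in exactly one such clique; hence $G$ is a disjoint union of $m=n/k$ copies of $K_k$ linked by exactly $n/2$ crossing edges, and the only larger clique possible is $K_{k+1}=G$.

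For the ``only if'' direction I would bound the rank of the generic $2$-dimensional rigidity matroid by the standard cover inequality $r(G)\le\sum_i\bigl(2|X_i|-3\bigr)+|F|$, where $\{X_i\}$ partitions $V(G)$ and $F$ is the set of crossing edges (a consequence of subadditivity of matroid rank and the Lov\'asz--Yemini formula). In the clique cases I take the $X_i$ to be the clique blocks, for which $r(G[X_i])=2|X_i|-3$ exactly, and delete a crossing edge $e$ to get
\[
r(G-e)\ \le\ m(2k-3)+\tfrac{n}{2}-1\ =\ 2n-\tfrac{3n}{k}+\tfrac{n}{2}-1 .
\]
This lies below $2n-3$ precisely when $\tfrac{n}{2}-\tfrac{3n}{k}+2<0$, i.e.\ when $n>8$ for $k=4$ and $n>20$ for $k=5$; since clique orders are multiples of $k$, this makes $G-e$ non-rigid (so $G$ not redundantly rigid, hence by Hendrickson not globally rigid) exactly for $k=4$ with a $K_4$ and $|V|\ge 12$, and for $k=5$ with a $K_5$ and $|V|\ge 30$. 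This matches the complementary bounds $|V|\le 11$ and $|V|\le 28$, the realizable clique orders being multiples of $k$ so that the largest globally rigid clique graphs have $|V|=8$ and $|V|=20$. The low-degree necessity is immediate from counting: for $k\le 3$ and $|V|$ large the inequality $kn/2-1<2n-3$ already prevents $G-e$ from being rigid, and $k=2$ gives a cycle, leaving only $K_3$ and $K_4$.

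For the ``if'' direction the same count, read in reverse, isolates the surplus $\tfrac{n}{2}-\bigl(\tfrac{3n}{k}-3\bigr)=n\tfrac{k-6}{2k}+3$ of crossing edges over the minimum needed to bind the clique blocks. When $k\ge 6$ this surplus is at least $3$ for every $n$, so no crossing edge can be critical; together with $\kappa(G)\ge\tfrac23(k+1)$ this gives redundant rigidity and hence global rigidity (for $k\ge 7$ one gets $\kappa(G)\ge 6$ and may quote the Lov\'asz--Yemini theorem directly, while $k=6$ is pushed through the edge-connectivity hypotheses of Theorem~\ref{thm:JJ}). For $k=4$ with maximal clique $\le 3$ and $k=5$ with maximal clique $\le 4$ the clique decomposition is unavailable, and the heart of the argument is to show that the absence of a $K_k$ forces redundant rigidity; I would argue the contrapositive, proving that any critical edge in a vertex-transitive $k$-regular graph (for these degrees) forces the rank-deficient block structure above, which can only be a decomposition into $K_k$'s. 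The finitely many remaining cases --- graphs containing a $K_k$ with $|V|$ below threshold (only $n\le 8$ for $k=4$ and $n\le 20$ for $k=5$), together with the small complete or near-complete graphs for $k\le 3$ --- are settled by direct verification, which may be phrased uniformly through generic global rigidity of the body-bar framework built from the globally rigid clique blocks.

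The principal obstacle, I expect, is the sufficiency for the two small-clique families $k=4$ (clique $\le 3$) and $k=5$ (clique $\le 4$): here there is no connectivity surplus to invoke, so redundant rigidity must be established essentially from scratch, and the genuine content is a structural lemma asserting that for these degrees a critical edge of a vertex-transitive graph can arise only from a full clique decomposition. A secondary difficulty is the boundary degree $k=6$, where $\kappa(G)\ge\tfrac23(k+1)$ yields only $5$-connectivity rather than the $6$-connectivity that would make global rigidity immediate; the crude vertex-deletion estimate shows $G-u$ is only $3$-edge-connected, so securing the $4$-edge-connectivity demanded by Theorem~\ref{thm:JJ} requires a sharper analysis of minimum edge cuts in vertex-transitive graphs that rules out the near-extremal separating sets.
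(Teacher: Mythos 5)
This statement is imported by the paper from Jackson, Servatius and Servatius \cite{JSS07} with a citation and no proof, so there is no internal argument to compare against; your attempt has to stand on its own. Its easy half does: the observation that a $K_k$ forces (by transitivity) a partition into disjoint $K_k$-blocks joined by a perfect matching of $n/2$ crossing edges, the cover bound $r(G-e)\le m(2k-3)+\tfrac n2-1$, and the resulting thresholds $n>8$ for $k=4$ and $n>20$ for $k=5$ (sharpened by divisibility to $n\ge12$ and $n\ge30$) correctly explain where the numbers $11$ and $28$ come from, and the $k\ge7$ sufficiency via $\kappa(G)\ge\tfrac23(k+1)\ge6$ plus Lov\'asz--Yemini is sound. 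One local flaw: your justification that two $k$-cliques cannot share a vertex (``its degree would exceed $k$'') fails in a merely $k$-regular graph, where two $k$-cliques can share $k-1$ vertices with union inducing $K_{k+1}$ minus an edge; vertex-transitivity is needed to exclude this (a shared vertex's neighborhood induces $K_k$ minus an edge, while a private vertex's neighborhood induces $K_{k-1}$ plus an isolated vertex, so the two vertices cannot lie in the same orbit). The paper's own Lemma~\ref{l:cliqreplace} uses the same one-line degree justification, which is likewise only safe in the vertex-transitive setting where it is applied.

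The genuine gap is that every hard part of the theorem is announced rather than proved. First, sufficiency for $k=4$ with no $K_4$ and $k=5$ with no $K_5$ --- the actual mathematical content of \cite{JSS07}, occupying most of that paper --- is compressed into the claim that ``any critical edge forces the rank-deficient block structure, which can only be a decomposition into $K_k$'s.'' Nothing in your toolkit delivers this: the cover inequality only bounds rank from above and so can never certify redundant rigidity, and showing that a non-redundantly-rigid vertex-transitive graph of these degrees must admit a Lov\'asz--Yemini-type cover consisting of $K_k$-blocks is a delicate analysis of critical sets in the rigidity matroid, not a formal reversal of the count. Second, your $k=6$ route through Theorem~\ref{thm:JJ} is incomplete by your own admission: you never establish that $G-u$ is $4$-edge-connected and $G-\{v,w\}$ is $2$-edge-connected for $6$-regular vertex-transitive $G$, and these do not follow from $\lambda(G)=6$ and $\kappa(G)\ge5$ (note also that the mixed-connectivity theorem postdates \cite{JSS07}, whose argument for $k\ge6$ is different). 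Third, the proposed uniform treatment of the small clique cases ($k=4$, $n=8$; $k=5$, $n\in\{10,20\}$) via ``global rigidity of the body-bar framework built from the clique blocks'' is not a valid method: body-bar global rigidity neither implies nor is implied by bar-joint global rigidity of the same graph, and no such transfer principle exists; these finitely many graphs must simply be verified to be $3$-connected and redundantly rigid directly. As written, the proposal establishes only the necessity direction and the $k\ge7$ case.
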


It follows from Theorem \ref{t:jjs07} that any vertex-transitive bipartite graph of degree $k \geq 4$ is globally rigid in $\mathbb{R}^2$. It is also easy to construct connected $k$-regular vertex-transitive non-bipartite graphs for $k \in \{4,5\}$, that are not globally rigid in $\mathbb{R}^2$. To do so, take any $k$-regular graph, replace every vertex with a copy of $K_k$ and then share the edges out evenly amongst the new cliques. Any such graph will, however, have a relatively low algebraic connectivity.

\begin{lemma}\label{l:cliqreplace}
Let $k \geq 3$ and $G$ be a connected $k$-regular graph where every vertex is contained in a clique of size $k$. If $H$ is the multigraph formed from contracting every clique of size $k$ to a point, then $H$ is a well-defined $k$-regular multigraph with $\mu_2(G) \leq \mu_2(H)/k$.
\end{lemma}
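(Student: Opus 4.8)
The plan is to split the statement into its two assertions: that the contracted object $H$ is a well-defined $k$-regular multigraph, and that $\mu_2(G) \le \mu_2(H)/k$. For the first part I would start from the local structure at a vertex. Fix $v$ and a $k$-clique $C$ containing it; since $\deg_G v = k$ and $v$ is joined to the $k-1$ other vertices of $C$, exactly one neighbour of $v$ lies outside $C$, so $v$ has a single \emph{external} edge. If the $k$-cliques are pairwise vertex-disjoint, they partition $V(G)$ into $m = n/k$ blocks $B_1,\dots,B_m$; each clique then emits exactly $k$ external edges (one per vertex), so contracting each clique to a point produces a multigraph $H$ on $m$ vertices in which every vertex has degree $k$, i.e.\ $H$ is $k$-regular.

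The delicate point, and the step I expect to be the main obstacle, is showing that distinct $k$-cliques are vertex-disjoint, since this is exactly what makes the contraction unambiguous. A degree count handles the first half: if two distinct $k$-cliques $C,C'$ share a vertex $w$, then $w$ is adjacent to every vertex of $(C\cup C')\setminus\{w\}$, so $2k-|C\cap C'|-1 \le \deg_G w = k$, forcing $|C\cap C'| = k-1$. The remaining work is to exclude this near-overlap configuration using the hypotheses, so that a genuine partition is forced. One must be careful here: a $k$-regular graph can locally look like $K_{k+1}$ with one edge deleted, in which two $k$-cliques share $k-1$ vertices, so this case has to be ruled out rather than dismissed, and this is where the real content of the well-definedness claim sits.

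Granting the partition $V(G) = B_1 \sqcup \dots \sqcup B_m$ with $H$ the quotient, I would prove the eigenvalue bound by a single test vector. Let $x \in \mathbb{R}^m$ be a Fiedler vector of $H$, so that $x \perp \mathbf{1}$ and $x^{\top} L(H) x = \mu_2(H)\, x^{\top} x$, and define $y \in \mathbb{R}^n$ to be constant on each block by $y_u = x_j$ for $u \in B_j$. Then $\sum_u y_u = k\sum_j x_j = 0$, so $y \perp \mathbf{1}$, and $y^{\top} y = k\, x^{\top} x$. In the edge sum $y^{\top} L(G) y = \sum_{uv \in E(G)} (y_u - y_v)^2$, every edge inside a block contributes $0$, while the edges between $B_i$ and $B_j$ each contribute $(x_i - x_j)^2$ and occur with multiplicity equal to that of the edge $ij$ in $H$; hence $y^{\top} L(G) y = x^{\top} L(H) x = \mu_2(H)\, x^{\top} x$.

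By the Courant--Fischer characterisation of the algebraic connectivity applied to the nonzero vector $y \perp \mathbf{1}$, this yields $\mu_2(G) \le y^{\top} L(G) y / y^{\top} y = \mu_2(H)/k$, which is the desired inequality. Thus the eigenvalue estimate is essentially immediate once the block structure is available, and the entire weight of the lemma rests on the combinatorial disjointness step flagged above; I would therefore invest the bulk of the proof in pinning down that the $k$-cliques partition $V(G)$.
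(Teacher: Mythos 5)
Your Rayleigh--quotient argument is complete, correct, and is exactly the paper's proof: the paper also lifts a unit Fiedler vector of $H$ to a vector constant on each contracted clique, computes that the quadratic forms agree and that the norm scales by $k$, and concludes by Courant--Fischer. The gap in your proposal is precisely the step you flagged and deferred: you never rule out two distinct $k$-cliques sharing $k-1$ vertices, so the partition into blocks --- on which everything else rests --- is not established, and the proposal is not a proof.

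Your instinct that this is ``where the real content sits'' is, however, vindicated in a stronger sense than you intended: under the lemma's stated hypotheses that step \emph{cannot} be carried out, because the configuration genuinely occurs. Take two disjoint copies of $K_{k+1}$ minus an edge, with vertex sets $S_i \cup \{u_i,u_i'\}$ where $|S_i|=k-1$ and $u_i \not\sim u_i'$ ($i=1,2$), and add the edges $u_1u_2$ and $u_1'u_2'$. The result is connected, $k$-regular, and every vertex lies in a $k$-clique; yet $S_1\cup\{u_1\}$ and $S_1\cup\{u_1'\}$ are distinct $k$-cliques sharing $k-1$ vertices, and contracting all $k$-cliques collapses each copy to a single point, yielding a doubled edge on two vertices --- a $2$-regular multigraph, not a $k$-regular one. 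So the lemma as stated is false, and no completion of your plan exists without an extra hypothesis. Notably, the paper's own proof disposes of well-definedness with the one-line assertion that no two $k$-cliques share a vertex ``by our degree bound'' --- exactly the argument your degree count shows to be insufficient, since it only forces $|C\cap C'| = k-1$ rather than $|C \cap C'| = 0$. The lemma survives where it is actually used (Lemma~\ref{l:vtspec}), because there $G$ is vertex-transitive and not complete, and that does exclude the overlap: if $C=S\cup\{u\}$ and $C'=S\cup\{u'\}$ overlap with $u\not\sim u'$, then every vertex of $S$ has all its neighbours inside $S\cup\{u,u'\}$, so the neighbourhood of a vertex of $S$ spans $\binom{k-2}{2}+2(k-2)$ edges while the neighbourhood of $u$ spans only $\binom{k-1}{2}$, and these differ by $k-2>0$, contradicting vertex-transitivity (while $u\sim u'$ forces $G=K_{k+1}$, which is complete). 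So the honest fix is to add vertex-transitivity (or outright clique-disjointness) to the hypotheses; as stated, the step you identified as the crux is not merely hard but impossible.
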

\begin{proof}
Let $C_1,\ldots,C_n$ be the cliques of size $k$ of $G$ (and hence also the vertices of $H$). We note that no two cliques of size $k$ share a vertex by our degree bound, hence $H$ is well-defined. Let $x \in \mathbb{R}^{V(H)}$ be a unit eigenvector of $\mu_2(H)$, and define $\tilde{x}^{V(G)}$ to be the vector where for each $v \in C_i$, we set $\tilde{x}(v) = x(C_i)$. We immediately compute that $\tilde{x}^T \tilde{x} = k$ and
\begin{align*}
	\tilde{x}^T L(G) \tilde{x} = x^T L(H) x = \mu_2(H).
\end{align*}
The result now follows as
\begin{align*}
	\mu_2(G) = \min_{u \in [1 ~ \ldots ~ 1]^\perp} \frac{u^T L(G) u}{u^T u} \leq \frac{\tilde{x}^T L(G) \tilde{x}}{\tilde{x}^T \tilde{x}} = \frac{\mu_2(H)}{k}.
\end{align*}	
\end{proof}

To prove Theorem \ref{thm:transitive}, we also require the following three technical results.

\begin{theorem}[Nilli~\cite{nilli}]\label{t:nilli}
Let $G$ be a $k$-regular (multi)graph with diameter $m>1$. Then
	\begin{align*}
		\mu_2(G) \leq k - 2 \sqrt{k-1} + \frac{2 \sqrt{k-1} -1}{\left\lfloor m/2\right\rfloor}.
	\end{align*}
\end{theorem}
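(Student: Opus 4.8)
The plan is to recognize this as an Alon--Boppana-type lower bound on the second eigenvalue and to prove it by a test-function (Rayleigh quotient) argument. Since $G$ is $k$-regular, $L(G)=kI-A(G)$ and hence $\mu_2(G)=k-\lambda_2(G)$, so the stated inequality is equivalent to the lower bound $\lambda_2(G)\ge 2\sqrt{k-1}-\tfrac{2\sqrt{k-1}-1}{\lfloor m/2\rfloor}$. By the variational characterization, $\mu_2(G)=\min_{f\perp\mathbf 1}\frac{f^{T}L(G)f}{f^{T}f}$, where $f^{T}L(G)f=\sum_{uv\in E(G)}(f(u)-f(v))^{2}$, so it suffices to build a single vector $f\perp\mathbf 1$ whose normalized Dirichlet energy is at most the claimed right-hand side.

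First I would use the diameter to create room. Choosing vertices $x,y$ with $d(x,y)=m$ and setting $s=\lfloor m/2\rfloor$, the balls $B_s(x)$ and $B_s(y)$ are vertex-disjoint and have no edge joining them. On each ball I would place a bump function that decays geometrically across breadth-first-search layers, with a linear damping factor: writing $V_i$ for the vertices at distance $i$ from $x$, set $f|_{V_i}=(s+1-i)(k-1)^{-i/2}$ for $0\le i\le s$ and $f=0$ elsewhere (so $f$ already vanishes on $V_{s+1}$), and define $g$ analogously around $y$. Taking the combination $h=\alpha f-\beta g$ with $\alpha,\beta$ chosen so that $h\perp\mathbf 1$, the disjoint and non-adjacent supports make all cross terms vanish, giving $\frac{h^{T}L(G)h}{h^{T}h}\le\max\{R(f),R(g)\}$, where $R(\cdot)$ denotes the normalized Dirichlet energy; by symmetry it then suffices to bound $R(f)$.

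For $R(f)$, only edges between consecutive layers (and edges leaving to $V_{s+1}$) contribute, since within-layer differences vanish; bounding the number of such edges by $d_i\le(k-1)n_i$ (each vertex at distance $i\ge 1$ has a neighbour at distance $i-1$) and writing $q=(k-1)^{-1/2}$, the leading term collapses, via the identity $(k-1)(1-q)^{2}=k-2\sqrt{k-1}$, to exactly $k-2\sqrt{k-1}$. The constant $2\sqrt{k-1}-1$ of the theorem then emerges as $(k-1)\bigl(1-(1-q)^{2}\bigr)$ in the error term, and the linear damping factor is what makes the argument work: it forces $f^{T}f\ge n_0(s+1)^{2}=(s+1)^{2}$ regardless of how the layers grow, so the excess is divided by a quantity of size $\asymp\lfloor m/2\rfloor^{2}$. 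The hard part, and where I expect the real work to lie, is controlling the boundary layer $V_s$: a layerwise comparison of numerator and denominator succeeds for the inner layers but fails on $V_s$ (where the damping weight is only $1$), so one must either absorb this boundary contribution globally, using that the large inner weights dominate the denominator, or root the construction at an edge and choose $s$ carefully, in order to land on the sharp constant $2\sqrt{k-1}-1$ rather than a cruder $2\sqrt{k-1}$.
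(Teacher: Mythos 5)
The paper itself does not prove this statement (it is quoted from Nilli's paper), so I compare your argument against that source's proof. Your reduction to a Rayleigh-quotient bound and the identity $(k-1)(1-q)^2=k-2\sqrt{k-1}$ are correct, but the construction breaks at its first geometric step: with $s=\lfloor m/2\rfloor$, the balls $B_s(x)$ and $B_s(y)$ are in general \emph{neither} disjoint \emph{nor} non-adjacent. Disjointness is guaranteed only when $2s<m$, and the absence of connecting edges only when $2s+1<m$, whereas here $2s+1\ge m$ always: if $m$ is even, a midpoint of an $x$--$y$ geodesic lies in both balls; if $m$ is odd, the two spheres of radius $(m-1)/2$ can be joined by an edge. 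Once a connecting edge $uv$ exists, the cross term $2\alpha\beta f(u)g(v)$ in the energy of $h=\alpha f-\beta g$ has the wrong sign to be discarded, so $R(h)\le\max\{R(f),R(g)\}$ fails. The diameter hypothesis only allows radii with $a+b\le m-2$ (vertex-rooted), or $a+b\le m-4$ if you root at edges as Nilli does --- which is also what fixes your untreated root layer, where $x$ has $k$, not $k-1$, forward edges. Running Nilli's analysis with legitimate radii produces the denominator $\lfloor m/2\rfloor-1$, not $\lfloor m/2\rfloor$.

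This off-by-one cannot be patched, because the statement as printed is false: the hypercube $Q_4$ is $4$-regular with diameter $m=4$ and $\mu_2(Q_4)=2$, while the claimed bound is $4-2\sqrt{3}+\tfrac{1}{2}(2\sqrt{3}-1)=\tfrac{7}{2}-\sqrt{3}\approx 1.77$. (Nilli's actual theorem assumes two \emph{edges} at distance at least $2b+2$ and concludes $\mu_2\le k-2\sqrt{k-1}+\frac{2\sqrt{k-1}-1}{b+1}$; a diameter-$m$ hypothesis guarantees only two edges at distance $m-2$, whence the weaker denominator.) Two further points on your ansatz, independent of the radius issue. First, the constant $2\sqrt{k-1}-1$ comes from Nilli's \emph{pure} geometric weights $(k-1)^{-i/2}$ rooted at an edge: the boundary layer contributes $(k-1)-(k-2\sqrt{k-1})=2\sqrt{k-1}-1$ per unit of boundary weight $w_b=|V_b|(k-1)^{-b}$, and one divides by $\|f\|^2\ge(b+1)w_b$, which follows from the monotonicity $w_0\ge w_1\ge\cdots\ge w_b$ of layer weights --- not from your bound $\|f\|^2\ge(s+1)^2$. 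Second, the linear damping actively hurts for large $k$: on the tree ball (the extremal configuration for this argument) the Rayleigh quotient of $(s+1-i)(k-1)^{-i/2}$ exceeds $k-2\sqrt{k-1}$ by roughly $3(\sqrt{k-1}-1)/s$, which is worse than $(2\sqrt{k-1}-1)/s$ for all $k>5$, and is of order $1/s$, not the $1/s^2$ your final paragraph hopes for; error terms of order $1/m^2$ require sine-modulated weights (Friedman, and Nilli's later paper), not linear ones.
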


\begin{lemma}\label{l:vtspec}
Let $k \in \{4,5\}$ and $G$ be a connected vertex-transitive graph with degree $k$ and diameter $m>1$.	If 
	\begin{align*}
		\mu_2(G) > 1 -  \frac{2\sqrt{k-1}}{k} + \frac{2 \sqrt{k-1} -1}{k\left\lfloor m/2\right\rfloor},
	\end{align*}
then either $G$ is globally rigid in $\mathbb{R}^2$, or $G$ is one of the graphs depicted in Figures~\ref{fig:VertexTransitiveSpecialCase} and \ref{fig:secondSpecialCase}.
\end{lemma}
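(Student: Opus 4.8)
The plan is to use Lemma~\ref{l:vtspec}'s spectral hypothesis to rule out the non-globally-rigid cases of Theorem~\ref{t:jjs07}, leveraging the vertex-transitivity and the clique-contraction bound of Lemma~\ref{l:cliqreplace}. Since $G$ is connected vertex-transitive of degree $k \in \{4,5\}$, Theorem~\ref{t:jjs07} tells us that $G$ fails to be globally rigid in $\mathbb{R}^2$ only if either (a) its maximal clique has size exactly $k$, or (b) $G$ is small ($|V|\le 11$ when $k=4$, $|V|\le 28$ when $k=5$). So the first thing I would do is split into these two regimes and show that the spectral lower bound on $\mu_2(G)$ is incompatible with each, up to the finitely many exceptional graphs pictured in Figures~\ref{fig:VertexTransitiveSpecialCase} and \ref{fig:secondSpecialCase}.

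For case (a), I would apply Lemma~\ref{l:cliqreplace}. If every maximal clique has size $k$, then by vertex-transitivity \emph{every} vertex lies in a clique of size $k$ (a transitive automorphism group carries a maximum clique through one vertex onto one through any other), so the hypotheses of Lemma~\ref{l:cliqreplace} are met and we obtain a $k$-regular multigraph $H$ (the contraction) with
\begin{align*}
	\mu_2(G) \leq \frac{\mu_2(H)}{k}.
\end{align*}
Now I would bound $\mu_2(H)$ from above using Nilli's inequality (Theorem~\ref{t:nilli}) applied to $H$: if $H$ has diameter $m_H$, then
\begin{align*}
	\mu_2(H) \leq k - 2\sqrt{k-1} + \frac{2\sqrt{k-1}-1}{\lfloor m_H/2 \rfloor}.
\end{align*}
Dividing by $k$ gives exactly the quantity $1 - \tfrac{2\sqrt{k-1}}{k} + \tfrac{2\sqrt{k-1}-1}{k\lfloor m_H/2\rfloor}$ appearing in the Lemma, so I need to relate $m_H$ to the diameter $m$ of $G$. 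The key geometric point is that contracting each $K_k$ to a point can only shorten distances, and since each clique has diameter $1$, a shortest path in $G$ of length $m$ projects to a walk in $H$ of length at least $\lfloor m/2 \rfloor$ or so; more cleanly, $\lfloor m_H/2\rfloor \geq \lfloor m/2\rfloor$ should hold (or I would argue $m_H \ge m - (\text{cliques traversed})$ and then verify the monotonicity of the right-hand side in the floor term). Combining these yields $\mu_2(G) \le 1 - \tfrac{2\sqrt{k-1}}{k} + \tfrac{2\sqrt{k-1}-1}{k\lfloor m/2\rfloor}$, directly contradicting the hypothesis. This handles the infinite part of case (a), leaving only clique-contraction graphs whose contraction $H$ is itself small, which are finite in number.

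For case (b) and the residual finite families from case (a), the plan is a finite check: the spectral hypothesis together with the connectivity of small vertex-transitive graphs leaves only a bounded list of candidate graphs on at most $11$ (resp.\ $28$) vertices, which one enumerates and tests directly against the global-rigidity criterion; the two that are rigid but not globally rigid are exactly those in Figures~\ref{fig:VertexTransitiveSpecialCase} and \ref{fig:secondSpecialCase}, which is why they are recorded as exceptions. I expect the main obstacle to be the diameter comparison in case (a): one must argue carefully that the contraction does not decrease the relevant floor-of-half-diameter quantity, since $H$ is a multigraph (with possible parallel edges shortening paths) and the inequality in Nilli's theorem is sensitive to $\lfloor m_H/2\rfloor$. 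A secondary subtlety is confirming that vertex-transitivity forces uniform clique structure so that Lemma~\ref{l:cliqreplace} genuinely applies, and that the exceptional small graphs are correctly identified rather than merely bounded in count.
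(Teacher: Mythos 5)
Your plan follows the same route as the paper's proof: negate Theorem~\ref{t:jjs07}, use vertex-transitivity to place every vertex in a $K_k$, contract the cliques via Lemma~\ref{l:cliqreplace}, apply Theorem~\ref{t:nilli} to the contracted multigraph $H$, and treat small contractions separately. However, two steps of your proposal contain genuine gaps. The first is the diameter comparison, which you correctly single out as the crux but then resolve in the wrong direction. Since each vertex of $G$ has $k-1$ neighbours inside its clique, it has exactly one external edge; hence a geodesic in $G$ alternates inter-clique edges with at most one intra-clique edge per clique visited, and one obtains both $m \le 2m_H+1$ and $m_H \le (m+1)/2$, where $m_H$ is the diameter of $H$. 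So contraction does not preserve the diameter; it roughly halves it, and your claimed inequality $\lfloor m_H/2\rfloor \ge \lfloor m/2\rfloor$ fails for every $m\ge 4$ (only $\lfloor m_H/2\rfloor \ge \lfloor m/4\rfloor$ is available); indeed your own remark that contraction ``can only shorten distances'' gives $m_H \le m$, the opposite of what you need. With the correct relation, Nilli's theorem applied to $H$ yields an upper bound on $\mu_2(G)$ that is \emph{larger} than the lemma's threshold, so no contradiction with the hypothesis results. This is not a repairable slip of the write-up: the first graph of Figure~\ref{fig:nonrig-vramanujan} (four $K_4$'s joined in a doubled $4$-cycle pattern, $16$ vertices) is vertex-transitive, $4$-regular, of diameter $4$, not globally rigid, not among the listed exceptions, and a direct computation gives $\mu_2 = 3-\sqrt{5}\approx 0.76$, which exceeds the threshold $1-\frac{\sqrt{3}}{2}+\frac{2\sqrt{3}-1}{8}\approx 0.44$ for $m=4$. (Notably, the paper's own proof is silent at exactly this point: it writes Nilli's bound for $H$ with $\lfloor m/2\rfloor$, $m$ the diameter of $G$, in the denominator without justification. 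Your instinct about where the difficulty lies is sound, even though your proposed fix is not.)

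The second gap is your reading of Theorem~\ref{t:jjs07}. For $k\in\{4,5\}$, global rigidity holds when the clique number is at most $k-1$ \emph{or} the order is small; hence failure of global rigidity forces \emph{both} a clique of size $k$ \emph{and} $|V|\ge 12$ (resp.\ $|V|\ge 29$). Small order is therefore not a failure mode to be ruled out by enumeration --- those graphs are globally rigid outright --- and, more importantly, the lower bound on $|V|$ is precisely the asset the paper exploits in the case where $H$ has diameter one: for $k=5$ it forces $H$ to have at least six vertices, hence $H=K_6$, while for $k=4$ the diameter-one contractions are $K_5$ and $2K_3$; the expansions of these three multigraphs are exactly the exceptional graphs of Figures~\ref{fig:VertexTransitiveSpecialCase} and~\ref{fig:secondSpecialCase}, which have $30$, $20$ and $12$ vertices. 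Your plan to locate the exceptions among vertex-transitive graphs on at most $11$ (resp.\ $28$) vertices searches where they cannot occur, and your description of them is also off: there are three exceptional graphs, and one of them (the right-hand graph of Figure~\ref{fig:secondSpecialCase}) is not even rigid.
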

\begin{proof}
Suppose $G$ is not globally rigid in $\mathbb{R}^2$ and is not one of the graphs depicted in Figures~\ref{fig:VertexTransitiveSpecialCase} and~\ref{fig:secondSpecialCase}. It is immediate that $G$ is not a complete graph.
By Theorem \ref{t:jjs07}, $G$ contains a clique of size $k$ and has at least $n$ vertices, where $n = 12$ if $k=4$ and $n=30$ if $k=5$. As $G$ is vertex-transitive, every vertex of $G$ lies in a clique of size $k$. Define $H$ to be the $k$-regular vertex-transitive multigraph formed from contracting each clique of size $k$ to a point. If $H$ has a diameter of at least 2, then $\mu_2(G) \leq 1 -  \frac{2\sqrt{k-1}}{k} + \frac{2 \sqrt{k-1} -1}{k\left\lfloor m/2\right\rfloor}$ by Lemma~\ref{l:cliqreplace} and Theorem~\ref{t:nilli}.
	
Suppose that $H$ has diameter one. If $k=5$ then $H$ must have at least 6 vertices so that $G$ has at least 30 vertices. This implies that $H = K_6$ and $G$ is the graph depicted in Figure~\ref{fig:VertexTransitiveSpecialCase}, which contradicts our original assumption. If $k=4$ then $H$ is either $K_5$ or $2K_3$ (the multigraph formed from $K_3$ by doubling each edge). This implies that $G$ is one of the graphs depicted in Figure~\ref{fig:secondSpecialCase}, which contradicts our original assumption.
\end{proof}

\begin{figure}[ht]
    \centering
    \begin{tikzpicture}[scale=0.6]
        \coordinate (o) at (0,0);
        \foreach \x in {1,2,3}
        {
            \begin{scope}[rotate around={\x*120-30:(o)}]
                \begin{scope}[xshift=2cm]
                    \node[vertex] (\x1) at (-45:1) {};
                    \node[vertex] (\x2) at (45:1) {};
                    \node[vertex] (\x3) at (135:1) {};
                    \node[vertex] (\x4) at (-135:1) {};
                \end{scope}
                \draw[edge] (\x1)edge(\x2) (\x1)edge(\x3) (\x1)edge(\x4) (\x2)edge(\x3) (\x2)edge(\x4) (\x3)edge(\x4);
            \end{scope}
        }
        \draw[edge] (12)edge(21) (22)edge(31) (32)edge(11);
        \draw[edge] (13)edge(24) (23)edge(34) (33)edge(14);
    \end{tikzpicture}
    \qquad
    \begin{tikzpicture}[scale=0.6,rotate=90]
        \coordinate (o) at (0,0);
        \foreach \x in {1,2,3,4,5}
        {
            \begin{scope}[rotate around={\x*72:(o)}]
                \begin{scope}[xshift=2cm]
                    \node[vertex] (\x1) at (-45:0.8) {};
                    \node[vertex] (\x2) at (45:0.8) {};
                    \node[vertex] (\x3) at (135:0.8) {};
                    \node[vertex] (\x4) at (-135:0.8) {};
                \end{scope}
                \draw[edge] (\x1)edge(\x2) (\x1)edge(\x3) (\x1)edge(\x4) (\x2)edge(\x3) (\x2)edge(\x4) (\x3)edge(\x4);
            \end{scope}
        }
        \draw[edge] (12)edge(21) (22)edge(31) (32)edge(41) (42)edge(51) (52)edge(11);
        \draw[edge] (13)edge(34) (23)edge(44) (33)edge(54) (43)edge(14) (53)edge(24);
    \end{tikzpicture}
    \caption{Two 4-regular vertex-transitive graphs that are not globally rigid in $\mathbb{R}^2$.
    The graph on the left is rigid in $\mathbb{R}^2$, but the graph on the right is not.}
    \label{fig:secondSpecialCase}
\end{figure}
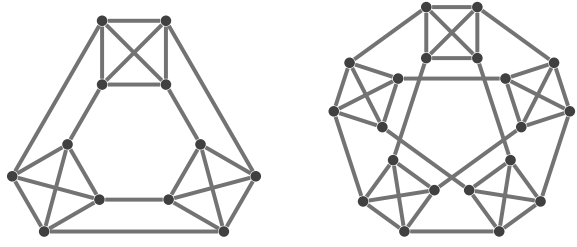

\begin{theorem}[\cite{BanIto73, Dam73, HoffSing60}]\label{t:moore}
	If $G=(V,E)$ is a $k$-regular graph with diameter $m$, then
	\begin{align*}
		|V| \leq 1 + k \sum_{i=0}^{m-1} (k-1)^i.
	\end{align*}
	Furthermore, this bound is strict if $k \notin \{2,3,7,57\}$.
\end{theorem}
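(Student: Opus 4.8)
The plan is to split the statement into the elementary upper bound and the far deeper equality (strictness) analysis, since these have completely different character. For the inequality, I would fix an arbitrary vertex $v$ and count $V$ by graph distance from $v$. There is exactly one vertex at distance $0$ and exactly $k$ at distance $1$; and for $i \ge 2$, every vertex at distance $i$ is adjacent to some vertex at distance $i-1$, each of which contributes at most $k-1$ neighbours not already accounted for at distance $i-2$. Hence at most $k(k-1)^{i-1}$ vertices lie at distance $i$. Since $G$ has diameter $m$, every vertex lies within distance $m$ of $v$, and summing gives $|V| \le 1 + \sum_{i=1}^{m} k(k-1)^{i-1} = 1 + k\sum_{i=0}^{m-1}(k-1)^i$, as claimed.

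For strictness the content is the classification of graphs attaining the bound (the \emph{Moore graphs}). Equality forces the BFS count above to be exact for every root $v$: each vertex at distance $i \le m-1$ has exactly $k-1$ forward neighbours and no two such forward-neighbourhoods collide, so $G$ has no short cycles and is in fact distance-regular with a rigidly determined intersection array. Note that when $m=1$ the bound is attained by every complete graph $K_{k+1}$, so strictness genuinely lives in the regime $m \ge 2$ that is relevant to the paper; I would phrase the argument accordingly and then handle $m=2$ and $m \ge 3$ separately.

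For $m=2$, a Moore graph is strongly regular with parameters $(k^2+1,\,k,\,0,\,1)$, so its adjacency matrix satisfies $A^2 + A - (k-1)I = J$. Besides the Perron value $k$, the eigenvalues are the roots $\tfrac{-1 \pm \sqrt{4k-3}}{2}$ of $x^2 + x - (k-1) = 0$, with integer multiplicities $m_1, m_2$ obeying $m_1 + m_2 = k^2$ together with the trace condition $k - \tfrac12(m_1+m_2) + \tfrac12(m_1-m_2)\sqrt{4k-3} = 0$. If $m_1 = m_2$ this forces $k=2$ (the pentagon); otherwise $\sqrt{4k-3}$ must be a necessarily odd integer $s$, and substituting $k = (s^2+3)/4$ into the trace condition reduces integrality of $m_1 - m_2$ to the single divisibility $s \mid 15$. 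The admissible odd divisors $s \in \{3,5,15\}$ give precisely $k \in \{3,7,57\}$, so together with the pentagon we obtain $k \in \{2,3,7,57\}$.

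For $m \ge 3$, the same distance-regularity pins the eigenvalues of $A$ to the roots of an explicit family of orthogonal polynomials determined by the intersection array, and demanding that the associated multiplicities be nonnegative integers becomes extremely restrictive. Carrying out this integrality analysis — the Damerell and Bannai--Ito argument — shows that no Moore graph of diameter at least $3$ exists beyond the odd cycles $C_{2m+1}$ with $k=2$. This last case is the main obstacle: unlike the $m=2$ situation it does not collapse to a single quadratic, and instead needs the full spectral machinery of distance-regular graphs to exclude every degree $k \ge 3$. Combining the two regimes yields strictness for all $k \notin \{2,3,7,57\}$.
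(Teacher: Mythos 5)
The paper does not prove this theorem at all: it is quoted as a classical result with citations to Hoffman--Singleton, Damerell, and Bannai--Ito, so there is no internal argument to measure your proposal against. What you have written is a correct reconstruction of the standard literature proof. The breadth-first counting argument for the bound is complete and correct. Your diameter-$2$ analysis is the genuine Hoffman--Singleton argument: equality forces a strongly regular graph with parameters $(k^2+1,k,0,1)$, the non-Perron eigenvalues are the roots of $x^2+x-(k-1)=0$, the case $m_1=m_2$ of equal multiplicities gives $k=2$, and otherwise $s=\sqrt{4k-3}$ must be an odd integer with $s\mid 15$, yielding $k\in\{3,7,57\}$ (you rightly discard the degenerate $s=1$, i.e.\ $k=1$). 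For diameter $m\geq 3$ you defer to Damerell and Bannai--Ito rather than prove anything, so strictly speaking your text is a proof modulo the same references the paper itself cites --- which is the appropriate level of rigor here, since reproving that classification would be a paper in itself, but you should be explicit that this part remains a citation. One genuine improvement over the paper's phrasing: you observe that strictness can only hold for $m\geq 2$, because $K_{k+1}$ attains the bound with $m=1$ for every $k$; the theorem as stated in the paper omits this hypothesis. The omission is harmless for the paper's application, which invokes strictness only for $m\in\{2,3\}$ with $k=4$, but your refined statement is the correct one.
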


We are now ready to prove Theorem~\ref{thm:transitive}.

\begin{proof}[Proof of Theorem~\ref{thm:transitive}]
Let $G$ be a $k$-regular vertex-transitive graph. If $G$ is complete, then we are done. Hence, we may assume the diameter of $G$ is at least 2. If $k \geq 6$ then $G$ is globally rigid in $\mathbb{R}^2$ by Theorem \ref{t:jjs07}. Thus, we may suppose $k \in \{4,5\}$. If $G$ is bipartite then $G$ is globally rigid in $\mathbb{R}^2$ by Theorem \ref{t:jjs07}, and so we may also suppose $G$ is not bipartite. If $k=5$ then
\begin{align*}
	\mu_2(G) \geq 1 > 1 - \frac{2 \sqrt{4}}{5} + \frac{2 \sqrt{4} -1}{5} \geq 1 - \frac{2 \sqrt{4}}{5} + \frac{2 \sqrt{4} -1}{5\left\lfloor m/2\right\rfloor},
\end{align*}
and so either $G$ is globally rigid in $\mathbb{R}^2$ by Lemma \ref{l:vtspec}, or $G$ is the graph in Figure \ref{fig:VertexTransitiveSpecialCase}. If $k=4$ and $|V| \geq 53$, then $G$ has diameter at least 4 by Theorem \ref{t:moore}. Since $|V| > 20$, $G$ is neither of the graphs in Figure \ref{fig:secondSpecialCase}. It now follows that
\begin{align*}
	\mu_2(G) \geq 4- 2 \sqrt{3} > 1 - \frac{2 \sqrt{3}}{4} + \frac{2 \sqrt{3} -1}{8} > 1 - \frac{2 \sqrt{3}}{4} + \frac{2 \sqrt{3} -1}{4\left\lfloor m/2\right\rfloor},
\end{align*}
and thus $G$ is globally rigid in $\mathbb{R}^2$ by Lemma \ref{l:vtspec}.
\end{proof}

\section{Edge connectivity and edge-disjoint spanning trees}
\label{sec: connandst}
Edge connectivity and edge-disjoint spanning trees are closely related to various types of graph rigidity that we shall explore in later sections. In this section, we list several useful results. Throughout the section we always assume $k, \ell, s, t$ are positive integers. The following result is the well-known spanning tree packing theorem.

\begin{theorem}[Nash-Williams~\cite{Nash61} and Tutte~\cite{Tutt61}]\label{NaTu}
A connected {\em(}multi{\em)}graph $G$ has $k$ edge-disjoint spanning trees if and only if for any $X\subseteq E(G), |X|\ge k(c(G-X)-1)$, where $c(G-X)$ denotes the number of connected components of $G-X$.
\end{theorem}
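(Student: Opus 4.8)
The plan is to prove both implications separately, with the forward (necessity) direction being a short counting argument and the reverse (sufficiency) direction carrying essentially all the content.

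For \emph{necessity}, suppose $G$ has edge-disjoint spanning trees $T_1,\dots,T_k$ and fix $X\subseteq E(G)$. Writing $V_1,\dots,V_c$ for the vertex sets of the $c=c(G-X)$ components of $G-X$, I would contract each $V_j$ to a single vertex. Every edge outside $X$ lies inside some $V_j$ and becomes a loop, so all non-loop edges of the contracted multigraph come from $X$. Each $T_i$ contracts to a connected spanning subgraph on the $c$ contracted vertices and therefore uses at least $c-1$ non-loop edges, all of which belong to $X\cap T_i$. Since the $T_i$ are edge-disjoint, summing over $i$ gives $|X|\ge \sum_i |X\cap T_i|\ge k(c-1)=k(c(G-X)-1)$.

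For \emph{sufficiency}, the cleanest route is through the matroid union theorem applied to the cycle (graphic) matroid $M(G)$, whose rank function is $r(Y)=|V(G)|-c_Y$ for $Y\subseteq E(G)$, where $c_Y$ denotes the number of components of the spanning subgraph $(V(G),Y)$. The $k$-fold union $M(G)^{\vee k}$ has as independent sets exactly those edge sets partitionable into $k$ forests, and its rank is
\[
r_k(E(G))=\min_{Y\subseteq E(G)}\bigl(|E(G)\setminus Y|+k\,r(Y)\bigr).
\]
The key observation is that $G$ has $k$ edge-disjoint spanning trees precisely when $r_k(E(G))=k(|V(G)|-1)$: any partition of $k(|V(G)|-1)$ edges into $k$ forests must make each forest a spanning tree, since a forest on $|V(G)|$ vertices has at most $|V(G)|-1$ edges. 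Since $r_k(E(G))\le k(|V(G)|-1)$ always holds, equality is equivalent to the inequality $r_k(E(G))\ge k(|V(G)|-1)$. Substituting $X=E(G)\setminus Y$ and $r(Y)=|V(G)|-c(G-X)$ into the rank formula, this inequality becomes exactly $|X|\ge k(c(G-X)-1)$ for every $X$, which is the hypothesis. Hence the union matroid attains full rank and the desired spanning trees exist.

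The \emph{main obstacle} is the sufficiency direction, and within it the real content is the matroid union rank formula itself; if one prefers a self-contained argument, this can be replaced by a direct augmentation proof that starts from a family of $k$ edge-disjoint forests of maximum total size and, assuming some forest is non-spanning, performs edge exchanges between forests along a search tree to either enlarge the family or expose a set $X$ violating the hypothesis. Either way, the technical heart is establishing that the counting condition is not merely necessary but also certifies the existence of the decomposition.
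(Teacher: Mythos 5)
The paper itself offers no proof of Theorem~\ref{NaTu}: it is stated as a classical result with citations to Nash-Williams and Tutte, so there is no in-paper argument to compare against; your proposal must be judged on its own, and it is correct. The necessity direction (contracting the components of $G-X$, noting every surviving non-loop edge lies in $X$, and that each contracted tree still needs at least $c(G-X)-1$ such edges) is the standard counting argument and is complete as written. For sufficiency, your use of the $k$-fold union of the graphic matroid is sound: the rank formula $r_k(E(G))=\min_{Y}\bigl(|E(G)\setminus Y|+k\,r(Y)\bigr)$, the substitution $X=E(G)\setminus Y$ with $r(Y)=|V(G)|-c(G-X)$, and the observation that an independent set of size $k(|V(G)|-1)$ must split into $k$ forests that are each forced to be spanning trees, all fit together correctly. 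The honest caveat, which you yourself flag, is that the matroid union rank formula (Edmonds, Nash-Williams) is a theorem of essentially the same depth as the packing theorem being proved, so your argument is a clean reduction rather than a self-contained proof; what it buys in exchange is generality, since the identical argument gives base packing in arbitrary matroids. The alternative you sketch --- taking $k$ edge-disjoint forests of maximum total size and performing exchange/augmentation steps until either a forest grows or a violating set $X$ is exposed --- is closer in spirit to the original proofs and to most textbook treatments, but in your write-up it remains a sketch: the construction of the exchange sequence and the extraction of the violating set $X$ from a failed search is precisely where all the technical work of the theorem lives.
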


The above spanning tree packing theorem implies that if $G$ is $2k$-edge-connected, then $G-e$ has $k$ edge-disjoint spanning trees for every edge $e$ of $G$. This can be improved by using the following parameter.
Define the \textbf{strength} $\eta(G)$ for a connected (multi)graph $G$ by $$\eta(G)=\min\frac{|X|}{c(G-X)-1},$$
where the minimum is taken over all edge subsets $X$ such that $G-X$ is disconnected. This parameter was first introduced for graphs by Gusfield~\cite{Gusf83}, was then extended to matroids by Cunningham~\cite{Cunn85}, and has been intensively studied in \cite{CGHL92}. The above spanning tree packing theorem by Nash-Williams~\cite{Nash61} and Tutte~\cite{Tutt61} indicates that a connected graph $G$ has $k$ edge-disjoint spanning trees if and only if $\eta(G)\ge k$. In other words, the maximum number of edge-disjoint spanning trees in $G$ is $\lfloor\eta(G)\rfloor$. Thus, $\eta(G)$ is also referred to as the \textbf{fractional spanning tree packing number} of $G$. The spanning tree packing theorem implies the following result.

\begin{corollary}\label{thm:gminus}
Let $G$ be a connected {\em(}multi{\em)}graph. Then $G-e$ has $k$ edge-disjoint spanning trees for every $e\in E(G)$ if and only if $\eta(G)>k$.
\end{corollary}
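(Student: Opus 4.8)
The plan is to prove Corollary~\ref{thm:gminus} directly from the Nash-Williams--Tutte theorem (Theorem~\ref{NaTu}), using the definition of the strength $\eta(G)$ as the bridge between the two. The key observation is that Theorem~\ref{NaTu} says $G$ has $k$ edge-disjoint spanning trees if and only if $|X| \geq k(c(G-X)-1)$ for every $X \subseteq E(G)$, and this inequality is precisely the statement that $\eta(G) \geq k$ once we recognize that $\eta(G) = \min_X |X|/(c(G-X)-1)$, where the minimum runs over edge subsets $X$ with $G-X$ disconnected (so that $c(G-X) \geq 2$ and the denominator is positive). Thus the first step is to restate the target: $G-e$ has $k$ edge-disjoint spanning trees for every $e \in E(G)$ if and only if $\eta(G-e) \geq k$ for every edge $e$, and I will show this collection of conditions is equivalent to the single condition $\eta(G) > k$.

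First I would prove the forward direction. Suppose $G-e$ has $k$ edge-disjoint spanning trees for every $e \in E(G)$; equivalently $\eta(G-e) \geq k$ for all $e$. I want to deduce $\eta(G) > k$, i.e.\ $|X| > k(c(G-X)-1)$ for every $X$ with $G-X$ disconnected. Take such an $X$ and pick any edge $e \in X$ (note $X$ is nonempty since removing no edges keeps $G$ connected). Applying $\eta(G-e) \geq k$ to the edge set $X \setminus \{e\}$ in the graph $G-e$ gives $|X \setminus \{e\}| \geq k(c((G-e)-(X\setminus\{e\}))-1) = k(c(G-X)-1)$, hence $|X| - 1 \geq k(c(G-X)-1)$, so $|X| \geq k(c(G-X)-1) + 1 > k(c(G-X)-1)$. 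Since $X$ was arbitrary, $\eta(G) > k$.

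For the converse, suppose $\eta(G) > k$. Fix an edge $e \in E(G)$; I must show $G-e$ has $k$ edge-disjoint spanning trees, equivalently $|Y| \geq k(c((G-e)-Y)-1)$ for every $Y \subseteq E(G-e)$ with $(G-e)-Y$ disconnected. Set $X = Y \cup \{e\}$, so that $G - X = (G-e)-Y$. Then $\eta(G) > k$ gives $|X| > k(c(G-X)-1)$, i.e.\ $|Y| + 1 > k(c((G-e)-Y)-1)$. Because the quantities $|Y|$ and $k(c((G-e)-Y)-1)$ are integers, strict inequality after adding $1$ to the left yields $|Y| \geq k(c((G-e)-Y)-1)$, which is exactly the Nash-Williams--Tutte condition for $G-e$. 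Hence $G-e$ has $k$ edge-disjoint spanning trees.

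The argument is essentially bookkeeping once the strength is unpacked, so I do not expect a serious obstacle; the one point requiring care is the integrality step in the converse, where passing from the strict inequality $|Y|+1 > k(c-1)$ to the non-strict $|Y| \geq k(c-1)$ relies crucially on both sides being integers (here $k$ is a positive integer by the standing assumption in the section). A secondary subtlety is correctly handling the edge cases: ensuring that $c(G-X) \geq 2$ so the defining ratio for $\eta$ is well-posed, and confirming that in the forward direction $X$ must contain at least one edge. I would also remark that the same equivalence, read through the fractional packing interpretation $\eta(G) = $ fractional spanning tree packing number, makes the statement intuitively clear: $\eta(G) > k$ leaves enough ``slack'' that deleting any single edge cannot drop the packing number below $k$.
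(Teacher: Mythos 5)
Your proof is correct and follows exactly the route the paper intends: the paper presents this corollary as an unproved, immediate consequence of the Nash--Williams--Tutte theorem (Theorem~\ref{NaTu}), and your argument---translating both directions into the packing inequality $|X| \ge k(c(G-X)-1)$ via the set exchange $X = Y \cup \{e\}$, with the integrality step supplying the converse---is precisely that omitted bookkeeping. The one point worth making explicit is that Theorem~\ref{NaTu} is stated for connected graphs, so applying it to $G-e$ requires $G-e$ to be connected; your argument does guarantee this implicitly (in the converse, $\eta(G) > k \ge 1$ rules out cut-edges by taking $X=\{e\}$, and in the forward direction $G-e$ is connected because it contains a spanning tree), so this is a presentational remark rather than a gap.
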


Cioab\u{a} and Wong \cite{CiWo12} initiated the investigation of the number of edge-disjoint spanning trees from eigenvalues and posed a conjecture that if $G$ is a $d$-regular graph with $\lambda_2(G) <d -\frac{2k-1}{d+1}$, then $G$ contains $k$ edge-disjoint spanning trees, where $d\geq 2k\geq 4$. The conjecture was completely settled in \cite{LHGL14}, and actually it was proved that if $G$ is a graph with minimum degree $\delta \ge 2k$ and $\mu_2(G)>\frac{2k-1}{\delta+1}$, then $G$ contains $k$ edge-disjoint spanning trees. This result was extended from simple graphs to multigraphs in \cite{Gu16} and from spanning tree packing to a fractional version in \cite{HGLL16}. We outline these results below.

\begin{theorem}[Liu et al.~\cite{LHGL14}]\label{thm:LHGL}
Let $G$ be a graph with minimum degree $\delta\geq 2k$. If $\mu_{2}(G)>\frac{2k-1}{\delta+1}$ (in particular, if $\lambda_2(G)< \delta-\frac{2k-1}{\delta+1}$), then $G$ has at least $k$ edge-disjoint spanning trees.
\end{theorem}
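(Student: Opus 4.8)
The plan is to argue by contradiction using the Nash--Williams/Tutte characterization (Theorem~\ref{NaTu}) as the combinatorial skeleton, and to feed in the spectral hypothesis through a lower bound on crossing edges. Suppose $G$ has minimum degree $\delta \geq 2k$ but fails to have $k$ edge-disjoint spanning trees. By Theorem~\ref{NaTu} (equivalently, because $\eta(G) < k$), there is a partition $\mathcal{P} = \{V_1,\ldots,V_t\}$ of $V(G)$ with $t \geq 2$ parts for which the number of crossing edges satisfies $e_G(\mathcal{P}) \leq k(t-1)-1$. Writing $n_i = |V_i|$ and $e(V_i,\overline{V_i})$ for the number of edges leaving $V_i$, so that $\sum_i e(V_i,\overline{V_i}) = 2\,e_G(\mathcal{P}) \leq 2k(t-1)-2$, the goal is to show this is incompatible with $\mu_2(G) > \tfrac{2k-1}{\delta+1}$.

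The first ingredient is purely combinatorial and exploits $\delta \geq 2k$ to control part sizes. Degree counting on a part gives
\[
e(V_i,\overline{V_i}) = \sum_{v \in V_i}\deg(v) - 2\,|E(V_i)| \geq \delta n_i - n_i(n_i-1) = n_i(\delta+1-n_i),
\]
so any part with $1 \leq n_i \leq \delta$ sends at least $\delta \geq 2k$ edges across its boundary. Hence such ``small'' parts are expensive: if every part were small we would get $2\,e_G(\mathcal{P}) \geq t\delta \geq 2kt > 2k(t-1)-2$, a contradiction, forcing the existence of at least one ``large'' part with $n_i \geq \delta+1$ and bounding the number of small parts against the crossing-edge budget.

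The second ingredient is spectral. The variational characterization of the algebraic connectivity (as used in Lemma~\ref{l:cliqreplace}) yields, for any nonempty proper $S \subseteq V(G)$, the cut bound $e(S,\overline{S}) \geq \tfrac{\mu_2(G)\,|S|\,(n-|S|)}{n}$; applied to the normalized indicator vectors of the parts and Cauchy interlacing on the $t \times t$ quotient of $L(G)$, this sharpens to the cleaner inequality $(t-1)\,\mu_2(G) \leq \sum_{i=1}^{t} \tfrac{1}{n_i}\,e(V_i,\overline{V_i})$. I would combine this with the structural facts above: the large parts can contribute almost nothing to the right-hand side (a near-clique part has tiny boundary), while the small parts, although individually expensive in the crossing-edge budget, are few and are weighted by $1/n_i$; threading these two bounds against $e_G(\mathcal{P}) \leq k(t-1)-1$ should drive $\mu_2(G)$ below $\tfrac{2k-1}{\delta+1}$.

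The main obstacle is the exact calibration of the constant. The crude single-cut estimate loses a factor of roughly two and only yields $\mu_2(G) \lesssim \tfrac{2(2k-1)}{\delta}$, so reaching the sharp threshold $\tfrac{2k-1}{\delta+1}$ requires the tighter interlacing inequality together with a careful extremal analysis over the admissible part-size distributions. The worst case is a strongly unbalanced partition consisting of one very large (near-clique) part together with several small parts or singletons: the singletons inflate $\sum_i e(V_i,\overline{V_i})/n_i$ and must be neutralized by playing the degree bound $\delta \geq 2k$ against the crossing-edge budget before the spectral inequality can be applied cleanly. Making this balancing argument yield precisely $\delta+1$ in the denominator, rather than $\delta$ or $2\delta$, is the delicate heart of the proof, and is where I expect essentially all of the real work to lie; the parenthetical consequence for $\lambda_2$ is then immediate from $\mu_2(G) \geq \delta - \lambda_2(G)$ in the regular case.
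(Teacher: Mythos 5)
First, a framing point: the paper does not prove this theorem at all --- it is imported verbatim from the cited reference \cite{LHGL14} --- so there is no internal proof to compare against, and your proposal has to stand on its own. It does not. Your skeleton (Nash--Williams/Tutte, the degree count $e(V_i,\overline{V_i}) \geq n_i(\delta+1-n_i)$, quotient-matrix interlacing) is the right family of tools, and each inequality you state is individually correct. But the proposal explicitly defers the step that \emph{is} the theorem (``where I expect essentially all of the real work to lie''), so it is a plan rather than a proof; worse, the specific chain you propose to thread --- crossing-edge budget plus the trace-form interlacing bound $(t-1)\mu_2(G) \leq \sum_i e(V_i,\overline{V_i})/n_i$ --- provably cannot reach the target.

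Here is a concrete obstruction. Take $k \geq 2$, $\delta = 2k$, and a graph consisting of three disjoint cliques $K_{2k+1}$ together with one extra vertex $v$ whose $2k$ edges go to the cliques, plus at most $k-1$ additional edges between the cliques. The minimum degree is $2k$, and the partition $\mathcal{P}$ into $\{v\}$ and the three cliques has $t = 4$ and $e_G(\mathcal{P}) = 2k + (k-1) \leq k(t-1)-1$, so the Nash--Williams failure budget is satisfied (and indeed $\tau(G) \leq k-1$). Yet $\sum_i e(V_i,\overline{V_i})/n_i \geq \deg(v)/1 = 2k$, whereas the bound you would need is $(t-1)\tfrac{2k-1}{\delta+1} = \tfrac{3(2k-1)}{2k+1} < 3$. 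A single singleton part inflates the trace beyond repair, and your suggestion that small parts are ``neutralized by playing the degree bound $\delta \geq 2k$ against the crossing-edge budget'' does not happen: this configuration survives both constraints. (Note also that the $1/n_i$ weighting works \emph{against} you on small parts, not for you.) The theorem is still true for such graphs --- e.g.\ the test vector equal to $2$ on one clique, $-1$ on the other two, and $0$ on $v$ certifies $\mu_2 \leq \tfrac{2k-1}{2k+1}$ --- but certifying it requires either bounding the second quotient eigenvalue directly instead of by the average $\operatorname{trace}/(t-1)$, or coarsening/modifying the partition before applying any spectral inequality, together with a proof that this can always be done within the budget. Supplying that mechanism is precisely the hard content of the published proof, and it is absent from the proposal.
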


\begin{theorem}[Gu~\cite{Gu16}]\label{thm:mgac}
Let $G$ be a multigraph with multiplicity $m$ and minimum degree $\delta \geq 2k$, and define $\ell := \max \left\{ \left\lceil (\delta +1)/m\right\rceil, 2 \right\}$. If $\mu_2(G) > \frac{2k-1}{\ell}$, then $G$ contains $k$ edge-disjoint spanning trees.
\end{theorem}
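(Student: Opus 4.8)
The plan is to prove the contrapositive through the Nash--Williams/Tutte criterion (Theorem~\ref{NaTu}). Suppose $G$ has fewer than $k$ edge-disjoint spanning trees; then there is an edge set $X$ with $|X| < k(c(G-X)-1)$. I would take $V_1,\dots,V_t$ to be the vertex classes of the components of $G-X$, so that $t = c(G-X) \ge 2$, the edges of $X$ are exactly those joining distinct classes, and $\sum_{i<j} e(V_i,V_j) \le k(t-1)-1$. Writing $n_i = |V_i|$ and $b_i = e(V_i, V(G)\setminus V_i)$ for the boundary of class $i$, this says $\sum_i b_i = 2\sum_{i<j}e(V_i,V_j) \le 2k(t-1)-2$. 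The entire proof then reduces to establishing the \emph{reverse} inequality $\sum_i b_i \ge 2k(t-1)$, which is the contradiction. This is exactly the strategy behind the simple-graph statement of Theorem~\ref{thm:LHGL}, so the real task is to carry it through for multigraphs in a way that forces the parameter $\ell$ to appear.

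Two lower bounds on each $b_i$ drive the argument. The first is \textbf{spectral}: inserting the test vector $\mathbf{1}_{V_i} - \tfrac{n_i}{n}\mathbf{1}$, which is orthogonal to $\mathbf{1}$, into the Courant--Fischer formula $\mu_2(G) = \min_{x\perp\mathbf{1}} x^T L(G) x / x^T x$ yields
\[
  b_i \ \ge\ \mu_2(G)\,\frac{n_i(n-n_i)}{n}.
\]
The second is \textbf{combinatorial}: every vertex of $V_i$ has degree at least $\delta$, but at most $m(n_i-1)$ of its incident edges stay inside $V_i$ (multiplicity at most $m$ to each of the other $n_i-1$ vertices), so each vertex sends at least $\delta - m(n_i-1)$ edges across the boundary, giving $b_i \ge n_i(\delta - m(n_i-1))$. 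The meaning of $\ell = \max\{\lceil(\delta+1)/m\rceil,2\}$ is that it is precisely the smallest class size for which the combinatorial bound can go vacuous: a class with $n_i \le \ell-1$ cannot absorb all of each vertex's degree internally and is forced to leak a large boundary, whereas a class with $n_i \ge \ell$ may be internally self-sufficient and so must instead be controlled by the spectral estimate.

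I would then split the classes at this threshold. For a \emph{small} class ($1 \le n_i \le \ell-1$) the combinatorial bound $f(n_i) := n_i(\delta - m(n_i-1))$ is concave in $n_i$, so its minimum over the range is attained at an endpoint; a short check using $m(\ell-1)\le\delta$ shows both $f(1)=\delta$ and $f(\ell-1)$ are at least $2k$, whence $b_i \ge 2k$ for every small class. For a \emph{big} class ($n_i \ge \ell$) the spectral bound gives $b_i \ge \mu_2(G)\,\ell\,\tfrac{n-n_i}{n} > (2k-1)\tfrac{n-n_i}{n}$ once the hypothesis $\mu_2(G) > (2k-1)/\ell$ is used, and the integrality of $b_i$ is what upgrades ``$>2k-1$'' to ``$\ge 2k$'' whenever the factor $\tfrac{n-n_i}{n}$ is bounded away from $0$. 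Summing the per-class contributions and tracking the $-1$ in the cut bound should then yield $\sum_i b_i \ge 2k(t-1)$.

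The main obstacle is the interface between the two regimes, and specifically the case of a single \emph{giant} class whose size is close to $n$: there the factor $\tfrac{n-n_i}{n}$ makes the spectral bound weak, and the shortfall in that class's boundary has to be recovered from the fact that every remaining class is then small (their sizes summing to $n-n_i$) and hence carries a large combinatorial boundary of at least $2k$. Making this exchange quantitative, handling the accompanying off-by-one slack through integrality, and verifying that the worst configurations---essentially two dense blobs of size $\ell$ joined by a sparse cut, which is the extremal example showing the threshold $(2k-1)/\ell$ is sharp---only just fail to violate $\sum_i b_i \ge 2k(t-1)$, is where the delicate arithmetic lies. The one genuinely new ingredient relative to Theorem~\ref{thm:LHGL} is the replacement of the simple-graph internal-edge bound $\binom{n_i}{2}$ by its multigraph counterpart $m\binom{n_i}{2}$, which is exactly the source of the division by $m$ in the definition of $\ell$.
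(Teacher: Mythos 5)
Your setup is sound and matches the strategy behind the cited result (note that the paper itself does not prove Theorem~\ref{thm:mgac}; it quotes it from [Gu16], so the comparison here is with the proofs in that line of work): the Nash--Williams/Tutte reduction, the spectral bound $b_i \ge \mu_2(G)\,n_i(n-n_i)/n$, the multigraph bound $b_i \ge n_i(\delta-m(n_i-1))$, and the conclusion that every class of size at most $\ell-1$ has $b_i \ge \delta \ge 2k$ are all correct (concavity plus $m(\ell-1)\le\delta$ gives $f(\ell-1)\ge\delta$, as you indicate). The genuine gap is in the endgame for big classes, and it is not where you place it. Your spectral bound gives $b_i > (2k-1)\,n_i(n-n_i)/(\ell n) \ge (2k-1)(n-n_i)/n$, and since $(n-n_i)/n<1$ this quantity is \emph{always strictly below} $2k-1$; integrality can therefore yield at best $b_i \ge 2k-1$, never the claimed upgrade to $b_i\ge 2k$. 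That loss of $1$ per class is fatal: summing over the $r$ big classes (using $\sum_i n_i \le n$) gives only $\sum_{\text{big}} b_i > (2k-1)(r-1)$, i.e.\ $\ge (2k-1)(r-1)+1$, whereas the contradiction requires $\sum_{\text{big}} b_i > 2k(r-1)-2$; the deficit grows like $r-3$, so the argument fails as soon as there are four or more big classes, and even the best-case per-class bound $b_i \ge 2k-1$ only closes the gap when $r \le 2k+1$. Meanwhile the case you single out as the main obstacle, a single giant class, is actually the easy one: if all remaining classes are small, then $\sum_i b_i \ge 2k(t-1)+1 > 2k(t-1)-2$ already. The configuration your plan provably cannot handle is many big classes of size comparable to $\ell$ (e.g.\ $r$ dense blobs of $\ell$ vertices joined in a sparse path): there the class-versus-complement bound dilutes and no summation recovers $2k$ per class, even though such graphs are exactly the near-extremal examples for the theorem.

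The missing idea, which drives the proofs of Cioab\u{a}--Wong, Liu--Hong--Gu--Lai and Gu, is to stop comparing each class against its complement and instead compare \emph{two} selected parts against each other. One first shows at least two ``light'' parts $V_i,V_j$ with $b_i,b_j \le 2k-1$ exist (otherwise $\sum_i b_i \ge 2k(t-1)+1$), notes that by your own combinatorial bound they satisfy $n_i,n_j \ge \ell$, and then plugs the single test vector $x = n_j\mathbf{1}_{V_i} - n_i\mathbf{1}_{V_j}$ (supported on $V_i\cup V_j$, orthogonal to $\mathbf{1}$) into the Rayleigh quotient, obtaining $\mu_2(G) \le \bigl(n_j^2 b_i + n_i^2 b_j + 2 n_i n_j e(V_i,V_j)\bigr)/\bigl(n_i n_j (n_i+n_j)\bigr)$. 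When $e(V_i,V_j)=0$ this is at most $(2k-1)(n_i^2+n_j^2)/\bigl(n_i n_j(n_i+n_j)\bigr) \le (2k-1)/\ell$, the desired contradiction; controlling $e(V_i,V_j)$ in general is what a careful choice of the Nash--Williams/Tutte partition buys (for instance, taking a witnessing partition with the fewest parts forces any two parts to have at most $k-1$ edges between them, since merging two parts joined by $\ge k$ edges would give a smaller witnessing partition), and the residual case analysis is where the real delicacy of those proofs lies. The crucial structural point is that this two-part comparison does not degrade as the number of classes grows, which is precisely why it succeeds where your per-class summation cannot.
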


\begin{theorem}[Hong et al.~\cite{HGLL16}]\label{thm:frac}
Let $G$ be a graph with $\delta\ge 2s/t$.
If $\mu_2(G)>\frac{2s-1}{t(\delta +1)}$, then $\eta(G)\ge s/t$.
\end{theorem}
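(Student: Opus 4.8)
The plan is to deduce the fractional statement from the integer-weighted multigraph result already available as Theorem~\ref{thm:mgac}, via an edge-multiplication trick, rather than re-running a partition argument from scratch.

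First I would form the multigraph $G^{(t)}$ obtained from $G$ by replacing every edge with $t$ parallel copies. Since $G$ is simple, $G^{(t)}$ has maximum edge multiplicity exactly $m=t$ and minimum degree $\delta(G^{(t)}) = t\delta$. Two scaling identities drive the argument. On the spectral side, $L(G^{(t)}) = t\,L(G)$, because multiplying every edge by $t$ scales both the degree matrix and the adjacency matrix by $t$; hence every Laplacian eigenvalue scales by $t$, and in particular $\mu_2(G^{(t)}) = t\,\mu_2(G)$. On the combinatorial side, $\eta(G^{(t)}) = t\,\eta(G)$: for any partition the number of cross edges is multiplied by $t$ while the number of parts is unchanged, and an optimal disconnecting edge set never gains by deleting only some copies of a parallel bundle, since deleting fewer than all $t$ copies leaves the two endpoints joined and so cannot increase the number of components. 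Read through the definition of the strength $\eta$ as a minimum over disconnecting edge subsets, this yields the claimed identity.

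Next I would verify that the hypotheses of Theorem~\ref{thm:mgac} hold for $G^{(t)}$ with the target parameter $k=s$. The degree condition $\delta(G^{(t)}) = t\delta \ge 2s$ is exactly the assumption $\delta \ge 2s/t$. The auxiliary parameter becomes $\ell = \max\{\lceil (t\delta+1)/t\rceil, 2\} = \max\{\lceil \delta + 1/t\rceil, 2\}$, and since $\delta$ is an integer and $0 < 1/t \le 1$ we have $\lceil \delta + 1/t\rceil = \delta+1 \ge 2$, so $\ell = \delta + 1$. The eigenvalue hypothesis $\mu_2(G^{(t)}) > \frac{2s-1}{\ell} = \frac{2s-1}{\delta+1}$ becomes, after dividing by $t$ and using $\mu_2(G^{(t)}) = t\,\mu_2(G)$, precisely the hypothesis $\mu_2(G) > \frac{2s-1}{t(\delta+1)}$ of Theorem~\ref{thm:frac}.

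Finally, Theorem~\ref{thm:mgac} then yields that $G^{(t)}$ contains $s$ edge-disjoint spanning trees, i.e.\ $\eta(G^{(t)}) \ge s$; dividing by $t$ via $\eta(G^{(t)}) = t\,\eta(G)$ gives $\eta(G) \ge s/t$, as required. I expect no serious obstacle here: the only points needing care are the two scaling identities — in particular the combinatorial identity $\eta(G^{(t)}) = t\,\eta(G)$, where one must justify that an optimal cut removes whole parallel bundles — together with the ceiling computation that pins down $\ell = \delta+1$. Both are routine, so the real content is simply the observation that multiplying edges converts the fractional threshold $\frac{2s-1}{t(\delta+1)}$ into the integer threshold $\frac{2s-1}{\delta+1}$ to which Gu's multigraph theorem directly applies.
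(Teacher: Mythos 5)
Your proposal is correct, but a comparison with ``the paper's proof'' is moot in one respect: the paper does not prove this statement at all --- it quotes it as a black box from Hong et al.~\cite{HGLL16}, where it is established by a direct partition-type argument. Your derivation is therefore necessarily a different route, and it checks out: the degree condition $t\delta \ge 2s$ is exactly $\delta \ge 2s/t$; the multiplicity of $tG$ is $t$, so the ceiling computation $\ell = \max\left\{ \left\lceil \delta + \tfrac{1}{t} \right\rceil, 2\right\} = \delta+1$ is right (using that $\delta$ is a positive integer); $\mu_2(tG) = t\,\mu_2(G)$ since $L(tG)=tL(G)$; and the conversion between ``$s$ edge-disjoint spanning trees in $tG$'' and ``$\eta(tG)\ge s$'' is exactly Theorem~\ref{NaTu}, while $\eta(tG)=t\,\eta(G)$ is Lemma~\ref{thm:eta} (your bundle argument is a correct re-proof of it). It is worth noting that your device is precisely the one the paper itself uses elsewhere, only run in the opposite direction: Lemma~\ref{lem:tgst} scales $G$ up to $tG$ and invokes Theorem~\ref{thm:frac} to get tree packings in $tG-e$, and the proof of Theorem~\ref{thm:bh}~(i) performs your exact $\ell=\delta+1$ computation for $(\binom{d+1}{2}-1)G$ before applying Theorem~\ref{thm:mgac}. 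What your route buys is economy: the fractional theorem becomes a one-paragraph corollary of Gu's multigraph theorem, with no new partition argument. What it costs is logical independence: your proof is only non-circular because Theorem~\ref{thm:mgac} in \cite{Gu16} is proved independently of \cite{HGLL16} (which is indeed the case historically), whereas the original proof of Theorem~\ref{thm:frac} stands on its own.
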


The following spectral conditions for edge connectivity were provided in \cite{Cio10} for regular graphs, and similar results were proved for general graphs by \cite{GLLY12, LiHL13}.
\begin{theorem}[Cioab\u{a}~\cite{Cio10}]\label{thm:ecn}
Let $G$ be a $k$-regular graph with $n$ vertices and $k\ge \ell \ge 2$.
If $\lambda_2(G) \le k -\frac{(\ell -1)n}{(k+1)(n-k-1)}$, then $G$ is $\ell$-edge-connected. In particular, if $\lambda_2(G)< k -\frac{2(\ell -1)}{k+1}$, then $G$ is $\ell$-edge-connected.
\end{theorem}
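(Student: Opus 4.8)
The plan is to argue by contraposition: assuming $G$ is not $\ell$-edge-connected, I would produce an edge cut whose smallness forces $\lambda_2(G)$ to exceed the stated threshold. So suppose $G$ has an edge cut $[S,\bar{S}]$ of size $t \le \ell-1$ separating $V$ into nonempty parts $S$ and $\bar{S} = V \setminus S$, and without loss of generality take $s := |S| \le n/2$.

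The first and most important step is a lower bound $s \ge k+1$ on the size of the smaller side. Since $G$ is $k$-regular, the number of edges inside $S$ equals $e(S) = (ks-t)/2$, and because $G[S]$ is simple this must satisfy $e(S) \le \binom{s}{2}$. This rearranges to the quadratic inequality $s^2 - (k+1)s + t \ge 0$. Using $t \le \ell-1 \le k-1$ (here the hypothesis $\ell \le k$ is essential), the discriminant satisfies $(k+1)^2 - 4t \ge (k-1)^2 + 4$, so the smaller root of the quadratic is strictly below $1$; since $s \ge 1$ lies to the right of that root, it must lie at or beyond the larger root, which exceeds $k$. Hence $s \ge k+1$. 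This counting step is what makes the bound sharp, and I expect it to be the main obstacle to get exactly right.

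Next I would invoke the spectral edge-cut bound $t = e(S,\bar{S}) \ge (k-\lambda_2(G))\,s(n-s)/n$ (the mixing-type estimate of \cite[Lemma~1.2]{Cio10}, equivalently obtained from Haemers interlacing applied to the quotient matrix of the partition $\{S,\bar{S}\}$, whose nontrivial eigenvalue is $k - tn/(s(n-s))$). Rearranging gives $\lambda_2(G) \ge k - tn/(s(n-s))$. It remains to minimize the right-hand side over the allowed range of $s$ and $t$. Since $s(n-s)$ is increasing on $[1,n/2]$ and $k+1 \le s \le n/2$, a short computation (writing $s = k+1+a$ and checking $a(n-2k-2-a) \ge 0$) shows $s(n-s) \ge (k+1)(n-k-1)$; combined with $t \le \ell-1$ this yields $\lambda_2(G) \ge k - (\ell-1)n/\big((k+1)(n-k-1)\big)$, contradicting the hypothesis. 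The boundary case, where $s = k+1$, $t = \ell-1$, and the partition is equitable so that interlacing is tight, is the one delicate point: there one must check separately that the resulting configuration is still $\ell$-edge-connected (or cannot occur), which is exactly what determines whether the hypothesis can be stated with a non-strict inequality.

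Finally, for the ``in particular'' clause I would observe that whenever $n \ge 2(k+1)$ one has $n/(n-k-1) \le 2$, so $(\ell-1)n/\big((k+1)(n-k-1)\big) \le 2(\ell-1)/(k+1)$; thus the simpler condition $\lambda_2(G) < k - 2(\ell-1)/(k+1)$ implies the sharper one, while the few $k$-regular graphs with $n < 2(k+1)$ are dense enough to be handled directly.
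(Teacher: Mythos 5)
Your proof skeleton is essentially the original one from \cite{Cio10} (the present paper only quotes the theorem; its proof lives in that reference): contraposition, the counting bound $e(S)=(ks-t)/2\le\binom{s}{2}$ forcing $s\ge k+1$ on the small side of a cut of size $t\le\ell-1$, the quotient/Fiedler bound $t\ge (k-\lambda_2(G))\,s(n-s)/n$, and the monotonicity of $s(n-s)$ giving $s(n-s)\ge (k+1)(n-k-1)$. Your handling of the ``in particular'' clause is also the intended one: for $n\ge 2k+2$ the simpler threshold implies the sharper one, and for $n<2k+2$ every $k$-regular graph is already $k$-edge-connected, which is precisely the content of the Remark following Theorem~\ref{thm:ecn}.

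There is, however, one genuine gap: as written, your argument proves the theorem only with a \emph{strict} hypothesis, whereas the statement assumes $\lambda_2(G)\le k-\frac{(\ell-1)n}{(k+1)(n-k-1)}$. Your chain of inequalities ends with $\lambda_2(G)\ge k-\frac{(\ell-1)n}{(k+1)(n-k-1)}$, which does not contradict the non-strict hypothesis; you flag this boundary case yourself (``one must check separately that the resulting configuration \ldots cannot occur'') but you never resolve it, and even leave open whether the theorem should be stated with $\le$ at all. The missing step is short and goes as follows. If equality holds throughout, then $t=\ell-1$, $s(n-s)=(k+1)(n-k-1)$ (so $s=k+1$), and the Rayleigh-quotient bound is tight, i.e.\ the two-valued test vector taking value $n-s$ on $S$ and $-s$ on $V\setminus S$ is an eigenvector of $L(G)$ for $\mu_2(G)=k-\lambda_2(G)$. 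Evaluating $L(G)$ at a vertex $v\in S$ shows this forces every vertex of $S$ to have the same number of neighbours across the cut, namely $t/s=(\ell-1)/(k+1)$. Since $1\le \ell-1\le k-1$, this quantity lies strictly between $0$ and $1$, so it cannot be the integer $d_{V\setminus S}(v)$ --- a contradiction. Hence equality can never occur at a cut of size at most $\ell-1$, the configuration indeed ``cannot occur'', and the theorem is correct as stated with the non-strict inequality.
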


\begin{remark}
Note that for $n < 2k +2$, every $k$-regular graph $G$ is $k$-edge-connected. To see this, suppose that there is a nonempty proper vertex subset $A\subset V(G)$ such that there are less than $k$ edges between $A$ and its complement $\overline{A}$. By counting the degree sum in $A$, we obtain that $k|A|< |A|(|A|-1)+k$, which implies that $|A| > k$, and so $|A|\geq k+1$. 
Similarly, $|\overline{A}|\geq k+1$, hence $n = |A| + |\overline{A}|\geq 2k+2$, contrary to $n < 2k+2$. Thus, in Theorem~\ref{thm:ecn}, we can always assume $n\ge 2k+2$.
\end{remark}

\begin{theorem}[Cioab\u{a} and Gu~\cite{CiGu16}]\label{thm:2-conn}
For any connected $k$-regular graph $G$ with $k\ge 3$, if 
$$\lambda_2 (G) < \left\{ \begin{array}{ll}
        \frac{k-2 + \sqrt{k^2 +12}}{2}, & \textrm{if $k$ is even},\\ 
        \frac{k-2 + \sqrt{k^2 +8}}{2}, & \textrm{if $k$ is odd},
        \end{array} \right.
$$
then $G$ is 2-connected.
\end{theorem}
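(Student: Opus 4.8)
The plan is to prove the contrapositive: if a connected $k$-regular graph $G$ with $k\ge 3$ is \emph{not} $2$-connected, then $\lambda_2(G)$ is at least the claimed threshold. Since $G$ is connected but not $2$-connected it has a cut vertex $v$; let $C_1,\dots,C_t$ (with $t\ge 2$) be the connected components of $G-v$, and set $d_i:=|N(v)\cap V(C_i)|$. Because $v$ is a cut vertex each $d_i\ge 1$, and $\sum_i d_i=k$, so in particular every $d_i\le k-1$. The key reduction is eigenvalue interlacing for induced subgraphs: the subgraph induced on $V(C_1)\cup V(C_2)$ is the disjoint union $G[C_1]\sqcup G[C_2]$, whose two largest eigenvalues are $\lambda_1(G[C_1])$ and $\lambda_1(G[C_2])$, so Cauchy interlacing gives $\lambda_2(G)\ge \min\{\lambda_1(G[C_1]),\lambda_1(G[C_2])\}$. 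Thus it suffices to show that $\lambda_1(G[C_i])\ge T_k$ for every component, where $T_k$ denotes the relevant (even/odd) threshold.

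Next I would analyze the structure of $H:=G[C_i]$. A vertex of $C_i$ adjacent to $v$ loses exactly one edge in $H$ while the others keep all $k$ edges, so $H$ has exactly $d_i$ vertices of degree $k-1$ and the rest of degree $k$; in particular $\Delta(H)\le k$, giving $\lambda_1(H)\le k$. Some vertex of $C_i$ is not adjacent to $v$ (otherwise $|C_i|=d_i\le k-1$, while a degree-$k$ vertex forces $|C_i|\ge k+1$), and that vertex has all $k$ of its neighbours inside $C_i$, whence $|C_i|\ge k+1$. I would then split into two regimes. For $|C_i|\ge k+2$, the all-ones test vector gives the average-degree bound
\[
\lambda_1(H)\ \ge\ k-\frac{d_i}{|C_i|}.
\]
The handshake identity in $H$ forces $k|C_i|-d_i$ to be even, so for even $k$ one has $d_i\le k-2$ and for odd $k$ one has $d_i\le k-1$. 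Substituting $|C_i|\ge k+2$ together with the maximal admissible $d_i$ and comparing with $T_k$ reduces, after clearing denominators and squaring, to a polynomial inequality that factors as $4(k-2)^2\ge 0$ in the even case and $4(k-1)^2\ge 0$ in the odd case; hence $\lambda_1(H)\ge T_k$ throughout this regime.

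The remaining and decisive regime is $|C_i|=k+1$, where the average-degree bound alone is too weak and an exact spectral computation is needed. Here every degree-$k$ vertex is adjacent to all others, so the missing edges lie among the $d_i$ deficient vertices; each of these misses exactly one edge, so they form a perfect matching $M$ on those $d_i$ vertices. In particular $d_i$ is even and $H=K_{k+1}-M$. The partition into matched and unmatched vertices is equitable, and the Perron root of the resulting $2\times 2$ quotient matrix is
\[
\lambda_1(H)=\frac{(k-2)+\sqrt{(k-2)^2+8k-4d_i}}{2},
\]
which is decreasing in $d_i$ and, at the largest even value of $d_i$ below $k$, equals exactly $T_k$: that value is $d_i=k-2$ when $k$ is even (yielding $\sqrt{k^2+12}$) and $d_i=k-1$ when $k$ is odd (yielding $\sqrt{k^2+8}$).

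The main obstacle is precisely this last step: recognizing that $|C_i|=k+1$ is the only regime where interlacing and the average-degree estimate fail, proving the component is then forced to be $K_{k+1}-M$, and tracking the parity constraint on $d_i$ that separates the $\sqrt{k^2+8}$ and $\sqrt{k^2+12}$ thresholds. Once $\lambda_1(G[C_i])\ge T_k$ is established in both regimes, applying it to two of the components (each of which has $d_i\le k-1$ since $t\ge 2$) together with the interlacing inequality yields $\lambda_2(G)\ge T_k$, completing the contrapositive.
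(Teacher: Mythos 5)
Your proof is correct. Note that the paper under review does not actually prove Theorem~\ref{thm:2-conn} --- it is imported from \cite{CiGu16} --- and your argument is essentially the route of that original reference: pass to the contrapositive, apply Cauchy interlacing to the union of two components of $G-v$ to get $\lambda_2(G)\ge\min_i\lambda_1(G[C_i])$, handle $|C_i|\ge k+2$ by the average-degree bound together with the handshake parity constraint on $d_i$, and handle the tight case $|C_i|=k+1$ by showing the component is forced to be $K_{k+1}$ minus a perfect matching on the $d_i$ attachment vertices, whose equitable-quotient Perron root $\frac{(k-2)+\sqrt{(k+2)^2-4d_i}}{2}$ produces exactly the two thresholds $\frac{k-2+\sqrt{k^2+12}}{2}$ (for $d_i=k-2$, $k$ even) and $\frac{k-2+\sqrt{k^2+8}}{2}$ (for $d_i=k-1$, $k$ odd).
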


From the above results, we can obtain the following connectivity properties for Ramanujan graphs.
\begin{proposition}\label{pro:edgeconn}
Let $G$ be a $k$-regular Ramanujan graphs with $n$ vertices.
\begin{enumerate}[topsep=1pt,itemsep=0pt,leftmargin=9mm]
    \item\label{it:edgeconn:k6} If $k\ge 6$, then $G$ is $k$-edge-connected.
    \item\label{it:edgeconn:k5} If $k= 5$, then $G$ is 4-edge-connected.
    \item\label{it:edgeconn:k4} If $k=4$ and $n\ge 20$ (or $n\le 9$), then $G$ is 4-edge-connected.
    \item\label{it:edgeconn:k3} If $k\ge 4$, then $G$ is $2$-connected.
\end{enumerate}
\end{proposition}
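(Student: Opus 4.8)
The plan is to prove each of the four parts by invoking the spectral connectivity results stated earlier in the excerpt, using the defining Ramanujan bound $\lambda_2(G)\le 2\sqrt{k-1}$ as the main input. For each fixed value of $k$ the verification reduces to checking a numerical inequality, so the whole proposition is really a sequence of short constant-chasing arguments rather than anything structurally deep.

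For part (\ref{it:edgeconn:k6}), I would apply the edge-connectivity criterion of Theorem~\ref{thm:ecn} with $\ell=k$. Since $G$ is Ramanujan we have $\lambda_2(G)\le 2\sqrt{k-1}$, so it suffices to check that $2\sqrt{k-1}< k-\frac{2(k-1)}{k+1}$ for every $k\ge 6$. This is a single-variable inequality, and I would verify it at $k=6$ and then argue it persists for larger $k$ (the gap $k-2\sqrt{k-1}$ grows while the correction term $\tfrac{2(k-1)}{k+1}$ stays below $2$). For part (\ref{it:edgeconn:k5}) I set $k=5$, $\ell=4$, and again use Theorem~\ref{thm:ecn}: it is enough that $2\sqrt{4}=4\le 5-\frac{3(n)}{6(n-6)}$ for the relevant range of $n$, or more simply that $\lambda_2<5-\frac{2\cdot 3}{6}=4$, which holds since $\lambda_2\le 2\sqrt 4=4$ forces careful attention to the non-strict case, handled via the sharper bound $\lambda_2\le k-\frac{(\ell-1)n}{(k+1)(n-k-1)}$.

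Part (\ref{it:edgeconn:k4}) is the one requiring the most care because the naive bound is not strong enough, which is why the hypothesis splits into $n\ge 20$ and $n\le 9$. Here I would use the first (sharper) inequality of Theorem~\ref{thm:ecn}, namely $\lambda_2(G)\le k-\frac{(\ell-1)n}{(k+1)(n-k-1)}$ with $k=4,\ell=4$, so that $4$-edge-connectivity follows once $2\sqrt{3}\le 4-\frac{3n}{5(n-5)}$. I would solve this for $n$ to obtain the threshold $n\ge 20$, and separately dispatch the small cases $n\le 9$ via the Remark (every $4$-regular graph on fewer than $2k+2=10$ vertices is $4$-edge-connected). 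Finally, part (\ref{it:edgeconn:k3}) follows from Theorem~\ref{thm:2-conn}: I would check that for $k\ge 4$ the Ramanujan bound $2\sqrt{k-1}$ lies below the stated threshold $\tfrac{k-2+\sqrt{k^2+8}}{2}$ (odd $k$) or $\tfrac{k-2+\sqrt{k^2+12}}{2}$ (even $k$), giving $2$-connectivity; this is a clean comparison since both thresholds exceed $2\sqrt{k-1}$ for all $k\ge 4$.

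The main obstacle I anticipate is the boundary behaviour in parts (\ref{it:edgeconn:k5}) and (\ref{it:edgeconn:k4}), where the coarse bound $\lambda_2\le 2\sqrt{k-1}$ is tight or nearly tight against the required threshold. In those cases the weak form $\lambda_2<k-\frac{2(\ell-1)}{k+1}$ of Theorem~\ref{thm:ecn} is too lossy, and I must fall back on the exact form involving $n$, which is precisely what produces the numerical cutoff $n\ge 329$-style thresholds (here $n\ge 20$). I would be careful to confirm that the Ramanujan inequality is genuinely strict whenever needed, or else explicitly use the non-strict version of the edge-connectivity theorem; the small-order cases are then handled uniformly by the counting argument in the Remark.
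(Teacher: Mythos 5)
Your parts (i), (ii) and (iv) track the paper's proof closely: Theorem~\ref{thm:ecn} with $\ell=k$ for $k\ge 6$; the sharper non-strict bound with $k=5$, $\ell=4$ (valid exactly when $n\ge 12$) combined with the Remark for $n<12$; and Theorem~\ref{thm:2-conn} for part (iv). However, your part (iii) contains a genuine error. With $k=4$ and $\ell=4$, the bound of Theorem~\ref{thm:ecn} requires
\[
\lambda_2(G)\;\le\; 4-\frac{3n}{5(n-5)},
\]
and since $\frac{3n}{5(n-5)}>\frac{3}{5}$ for every $n>5$, the right-hand side is strictly below $3.4$ for all $n$, whereas $2\sqrt{3}\approx 3.464$. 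So the inequality $2\sqrt{3}\le 4-\frac{3n}{5(n-5)}$ that you propose to ``solve for $n$'' has \emph{no} solutions: no threshold $n\ge 20$ emerges from it, and Theorem~\ref{thm:ecn} can never directly certify $4$-edge-connectivity of a $4$-regular Ramanujan graph, because the spectral bound with $\ell=k$ is asymptotically too weak in this borderline case.

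The idea you are missing is a parity upgrade. The paper instead applies Theorem~\ref{thm:ecn} with $\ell=3$: the condition $2\sqrt{3}< 4-\frac{2n}{5(n-5)}$ does hold precisely when $n\ge 20$, which yields $3$-edge-connectivity. It then invokes the fact that the edge-connectivity of a $k$-regular graph with $k$ even is itself even (Lemma 3.1 of \cite{Cio10}), so a $3$-edge-connected $4$-regular graph is automatically $4$-edge-connected. Without this parity step (or some substitute for it), your argument for part (iii) cannot be completed; your disposal of the small cases $n\le 9$ via the Remark is fine as stated.
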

\begin{proof}
\ref{it:edgeconn:k6} It is not hard to check $2\sqrt{k-1} < k -\frac{2(k -1)}{k+1}$ when $k\ge 6$. Now it follows easily from Theorem~\ref{thm:ecn} with $\ell =k$.

\ref{it:edgeconn:k5} It is not hard to check $\lambda_2(G)\le 2\sqrt{k-1} \le k -\frac{(\ell -1)n}{(k+1)(n-k-1)}$ for $k=5,\ell =4$ and $n\ge 12$. By Theorem~\ref{thm:ecn} (for $n\ge 12$) and its remark (for $n<12$), $G$ is 4-edge-connected.

\ref{it:edgeconn:k4} For 4-regular Ramanujan graphs with $n\ge 20$ vertices, it is easy to check $\lambda_2 \le 2\sqrt{3} < 4 -\frac{(3 -1)n}{5(n-5)}$, and thus is 3-edge-connected by Theorem~\ref{thm:ecn}. However, we know that for $k$-regular graphs, if $k$ is even, then the edge connectivity is also even (see \cite[Lemma 3.1]{Cio10} for a proof). Thus $G$ is 4-edge-connected.
If $n\le 9$ then, by the remark of Theorem~\ref{thm:ecn}, $G$ is 4-edge-connected.

\ref{it:edgeconn:k3} This follows directly from Theorem~\ref{thm:2-conn}.
\end{proof}

We finish the section by briefly discussing some properties of the following graph operation. Denote by $tG$ the multigraph obtained from $G$ by replacing every edge with $t$ parallel edges. Conveniently, the fractional spanning tree packing number respects this ``scalar multiplication'' operation.
\begin{lemma}[{\cite[Lemma 1]{LaLa91}}]
\label{thm:eta}
$\eta(tG) = t\eta(G)$.
\end{lemma}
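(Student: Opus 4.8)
The plan is to establish the two inequalities $\eta(tG) \le t\,\eta(G)$ and $\eta(tG) \ge t\,\eta(G)$ separately, exploiting the natural correspondence between edge subsets of $tG$ and edge subsets of $G$. Throughout, for an edge subset $X \subseteq E(G)$, I write $tX$ for the subset of $E(tG)$ consisting of all $t$ parallel copies of each edge of $X$, so that $|tX| = t|X|$.

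For the upper bound, I would take an edge subset $X \subseteq E(G)$ witnessing the strength, that is, with $G-X$ disconnected and $\tfrac{|X|}{c(G-X)-1} = \eta(G)$. Deleting a complete bundle of $t$ parallel edges in $tG$ has exactly the same effect on connectivity as deleting the corresponding single edge in $G$, so $c(tG - tX) = c(G-X)$. Since $tG - tX$ is therefore disconnected, $tX$ is admissible in the minimization defining $\eta(tG)$, and we obtain $\eta(tG) \le \tfrac{|tX|}{c(tG - tX)-1} = \tfrac{t|X|}{c(G-X)-1} = t\,\eta(G)$.

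For the lower bound, which is the more delicate direction, I would take an arbitrary $X' \subseteq E(tG)$ with $tG - X'$ disconnected and show it can be replaced by a full-bundle subset without increasing the defining ratio. The key observation is that parallel edges are redundant for connectivity: the endpoints of an edge $e$ of $G$ remain joined in $tG - X'$ as soon as at least one of the $t$ copies of $e$ survives. Defining $X := \{\, e \in E(G) : \text{all } t \text{ copies of } e \text{ lie in } X' \,\}$, the two graphs $tG - X'$ and $G - X$ then have identical component structures, so $c(tG - X') = c(G - X)$; and since $X'$ contains every copy of each edge of $X$, we have $|X'| \ge t|X|$. Because $G - X$ has the same number of components as $tG - X'$ it is disconnected, hence admissible for $\eta(G)$, giving $\tfrac{|X'|}{c(tG - X')-1} \ge \tfrac{t|X|}{c(G-X)-1} \ge t\,\eta(G)$. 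Minimizing over all $X'$ yields $\eta(tG) \ge t\,\eta(G)$, and combining the two bounds completes the proof.

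I expect the main obstacle to be the lower bound reduction to full bundles—specifically, the verification that the component structures of $tG - X'$ and $G - X$ genuinely coincide. This is conceptually the crux: it encodes the fact that partially deleting a bundle of parallel edges wastes deletions without improving disconnection, so the extremal subset for $tG$ may always be assumed to consist of complete bundles.
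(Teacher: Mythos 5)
Your proof is correct. There is, however, nothing in the paper to compare it against: the lemma is stated with a citation to \cite[Lemma 1]{LaLa91} and no proof is reproduced, so your argument serves as a self-contained elementary replacement for that reference. Both directions of your argument check out. The upper bound via full bundles $tX$ is immediate from $c(tG-tX)=c(G-X)$. For the lower bound, the step you flag as the crux does hold: with $X=\{e\in E(G): \text{all $t$ copies of $e$ lie in } X'\}$, two vertices are joined by an edge of $tG-X'$ if and only if some copy of an edge between them survives, which happens if and only if they are adjacent in $G-X$; hence $tG-X'$ and $G-X$ have literally the same components, so $c(tG-X')=c(G-X)$. This equality also quietly handles admissibility: since $tG-X'$ is disconnected, so is $G-X$, which in particular forces $X\neq\emptyset$ (as $G$ is connected) and makes $X$ a legitimate competitor in the minimum defining $\eta(G)$. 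Combined with $|X'|\ge t|X|$ this gives $\eta(tG)\ge t\eta(G)$, completing the proof.
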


By combining Lemma \ref{thm:eta} with the results of this section, we obtain the following result.

\begin{lemma}\label{lem:tgst}
Let $G$ be a graph with minimum degree $\delta > 2s/t$.
If $\mu_2(G) > \frac{2s}{t(\delta+1)}$, then $tG -e$ has at least $s$ edge-disjoint spanning trees for every $e\in E(tG)$.
\end{lemma}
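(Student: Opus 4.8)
The plan is to connect the hypotheses on $G$ to the fractional spanning tree packing number of the scalar multiple $tG$, and then apply Corollary~\ref{thm:gminus}. By that corollary, the conclusion that $tG - e$ has at least $s$ edge-disjoint spanning trees for every edge $e$ is equivalent to the statement $\eta(tG) > s$. So the entire task reduces to verifying the single inequality $\eta(tG) > s$ under the given spectral and degree hypotheses.

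First I would invoke Theorem~\ref{thm:frac}, the fractional spanning tree packing result of Hong et al. That theorem states that if $G$ has minimum degree $\delta \ge 2s/t$ and $\mu_2(G) > \frac{2s-1}{t(\delta+1)}$, then $\eta(G) \ge s/t$. Here I need to be slightly careful about which value of $s$ I feed into the theorem, since I want a strict inequality $\eta(tG) > s$ rather than a non-strict one. The hypothesis of Lemma~\ref{lem:tgst} gives the strict bounds $\delta > 2s/t$ and $\mu_2(G) > \frac{2s}{t(\delta+1)}$. Because these are strict and slightly stronger than what Theorem~\ref{thm:frac} literally requires, I would apply the theorem with the fraction $s/t$ replaced by a marginally larger fraction $s'/t'$ (equivalently, exploit that $\frac{2s}{t(\delta+1)} > \frac{2s-1}{t(\delta+1)}$ together with $\delta > 2s/t \ge 2s/t$) to conclude the \emph{strict} bound $\eta(G) > s/t$.

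Next I would apply Lemma~\ref{thm:eta}, the scalar-multiplication identity $\eta(tG) = t\,\eta(G)$. Combining this with $\eta(G) > s/t$ gives
\begin{align*}
    \eta(tG) = t\,\eta(G) > t \cdot \frac{s}{t} = s,
\end{align*}
which is exactly what is needed. Finally, Corollary~\ref{thm:gminus} applied to the connected multigraph $tG$ (note $tG$ is connected whenever $G$ is, and the degree hypothesis forces $G$ to be connected) translates $\eta(tG) > s$ into the assertion that $tG - e$ has $s$ edge-disjoint spanning trees for every $e \in E(tG)$.

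The main obstacle I anticipate is purely bookkeeping around the strict-versus-non-strict inequalities: Theorem~\ref{thm:frac} is phrased with $2s-1$ in the numerator and yields $\eta(G) \ge s/t$, whereas the lemma wants the clean strict conclusion $\eta(tG) > s$, so I must argue that the strict hypotheses of the lemma give enough slack to upgrade the non-strict conclusion. One clean way to handle this is to observe that $\frac{2s}{t(\delta+1)} > \frac{2s-1}{t(\delta+1)}$ makes the spectral hypothesis strictly stronger than required, and that applying Theorem~\ref{thm:frac} with a value fractionally above $s/t$ (justified by the strict degree inequality $\delta > 2s/t$) yields $\eta(G) > s/t$; this is where I would spend the small amount of care the proof requires, as the rest is a direct chaining of the three cited results.
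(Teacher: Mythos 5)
Your proposal is correct and takes essentially the same route as the paper's own proof: reduce via Corollary~\ref{thm:gminus} to showing $\eta(tG) > s$, apply Theorem~\ref{thm:frac} to a fraction slightly larger than $s/t$ (using the strictness of both the degree and spectral hypotheses), and finish with the scaling identity $\eta(tG) = t\,\eta(G)$ from Lemma~\ref{thm:eta}. The paper formalizes your ``marginally larger fraction'' step by fixing a small rational $\varepsilon > 0$ with $\mu_2(G) > \frac{2(s+\varepsilon)}{t(\delta+1)}$ and writing $\frac{s'}{t'} = \frac{s+\varepsilon}{t}$, which is exactly the bookkeeping you describe.
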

\begin{proof}
By Corollary~\ref{thm:gminus}, we need to show that $\eta(tG)>s$. 
Since $\mu_2(G) > \frac{2s}{t(\delta+1)}$, we can choose a sufficient small rational number $\varepsilon >0$ such that $\mu_2(G) >\frac{2(s+\varepsilon )}{t(\delta+1)}$. Let $s', t'$ be positive integers such that $\frac{s'}{t'} =\frac{s+\varepsilon}{t}$. Then $\mu_2(G) >\frac{2(s+\varepsilon )}{t(\delta+1)} =\frac{2s'}{t'(\delta+1)}$ and by Theorem~\ref{thm:frac}, we have $\eta(G)\ge \frac{s'}{t'} =\frac{s+\varepsilon}{t}$. By Lemma~\ref{thm:eta}, $\eta(tG) = t\eta(G)\ge s+\varepsilon >s$.
\end{proof}

\section{Sufficient conditions for other types of framework rigidity}
\label{sec: others}
In this section, we study different types of framework rigidity, including body-bar rigidity, body-hinge rigidity, and rigidity on surfaces of revolution. 

\subsection{Body-and-bar rigidity}
We begin by studying \textbf{body-and-bar frameworks} in $\mathbb{R}^d$, i.e., frameworks of $d$-dimension rigid bodies that are connected by fixed-length bars attached at points of their surfaces; see \cite{Tay84} for more details. Informally, we say a multigraph $G$ is \textbf{body-bar rigid in $\mathbb{R}^d$} if there exists a generic rigid body-bar framework in $\mathbb{R}^d$, and \textbf{body-bar globally rigid in $\mathbb{R}^d$} if there exists a generic globally rigid body-bar framework in $\mathbb{R}^d$. Since any two vertices connected by $\frac{d(d+1)}{2}$ edges can be considered to be the same rigid body, we make the assumption that the multiplicity of our graphs is less than $\frac{d(d+1)}{2}$. Instead of rigorous definitions, we may instead characterize these two combinatorial properties exactly by the following results.

\begin{theorem}[\cite{Tay84}]
A multigraph $G$ is body-bar rigid in $\mathbb{R}^d$ if and only if it contains $\frac{d(d+1)}{2}$ edge-disjoint spanning trees.
\end{theorem}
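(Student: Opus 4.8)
The plan is to replace the analytic definition of body-bar rigidity by an equivalent rank condition on a rigidity matrix, and then to identify the resulting linear matroid with an $N$-fold union of the graphic matroid of $G$, where $N=\frac{d(d+1)}{2}=\dim\mathfrak{se}(d)$ is the dimension of the group of rigid motions of $\mathbb{R}^d$. A body-bar framework on $n$ bodies has a configuration space of dimension $Nn$, since the infinitesimal motions of body $i$ form a copy of $\mathfrak{se}(d)$; each bar imposes one linear length-preservation constraint, and collecting these gives a rigidity matrix $R$ with $|E(G)|$ rows and $Nn$ columns, naturally partitioned into $n$ blocks of size $N$. The trivial (global) infinitesimal motions form an $N$-dimensional subspace, so the framework is infinitesimally rigid exactly when $\operatorname{rank} R = N(n-1)$. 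By the body-bar analogue of the Asimow--Roth principle \cite{AsRo78}, a generic framework is rigid if and only if it is infinitesimally rigid, so body-bar rigidity of $G$ depends only on the generic rank of $R$.

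Next I would unpack the structure of $R$. Writing the velocity of a point $x$ on body $i$ as $S_i x + t_i$ with $S_i$ skew-symmetric, the constraint for a bar $e=ij$ attached at $p$ on body $i$ and $q$ on body $j$ is $(v_i(p)-v_j(q))\cdot(q-p)=0$, which is linear in $(S_i,t_i)$ and $(S_j,t_j)$. Because $p$ and $q$ both lie on the line carrying the bar, the coefficient functionals on the two bodies agree up to sign and equal a single vector $\phi_e\in\mathbb{R}^N$, namely the screw (Plücker) coordinate of that line. Thus the row of $R$ for $e$ has $\phi_e$ in block $i$, $-\phi_e$ in block $j$, and zeros elsewhere: exactly the incidence pattern defining the graphic matroid, but lifted from scalars to $\mathbb{R}^N$.

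The heart of the argument is to show that, for a generic configuration, the linear matroid of the rows of $R$ coincides with the $N$-fold matroid union $\bigvee^N M(G)$ of the cycle matroid $M(G)$ of $G$. The counting direction is routine: any subgraph on $|V'|$ vertices carrying more than $N(|V'|-1)$ bars forces a linear dependence among the corresponding rows, so generically independent edge sets are exactly the $(N,N)$-sparse ones, which by matroid union / arboricity are precisely the independent sets of $\bigvee^N M(G)$. The reverse inequality --- that every edge set decomposable into $N$ forests yields linearly independent rows for a generic framework --- is where I expect the main obstacle to lie, because for $d\geq 3$ the vectors $\phi_e$ are not free in $\mathbb{R}^N$ but are confined to the Plücker variety, so one cannot simply invoke genericity of the $\phi_e$ in $\mathbb{R}^N$. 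I would handle this by exhibiting a single nondegenerate body-bar configuration that attains the full union rank --- for instance by induction on the edges, adding one bar at a time along a forest decomposition and verifying that each new row leaves the current span --- and then noting that the maximal rank is attained on a Zariski-open set, so the identification holds for generic frameworks.

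Finally I would combine this identification with the packing theorem. The union $\bigvee^N M(G)$ has rank $N(n-1)$ if and only if $M(G)$ contains $N$ pairwise disjoint bases, i.e.\ $G$ contains $N=\frac{d(d+1)}{2}$ edge-disjoint spanning trees; this is precisely the Nash-Williams--Tutte spanning tree packing theorem (Theorem~\ref{NaTu}). Chaining the equivalences, $G$ is body-bar rigid in $\mathbb{R}^d$ if and only if the generic rank of $R$ is $N(n-1)$, if and only if $\bigvee^N M(G)$ has rank $N(n-1)$, if and only if $G$ contains $\frac{d(d+1)}{2}$ edge-disjoint spanning trees, which is the assertion of the theorem.
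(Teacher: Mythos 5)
First, a point of comparison: the paper does not prove this statement at all. It is Tay's theorem, quoted from \cite{Tay84} and used as a black box (the authors explicitly say they state the characterization ``instead of rigorous definitions''). So there is no in-paper argument to measure yours against; what you have written is a reconstruction of the proof from the literature, and your route --- identify the generic body-bar rigidity matroid with the $N$-fold union of the cycle matroid, $N=\binom{d+1}{2}$, then invoke Nash-Williams--Tutte (Theorem~\ref{NaTu}) --- is indeed the standard one, going back to Tay and streamlined by Whiteley. Your reduction to generic infinitesimal rigidity, the block structure of the rigidity matrix with a single Pl\"ucker vector $\phi_e$ per row, and the counting (``only if'') direction are all correct.

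The gap is the one you yourself flag, and your proposed fix does not yet fill it. Showing that an edge set decomposable into $N$ forests has generically independent rows cannot be done by an unexamined induction ``adding one bar at a time and verifying that each new row leaves the current span'': that verification is precisely the content of the theorem, and you have no description of the current span to work with, exactly because the $\phi_e$ are confined to the Pl\"ucker variety. The standard way to close this is a concrete one-shot construction rather than induction. The $N$ coordinate $2$-extensors $e_i\wedge e_j$, $1\le i<j\le d+1$, are decomposable, hence are Pl\"ucker coordinates of actual lines. Given a decomposition of $E$ into forests $F_1,\dots,F_N$, realize every bar of $F_k$ along the $k$-th coordinate line. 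In any vanishing linear combination of rows, the block of each body reads $\sum_k \bigl(\sum_{e\ni i,\, e\in F_k} \pm c_e\bigr) e_{i_k}\wedge e_{j_k} = 0$, and linear independence of the coordinate $2$-extensors splits this into $N$ separate relations; the $k$-th relation says $c|_{F_k}$ is a circulation on the forest $F_k$, hence zero. So this single (highly degenerate) configuration attains rank $\sum_k |F_k|$, and since the rank is maximized on a Zariski-open set, generic configurations attain it too. With that substitution in place of your inductive step, your argument is complete and agrees with the published proof.
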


\begin{theorem}[\cite{ConnJorW13}]
A multigraph $G$ is body-bar globally rigid in $\mathbb{R}^d$ if and only if it is redundantly rigid in $\mathbb{R}^d$, i.e.,~$G-e$ is body-bar rigid in $\mathbb{R}^d$ for all $e \in E$.
\end{theorem}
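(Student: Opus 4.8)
The plan is to prove the two implications separately. Throughout, write $D := \frac{d(d+1)}{2}$ for the dimension of the group of rigid motions of $\mathbb{R}^d$, and recall from Tay's theorem above that ``$G$ is body-bar rigid in $\mathbb{R}^d$'' is equivalent to the combinatorial condition that $G$ contains $D$ edge-disjoint spanning trees; thus ``redundantly rigid'' means $G-e$ contains $D$ edge-disjoint spanning trees for every $e \in E$. For the necessity direction (global rigidity $\Rightarrow$ redundant rigidity), which is the easier one, I would follow the scheme of Hendrickson's theorem \cite{Hendrickson92} transported to body-bar frameworks. Suppose some $G-e$ is not body-bar rigid, with $e=uv$. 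A generic body-bar realization of $G-e$ then admits a nontrivial analytic flex; since $G$ itself is rigid, this flex must change the bar-length constraint associated with $e$, for otherwise its initial velocity would be a nontrivial infinitesimal motion of the whole framework $G$, contradicting the infinitesimal rigidity of the generic rigid $G$. Running Hendrickson's global argument on the connected positive-dimensional solution set of $G-e$ then produces a realization $q$ that is not congruent to $p$ yet assigns $e$ the same length, hence is equivalent to $p$ as a realization of $G$, contradicting global rigidity.

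The substantive direction is sufficiency (redundant rigidity $\Rightarrow$ global rigidity), which I would handle with the stress-matrix machinery, following the template of Gortler--Thurston--Healey \cite{GortlerThurstonHealey10} and Connelly for bar-joint frameworks. The first step is to set up the body-bar rigidity matrix: encode each body by its $D$-dimensional space of infinitesimal screw motions, so that each bar contributes one linear equation and the rigidity matrix has $nD$ columns (grouped into $n$ blocks of size $D$). An equilibrium stress is a vector $\omega$ in its cokernel, and from $\omega$ one builds the induced symmetric block-Laplacian stress matrix $\Omega$. The second step is the body-bar analogue of the Gortler--Thurston--Healey/Connelly criterion: a generic body-bar framework is globally rigid if and only if it admits an equilibrium stress whose stress matrix $\Omega$ has maximal rank, equivalently whose kernel has the minimal possible dimension, spanned precisely by the trivial motions together with the affine part. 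With this criterion in hand, the theorem reduces to the purely algebraic-combinatorial statement that a generic redundantly rigid body-bar framework carries a maximal-rank stress matrix.

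The main obstacle is exactly this last reduction. Redundant rigidity gives, for each edge $e$, a nonzero equilibrium stress supported on $e$ (every edge lies in a circuit of the body-bar rigidity matroid), but it does not immediately hand over a single stress whose matrix has maximal rank. Here I would exploit the special structure of the body-bar rigidity matroid, which by Tay is the union of $D$ copies of the cycle matroid $M(G)$; redundant rigidity then says exactly that this union has no coloops. Since the maximum rank attained by $\Omega$ over all equilibrium stresses is achieved on a Zariski-open set (the submaximal stresses form a proper algebraic subset), it suffices to produce one configuration and one stress realizing the full target rank and then pass to a generic point; establishing that this target rank really is attained is where the edge-disjoint-spanning-tree redundancy must be used quantitatively, and is the crux of the argument. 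An alternative route I would keep in reserve is inductive: find a Henneberg-type construction generating all redundantly rigid body-bar graphs from small globally rigid base cases, and verify that each construction move preserves generic global rigidity (the effect of such moves on stress matrices is controllable). This trades the rank computation for a combinatorial construction theorem, but either way the decisive step is promoting ``every edge supports a stress'' to ``some single stress has maximal rank.''
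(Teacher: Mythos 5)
First, a point of order: the paper does not prove this statement at all. It is quoted from Connelly, Jord\'an and Whiteley \cite{ConnJorW13} as a known characterization and then used as a black box (e.g., in Corollary~\ref{cor:bbramgr}). So the only question is whether your sketch would stand on its own as a proof of that cited theorem, and it would not: it is a correct identification of the proof strategy that the literature actually follows (Hendrickson-style necessity, stress-matrix sufficiency), but at the two places where the real mathematical work lies, you assert rather than prove.

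Concretely: (1) your sufficiency direction rests on ``the body-bar analogue of the Gortler--Thurston--Healey/Connelly criterion,'' which you state as a known tool. It is not; the stress-matrix characterization of \cite{GortlerThurstonHealey10} is proved for generic bar-joint frameworks, and transferring it to the body-bar setting --- where a body is a $\binom{d+1}{2}$-dimensional block of screw coordinates, or alternatively a bar-joint cluster whose attachment points must be kept generic --- is itself a substantial part of what Connelly, Jord\'an and Whiteley had to do. (2) Even granting that criterion, you explicitly leave the decisive step open: passing from ``every edge lies in a circuit of the $\binom{d+1}{2}$-fold union of cycle matroids'' to ``there exists a single equilibrium stress whose stress matrix has maximal rank.'' You name this the crux, propose two possible routes (a Zariski-openness rank argument, or a Henneberg-type induction), and carry out neither: no target rank is computed, no construction moves or base cases are given, and the quantitative use of the spanning-tree redundancy is never made. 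A proof whose central step is labelled as the main obstacle and then not executed has its gap exactly there. (Your necessity direction is also only schematic --- Hendrickson's argument needs the compact configuration-space and regular-value analysis redone for bodies --- but that direction is comparatively routine; the sufficiency direction is where the proposal stops short of being a proof.)
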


For the following, we set $\ell := \max \left\{ \left\lceil (\delta +1)/m\right\rceil, 2 \right\}$, where $\delta$ is the minimum degree and $m$ is the multiplicity of the graph in question. We notice that if $\delta \geq d(d+1)$ and $m < \binom{d+1}{2} = \frac{d(d+1)}{2}$, then $\ell \geq 3$.

\begin{corollary}\label{cor:bbramrig}
Let $G$ be a $k$-regular Ramanujan multigraph with $k \geq d(d+1)$ for $d \geq 2$ and multiplicity $m < \frac{d(d+1)}{2}$. Then $G$ is body-bar rigid in $\mathbb{R}^d$.
\end{corollary}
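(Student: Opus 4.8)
The plan is to invoke the Taylor characterization (Theorem~\cite{Tay84}) and reduce the claim to showing that $G$ contains $\frac{d(d+1)}{2}$ edge-disjoint spanning trees. By the spanning tree packing theorem (Theorem~\ref{NaTu}), equivalently by the strength formulation, it suffices to produce enough edge-disjoint trees, and the natural tool here is the spectral multigraph result of Gu (Theorem~\ref{thm:mgac}), which guarantees $k$ edge-disjoint spanning trees whenever $\mu_2(G) > \frac{2k-1}{\ell}$ for the appropriate $\ell$. So first I would set the target $s := \frac{d(d+1)}{2}$, identify the minimum degree $\delta = k$ (the graph is $k$-regular), and note that the hypothesis $k \geq d(d+1) = 2s$ is exactly the degree condition $\delta \geq 2s$ required by Theorem~\ref{thm:mgac}.

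Next I would pin down the value of $\ell := \max\{\lceil(\delta+1)/m\rceil, 2\}$. Since $m < \frac{d(d+1)}{2} = s$ and $\delta = k \geq d(d+1) = 2s$, the observation recorded just before the corollary statement gives $\ell \geq 3$. The core of the proof is then the chain of inequalities showing
\begin{align*}
	\mu_2(G) > \frac{2s - 1}{\ell}.
\end{align*}
Since $G$ is $k$-regular Ramanujan, I would use $\mu_2(G) \geq k - 2\sqrt{k-1}$, so it is enough to verify
\begin{align*}
	k - 2\sqrt{k-1} > \frac{d(d+1) - 1}{\ell}.
\end{align*}
With $\ell \geq 3$ and $k \geq d(d+1)$, the right-hand side is at most $\frac{d(d+1)-1}{3}$, and one checks that $k - 2\sqrt{k-1}$ dominates this for $k \geq d(d+1) \geq 6$ (the case $d = 2$). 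This is the one genuine computation in the argument, and the growth of $k - 2\sqrt{k-1}$ versus the linear-in-$k$ lower bound $k$ on the right makes it comfortable for all $d \geq 2$.

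The main obstacle I anticipate is not the spectral inequality itself but making sure the bound $\ell \geq 3$ is used correctly: the denominator in Gu's theorem is $\ell$, not $\delta + 1$, so I must confirm that the multiplicity bound $m < s$ together with $\delta \geq 2s$ really forces $\ell \geq 3$ rather than $\ell = 2$, since with $\ell = 2$ the target $\frac{2s-1}{2}$ could be as large as $k - \tfrac{1}{2}$, which $\mu_2$ would fail to exceed. Once $\ell \geq 3$ is secured, the inequality $k - 2\sqrt{k-1} > \frac{2s-1}{3}$ closes the argument, and Theorem~\ref{thm:mgac} yields the $s = \frac{d(d+1)}{2}$ edge-disjoint spanning trees, whence Taylor's theorem gives body-bar rigidity of $G$ in $\mathbb{R}^d$.
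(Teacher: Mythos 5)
Your approach is the same as the paper's: reduce, via the characterization of \cite{Tay84}, to packing $s=\frac{d(d+1)}{2}$ edge-disjoint spanning trees, and obtain these from Theorem~\ref{thm:mgac} together with the Ramanujan bound $\mu_2(G)\ge k-2\sqrt{k-1}$. The gap is precisely in the one computation you deferred: the inequality $k-2\sqrt{k-1}>\frac{2s-1}{3}$ is \emph{false} in the boundary case $d=2$, $k=6$, which the corollary allows. There $k-2\sqrt{k-1}=6-2\sqrt{5}\approx 1.528$, while $\frac{2s-1}{3}=\frac{5}{3}\approx 1.667$. More precisely, writing $D=d(d+1)$ and taking $k=D$ (the worst case, since $k-2\sqrt{k-1}$ is increasing in $k$), your inequality rearranges to $4D^2-32D+37>0$, whose larger root is $4+\frac{3\sqrt{3}}{2}\approx 6.6$; so it does hold for all $d\ge 3$ (where $D\ge 12$) and even for $d=2$ once $k\ge 7$, but it fails exactly at $(d,k)=(2,6)$. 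Since the Ramanujan hypothesis only guarantees $\mu_2(G)\ge 6-2\sqrt{5}$, your argument with $\ell\ge 3$ genuinely cannot establish the needed bound $\mu_2(G)>\frac{5}{3}$ in that case.

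The repair is what the paper does: treat $d=2$ separately and use the multiplicity hypothesis harder there. For $d=2$, the assumption $m<\frac{d(d+1)}{2}=3$ forces $m\le 2$, and $\delta+1=k+1\ge 7$, so $\ell=\max\{\lceil(\delta+1)/m\rceil,2\}\ge\lceil 7/2\rceil=4$; the target then drops to $\frac{2\cdot 3-1}{4}=\frac{5}{4}$, which $6-2\sqrt{5}$ does exceed. Your uniform bound $\ell\ge 3$ is exactly what the paper uses for $d\ge 3$, where the quadratic above is comfortably positive. It is a small irony that the obstacle you flagged---using $\ell$ rather than $\delta+1$ correctly---is indeed where the proof turns, but the subtlety is not whether $\ell\ge 3$ holds; it is that $\ell\ge 3$ is too weak at $d=2$, $k=6$, and the sharper value $\ell\ge 4$ must be extracted from the multiplicity bound.
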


\begin{proof}
First suppose $d \geq 3$ and fix $D = d(d+1)$. As mentioned previously, $\ell \geq 3$. By Theorem~\ref{thm:mgac}, it suffices to show that
\begin{align*}
    D - 2 \sqrt{D-1}  > \frac{2\left(\frac{d(d+1)}{2} \right)-1}{3} = \frac{D - 1}{3},
\end{align*}
as then $G$ will contain $\frac{d(d+1)}{2}$ edge-disjoint spanning trees. By rearranging we obtain the quadratic inequality $4D^2 -32D + 37 >0$ which holds for all $D \geq 12$, hence $G$ is body-bar rigid in $\mathbb{R}^d$.
    
Now suppose $d=2$. Since $m\leq 2$ we have $\ell \geq 4$. As
\begin{align*}
    \mu_2(G) > 6 - 2 \sqrt{5}  > \frac{5}{4} = \frac{2.3 - 1}{\ell},
\end{align*}
the graph $G$ is body-bar rigid in $\mathbb{R}^2$ by Theorem \ref{thm:mgac}.
\end{proof}

This result can be seen to be the best possible result, since any $k$-regular graph for $k < d(d+1)$ with $n > d(d+1)$ vertices has at most $\left(\frac{d(d+1)-1}{2} \right)n < \left(\frac{d(d+1)}{2} \right)(n-1)$ edges, and thus cannot contain $\frac{d(d+1)}{2}$ edge-disjoint spanning trees.

We finish the section by characterizing the body-bar globally rigid Ramanujan graphs.

\begin{corollary}\label{cor:bbramgr}
Let $G$ be a $k$-regular Ramanujan multigraph with $k \geq d(d+1) +2$ for $d \geq 2$ and multiplicity $m < \frac{d(d+1)}{2}$. Then $G$ is body-bar globally rigid in $\mathbb{R}^d$.
\end{corollary}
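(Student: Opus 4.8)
The plan is to reduce body-bar global rigidity to redundant body-bar rigidity via the characterization of \cite{ConnJorW13}, and then reduce redundant rigidity to an edge-disjoint spanning tree count using the theorem of \cite{Tay84}. By the global rigidity characterization, it suffices to show that $G - e$ is body-bar rigid in $\mathbb{R}^d$ for every $e \in E(G)$, which by \cite{Tay84} is equivalent to $G - e$ containing $\frac{d(d+1)}{2}$ edge-disjoint spanning trees. So the entire statement collapses to verifying the hypothesis of Corollary~\ref{thm:gminus}: it suffices to prove $\eta(G) > \frac{d(d+1)}{2}$, since then $G - e$ contains $\frac{d(d+1)}{2}$ edge-disjoint spanning trees for every edge $e$.

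To get a strict lower bound on $\eta(G)$, I would invoke the fractional spanning tree packing result, Theorem~\ref{thm:frac}. Setting $s = \frac{d(d+1)}{2}$ and $t = 1$, the spectral hypothesis $\mu_2(G) > \frac{2s - 1}{\delta + 1} = \frac{d(d+1) - 1}{k + 1}$ would yield $\eta(G) \geq s$; to obtain the strict inequality $\eta(G) > s$ needed for Corollary~\ref{thm:gminus}, I would apply the $\varepsilon$-perturbation argument already used in the proof of Lemma~\ref{lem:tgst}, choosing a small rational $\varepsilon > 0$ with $\mu_2(G) > \frac{2(s + \varepsilon) - 1}{k + 1}$ and concluding $\eta(G) \geq s + \varepsilon > s$. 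Since $G$ is a $k$-regular Ramanujan multigraph we have $\mu_2(G) = k - \lambda_2(G) \geq k - 2\sqrt{k - 1}$, so the whole argument reduces to the scalar inequality
\begin{align*}
    k - 2\sqrt{k - 1} > \frac{d(d+1) - 1}{k + 1}.
\end{align*}

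The main (and only) real obstacle is verifying this inequality under the hypothesis $k \geq d(d+1) + 2$. The natural approach is to compare against the boundary case $k = d(d+1) + 2$, where $d(d+1) - 1 = k - 3$, so the right-hand side becomes $\frac{k - 3}{k + 1} < 1$. Since $k - 2\sqrt{k - 1} \geq 1$ precisely for $k \geq 4$ (as $k - 1 - 2\sqrt{k - 1} + 1 = (\sqrt{k-1} - 1)^2 \geq 0$ gives $k - 2\sqrt{k-1} \geq 2 - k \cdot 0$, more directly $k - 2\sqrt{k-1} = (\sqrt{k-1}-1)^2 \geq 0$ but one wants the value $\geq 1$), the left-hand side dominates comfortably for all $d \geq 2$. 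More carefully, for $d \geq 2$ we have $k \geq 8$, so $k - 2\sqrt{k-1} \geq 8 - 2\sqrt{7} > 2 > 1 > \frac{k-3}{k+1}$, and for larger $k$ the left-hand side grows like $k$ while the right-hand side stays below $1$, so the inequality only strengthens. I would also note the mild care needed in the multiplicity bound: $m < \frac{d(d+1)}{2}$ is inherited from Corollary~\ref{cor:bbramrig} and guarantees the body-bar framework is well-defined, and the hypotheses $k \geq d(d+1) + 2 > d(d+1)$ ensure the degree condition $\delta = k \geq 2s = d(d+1)$ of Theorem~\ref{thm:frac} holds with room to spare.
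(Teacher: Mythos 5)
Your reduction is sound as far as it goes: body-bar global rigidity reduces to redundant body-bar rigidity by the characterization of \cite{ConnJorW13}, and then to $\eta(G) > \frac{d(d+1)}{2}$ via Corollary~\ref{thm:gminus}, both of which are valid for multigraphs. The gap is in the spectral step. Theorem~\ref{thm:frac} is stated (and true) only for \emph{simple} graphs, but the $G$ of this corollary is a multigraph whose multiplicity may be as large as $\frac{d(d+1)}{2}-1$, and the bound with denominator $t(\delta+1)$ is genuinely false for multigraphs. Concretely, let $H$ be two copies of $K_q$ joined by a bridge (say $q=5$) and let $mH$ be the multigraph obtained by replacing each edge of $H$ with $m$ parallel edges. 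Then $\delta(mH)=m(q-1)$, and $\mu_2(mH)=m\,\mu_2(H)$ grows linearly in $m$, while $\eta(mH)=m\,\eta(H)=m$ by Lemma~\ref{thm:eta}. Taking $s=m+1$ and $t=1$, the degree condition $\delta \geq 2s$ and the spectral condition $\mu_2 > \frac{2s-1}{\delta+1} < \frac{3}{q-1}<1$ both hold once $m$ is large, yet $\eta(mH)=m<s$. Note that the multiplicity-to-degree ratio here is about $\frac{1}{q-1}=\frac14$, well inside the regime $m < \frac{d(d+1)}{2} \leq \frac{k-2}{2}$ allowed by the corollary, so the hypothesis on $m$ does not rescue your application. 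This is precisely why the paper instead invokes Gu's multigraph theorem (Theorem~\ref{thm:mgac}), in which $\delta+1$ must be replaced by the much smaller quantity $\ell=\max\left\{\left\lceil (k+1)/m\right\rceil,2\right\}$; the hypothesis $m<\frac{d(d+1)}{2}$ is then exactly what guarantees $\ell\geq 3$, so it does real spectral work and is not, as you suggest, merely a modeling convention. The paper's proof checks $k-2\sqrt{k-1}\geq (D+2)-2\sqrt{D+1}>\frac{D+1}{3}$ with $D=d(d+1)$, i.e.\ $\mu_2$ must beat roughly $D/3$ rather than your bound of roughly $\frac{D}{k+1}<1$, and this is where the hypothesis $k\geq d(d+1)+2$ earns its keep. (The paper produces $\frac{D}{2}+1$ disjoint spanning trees in $G$ itself, which gives redundance since a deleted edge lies in at most one tree; your route through $\eta(G)>\frac{D}{2}$ would be an equivalent finish if the spectral input were legitimate.)

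A second, smaller slip: even for simple graphs your $\varepsilon$-perturbation is off. Writing $s+\varepsilon=s'/t'$ in lowest terms, Theorem~\ref{thm:frac} requires $\mu_2(G)>\frac{2s'-1}{t'(\delta+1)}=\frac{2(s+\varepsilon)-1/t'}{\delta+1}$, which is strictly stronger than your stated condition $\mu_2(G)>\frac{2(s+\varepsilon)-1}{\delta+1}$ whenever $t'>1$. The repair used in the proof of Lemma~\ref{lem:tgst} is to demand $\mu_2(G)>\frac{2(s+\varepsilon)}{\delta+1}$, dropping the ``$-1$'' entirely, which dominates $\frac{2s'-1}{t'(\delta+1)}$ for every $t'$. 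That fix is routine, but it does not touch the multigraph issue above, which is fatal to the proof as written.
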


\begin{proof}
As mentioned previously, $\max \left\{ \left\lceil (k +1)/m\right\rceil, 2 \right\} \geq 3$. By Theorem \ref{thm:mgac}, it suffices to show that
\begin{align*}
    D+2 - 2\sqrt{D+1}> \frac{2\left(\frac{d(d+1)}{2} + 1 \right)-1}{3} =\frac{D + 1}{3},
\end{align*}
as then $G$ will contain $\frac{d(d+1)}{2}+1$ edge-disjoint spanning trees. By rearranging we obtain the quadratic inequality $4D^2 - 16D - 11 >0$ which holds for $D \geq 6$, hence $G$ is body-bar globally rigid in $\mathbb{R}^d$.
\end{proof}

This result can likewise be seen to be the best possible result, since any $k$-regular graph for $k < d(d+1)+2$ with $n> d(d+1)+2$ vertices has at most $\left(\frac{d(d+1)+1}{2} \right)n < \left(\frac{d(d+1)}{2} + 1 \right)(n-1)$ edges, and hence cannot contain $\frac{d(d+1)}{2} + 1$ edge-disjoint spanning trees.

\subsection{Body-and-hinge rigidity}

In this subsection, we study \textbf{body-and-hinge frameworks} in $\mathbb{R}^d$, i.e., frameworks of $d$-dimension rigid bodies that are connected by $(d-1)$-dimensional hinges that on their surfaces; see \cite{JaJo10} for more details. Unlike body-and-bar frameworks, we restrict ourselves to simple graphs, as two bodies connected by two hinges are essentially the same body.
Informally, we say a graph $G$ is \textbf{body-hinge rigid in $\mathbb{R}^d$} if there exists a generic rigid body-hinge framework in $\mathbb{R}^d$, and \textbf{body-hinge globally rigid in $\mathbb{R}^d$} if there exists a generic globally rigid body-hinge framework in $\mathbb{R}^d$. Instead of rigorous definitions, we may instead characterize these two combinatorial properties exactly by the following results. We recall that for any graph $G$ and any $k \in \mathbb{N}$, the multigraph $kG$ is formed from $G$ by replacing every edge with $k$ parallel copies.

\begin{theorem}[\cite{TayWhiteley82}]\label{thm:bhr}
A graph $G$ is body-hinge rigid in $\mathbb{R}^d$ if and only if $(\binom{d+1}{2}-1)G$ contains $\binom{d+1}{2}$ edge-disjoint spanning trees.
\end{theorem}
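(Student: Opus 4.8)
The statement to be proved is the Tay--Whiteley characterization, so the natural plan is to reduce body-hinge rigidity to body-bar rigidity and then invoke the body-bar spanning-tree criterion already recorded above. Write $D=\binom{d+1}{2}=\dim\mathfrak{se}(d)$ for the number of degrees of freedom of a single rigid body in $\mathbb{R}^d$. The guiding principle is that a hinge removes exactly $D-1$ relative degrees of freedom between its two incident bodies---the same number removed by $D-1$ generic bars---so one expects body-hinge rigidity of $G$ to coincide with body-bar rigidity of the multigraph $(D-1)G$, at which point Tay's body-bar theorem finishes the job.

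First I would set up infinitesimal rigidity algebraically. Assign to each body an infinitesimal motion in $\mathfrak{se}(d)=\mathbb{R}^d\rtimes\mathfrak{so}(d)$, so that a configuration of $|V(G)|$ bodies has motion space $\bigoplus_i\mathfrak{se}(d)$ of dimension $D\,|V(G)|$. The trivial motions form the diagonal copy of $\mathfrak{se}(d)$, of dimension $D$, and hence a framework is infinitesimally rigid precisely when its rigidity matrix has corank $D$. Next I would compute the rank contributed by a single hinge: a hinge preserves exactly a one-parameter family of relative rotations (those about the hinge flat), so dually, in screw (Plücker) coordinates the allowed relative velocities span a single line, and the hinge therefore imposes $D-1$ independent linear constraints on the relative screw velocity of its two bodies.

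The central step---and the main obstacle---is to show that these $D-1$ hinge constraints have the same row space as the rigidity rows of $D-1$ bars placed in the special position determined by the hinge, and moreover that, for a \emph{generic} hinge, this bundle of specially-positioned bars attains the same matroid rank as $D-1$ generic bars. The difficulty is precisely that the bars modelling a hinge lie in very special position (they all meet a common flat), so it is not automatic that they realise the generic body-bar rank; one must rule out a rank drop under this specialization. I would handle this by exhibiting a single explicit placement of the hinges for which the determinantal rank conditions hold---e.g. via a perturbation or algebraic-independence argument showing that the relevant minors do not vanish identically after the hinge substitution---which, by genericity of the framework, forces equality of the two matroids.

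Once the body-hinge rigidity matroid of $G$ is identified with the body-bar rigidity matroid of $(D-1)G$, both directions of the theorem follow simultaneously, since the matroid rank is governed by a purely combinatorial (Nash--Williams/Tutte) count. Concretely, by the body-bar theorem stated above, $(D-1)G$ is body-bar rigid in $\mathbb{R}^d$ if and only if it contains $D$ edge-disjoint spanning trees; hence $G$ is body-hinge rigid in $\mathbb{R}^d$ if and only if $\bigl(\binom{d+1}{2}-1\bigr)G$ contains $\binom{d+1}{2}$ edge-disjoint spanning trees, as claimed.
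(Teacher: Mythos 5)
First, a point of comparison: the paper does not prove this statement at all --- it is imported verbatim from Tay and Whiteley \cite{TayWhiteley82} and used as a black box, so there is no internal proof to measure yours against. Judged on its own terms, your proposal sets up the infinitesimal theory correctly (motion space of dimension $\binom{d+1}{2}|V(G)|$, trivial motions of dimension $D=\binom{d+1}{2}$, each hinge imposing $D-1$ independent constraints on the relative screw of its two bodies), makes the right reduction to body-bar rigidity of $(D-1)G$ followed by Tay's body-bar theorem, and correctly isolates the crux: the $D-1$ bars simulating a hinge all meet a common $(d-2)$-dimensional flat, so they sit in special position, and one must show that this specialization does not drop the rank below the generic body-bar rank of $(D-1)G$.

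But that crux is exactly where your argument stops being a proof. ``Exhibiting a single explicit placement \ldots{} via a perturbation or algebraic-independence argument showing that the relevant minors do not vanish identically'' is a restatement of what must be done, not a way of doing it: the entire mathematical content of the Tay--Whiteley theorem is the construction of some hinge configuration attaining the combinatorially predicted rank, and in the original work this is a genuinely nontrivial argument organized around the decomposition of $(D-1)G$ into $D$ edge-disjoint spanning trees (one builds a framework tree by tree and verifies independence by hand), not a soft genericity observation. Only the easy inequality --- that the hinge rows lie in the span of the rows of $D-1$ bars meeting the hinge flat, so body-hinge rank is at most body-bar rank of $(D-1)G$ --- comes for free. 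That rank behavior under such specializations is delicate is well illustrated by the closely related Molecular Conjecture of Tay and Whiteley, in which the hinges of each body are additionally forced to be concurrent: deciding whether that further specialization preserves rank remained open for over two decades before Katoh and Tanigawa settled it. So your proposal is a correct plan with the correct reduction, but the step it defers is the theorem.
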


\begin{theorem}[\cite{JordKirTan16}]\label{thm:bhgr}
A graph $G$ is body-hinge globally rigid in $\mathbb{R}^d$ if and only if either (i) $d=2$ and $G$ is 3-edge-connected, or (ii) $d \geq 3$ and $(\binom{d+1}{2}-1)G -e$ contains $\binom{d+1}{2}$ edge-disjoint spanning trees for all $e \in E$.
\end{theorem}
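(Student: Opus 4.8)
This is a characterization, so the plan is to prove both implications and to route everything through the body-bar theory already in hand. The bridge is the correspondence underlying Theorem~\ref{thm:bhr}: a body-hinge framework in $\mathbb{R}^d$ is modelled as a body-bar framework by replacing each hinge, a flat shared by two adjacent bodies, with $\binom{d+1}{2}-1$ bars whose endpoints are constrained to lie on that common flat. Under this correspondence the underlying body-bar multigraph is exactly $H := (\binom{d+1}{2}-1)G$, which is precisely why the spanning-tree counts in Theorems~\ref{thm:bhr} and~\ref{thm:bhgr} are stated for $H$. For $d \geq 3$ my plan is to transfer the body-bar global rigidity theorem of \cite{ConnJorW13} --- a body-bar multigraph is globally rigid if and only if it is redundantly body-bar rigid, i.e.\ $H-e$ contains $\binom{d+1}{2}$ edge-disjoint spanning trees for every $e$ --- to the body-hinge setting, so that condition (ii) becomes exactly the redundant body-bar rigidity of $H$.

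For the necessity direction I would argue as in the classical bar-joint theory: global rigidity forces redundant rigidity (a Hendrickson-type obstruction, cf.~\cite{Hendrickson92}), since if some bar $e$ of $H$ failed to be redundant then flexing across the degree of freedom released by deleting $e$ would produce an equivalent but incongruent framework; together with Theorem~\ref{thm:bhr} this forces $H-e$ to contain $\binom{d+1}{2}$ edge-disjoint spanning trees for every $e$. Sufficiency is the substantial direction, and here lies the main obstacle: the body-bar framework produced from a body-hinge framework is never generic, because the $\binom{d+1}{2}-1$ bars of a single hinge are forced to be coplanar (their endpoints share a common flat), so the theorem of \cite{ConnJorW13} cannot be invoked off the shelf. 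The heart of the argument is to show that this coordinated, non-generic bar placement still yields an infinitesimally rigid realization carrying an equilibrium stress of maximal rank, which by the stress-matrix criterion (\cite{GortlerThurstonHealey10}) certifies global rigidity. I would set this up by placing the bodies and hinge-flats generically subject only to the coplanarity constraint, then track the rank of the rigidity matrix and of its stress space under edge deletions, using the matroidal reading of the spanning-tree count in Corollary~\ref{thm:gminus} to ensure that the required redundancy survives the specialization from the generic body-bar matroid to the body-hinge one.

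The case $d=2$ I would treat entirely separately, since the bar model degenerates: a hinge collapses to a single point, and the body-bar redundancy count for $2G$ (equivalently $\eta(2G) > 3$, i.e.~$\eta(G) > 3/2$ by Lemma~\ref{thm:eta} and Corollary~\ref{thm:gminus}) is no longer the governing invariant. Instead I would prove directly that a generic planar body-pin framework is globally rigid exactly when $G$ is $3$-edge-connected, by analysing the finite flexes and partial reflections available to such a structure: any edge cut of size at most two exposes an incongruent equivalent realization obtained by reflecting or re-articulating one side across the cut, while $3$-edge-connectivity precludes every such motion.

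The step I expect to be hardest is the sufficiency proof for $d \geq 3$: controlling the stress space of the forced-coplanar body-bar realization and showing that genericity of the bodies and hinge-flats --- rather than of the individual bar endpoints --- is already enough to certify global rigidity once $H$ is redundantly rigid. By contrast, the necessity direction and the reduction are routine given Theorem~\ref{thm:bhr}, \cite{ConnJorW13} and \cite{Hendrickson92}, and the planar case, though combinatorially delicate, is self-contained.
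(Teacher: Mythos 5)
The first thing to say is that the paper does not prove this statement at all: Theorem~\ref{thm:bhgr} is quoted, with attribution, from Jord\'an, Kir\'aly and Tanigawa \cite{JordKirTan16}, and is used in Section~\ref{sec: others} purely as a black box. So there is no proof in the paper to compare yours against; the relevant comparison is with the original paper, whose content your sketch is in effect attempting to reconstruct.

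Measured against that, your outline identifies the right architecture --- the hinge-to-bar substitution underlying Theorem~\ref{thm:bhr}, a Hendrickson-type argument for necessity, and a stress-matrix certificate in the spirit of \cite{GortlerThurstonHealey10} and \cite{ConnJorW13} for sufficiency --- but what you have written is a plan, not a proof, and the gap sits exactly where you yourself locate it. For $d \geq 3$, the entire difficulty of the theorem is that the body-bar framework obtained from a body-hinge framework is non-generic (the $\binom{d+1}{2}-1$ bars of each hinge are forced into a common flat), so neither the sufficiency theorem of \cite{ConnJorW13} nor even the standard Hendrickson necessity argument applies off the shelf: both require rank statements about the rigidity matrix and the stress space of these special realizations, and ``place the bodies and hinge-flats generically subject to the coplanarity constraint, then track the rank'' is a description of the problem, not a solution to it. Establishing that condition (ii) forces a maximal-rank equilibrium stress at such a non-generic realization is precisely the main content of \cite{JordKirTan16}; nothing in your sketch does that work. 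Note also that your necessity step is less routine than you claim: for a single bar $e$, your $H - e$ with $H := \left(\binom{d+1}{2}-1\right)G$ is no longer of body-hinge type, so even computing the rank of the constraint system after deleting one bar of a hinge requires the same non-generic rank formulas, not just a citation of \cite{Hendrickson92}. The $d=2$ case has the same character: the easy direction is that a $1$- or $2$-edge-cut yields a rotation about a pin or a reflection of one side in the line through two pins (an equivalent, incongruent realization), but the converse --- that $3$-edge-connectivity excludes \emph{all} equivalent incongruent realizations, not merely those arising from such reflections --- is a genuine theorem, and ``analysing the finite flexes and partial reflections'' does not prove it.
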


We observe an interesting quirk of body-hinge frameworks that follows immediately from Theorems \ref{thm:bhr} and \ref{thm:bhgr}.

\begin{corollary}
If $G$ is a graph that is body-hinge $($globally$)$ rigid in $\mathbb{R}^d$, then, for all $D \geq d$, $G$ is body-hinge $($globally$)$ rigid in $\mathbb{R}^D$.
\end{corollary}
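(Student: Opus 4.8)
The plan is to read both characterization theorems closely and exploit the fact that the number of edge-disjoint spanning trees in the relevant multigraph only grows as we pass to a higher dimension. The key structural observation is that the quantity $\binom{d+1}{2}$ appearing in Theorems~\ref{thm:bhr} and~\ref{thm:bhgr} is strictly increasing in $d$, and that scaling a graph by a larger factor $t$ can only increase its fractional spanning tree packing number by Lemma~\ref{thm:eta}. So at the heart of the argument I expect a monotonicity-in-$d$ statement for the condition ``$(\binom{d+1}{2}-1)G$ contains $\binom{d+1}{2}$ edge-disjoint spanning trees.''

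First I would treat the (non-global) rigidity case and reduce, by induction on $D$, to the single inductive step $D \to D+1$. Assume $G$ is body-hinge rigid in $\mathbb{R}^D$, so by Theorem~\ref{thm:bhr} the multigraph $(\binom{D+1}{2}-1)G$ contains $\binom{D+1}{2}$ edge-disjoint spanning trees; by the spanning tree packing theorem (Theorem~\ref{NaTu}) and the strength reformulation, this is equivalent to $\eta\!\left((\binom{D+1}{2}-1)G\right) \ge \binom{D+1}{2}$, which by Lemma~\ref{thm:eta} says $\eta(G) \ge \binom{D+1}{2}/(\binom{D+1}{2}-1)$. Writing $a_D := \binom{D+1}{2}$, the hypothesis is $\eta(G) \ge a_D/(a_D-1)$ and the conclusion I need is $\eta(G) \ge a_{D+1}/(a_{D+1}-1)$. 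Since $a_D$ is strictly increasing, the ratio $a_D/(a_D-1) = 1 + 1/(a_D-1)$ is strictly \emph{decreasing} in $D$, so $a_{D+1}/(a_{D+1}-1) < a_D/(a_D-1) \le \eta(G)$, which gives exactly what is required. Unwinding via Lemma~\ref{thm:eta} and Theorem~\ref{NaTu} again yields that $(\binom{D+2}{2}-1)G = (a_{D+1}-1)G$ contains $a_{D+1}$ edge-disjoint spanning trees, i.e.\ $G$ is body-hinge rigid in $\mathbb{R}^{D+1}$.

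For the global rigidity case I would argue essentially identically, replacing $\eta(\cdot) \ge$ by $\eta(\cdot) >$ and invoking Corollary~\ref{thm:gminus} in place of the plain packing theorem. If $G$ is body-hinge globally rigid in $\mathbb{R}^d$ with $d \ge 3$, then $(a_d-1)G - e$ contains $a_d$ edge-disjoint spanning trees for all $e$, which by Corollary~\ref{thm:gminus} is equivalent to $\eta\!\left((a_d-1)G\right) > a_d$, i.e.\ (by Lemma~\ref{thm:eta}) $\eta(G) > a_d/(a_d-1)$. The same monotonicity of the ratio then gives $\eta(G) > a_d/(a_d-1) > a_{D}/(a_{D}-1)$ for all $D > d$, hence $(a_D-1)G - e$ contains $a_D$ edge-disjoint spanning trees for every $e$, which is the criterion in part~(ii) of Theorem~\ref{thm:bhgr}. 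The one case needing separate care is $d=2$: here Theorem~\ref{thm:bhgr}(i) uses the genuinely different criterion of $3$-edge-connectivity, so I would first handle $d=2$ by showing $3$-edge-connectivity of $G$ implies $\eta(3G - e) > \binom{4}{2}=6$ (equivalently $\eta(G) > 2$, which follows from $3$-edge-connectivity via the packing theorem), putting us into the $d=3$ regime, and then run the monotonicity argument for $D \ge 3$.

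The main obstacle I anticipate is purely bookkeeping: one must be careful that the two characterizations use $\binom{d+1}{2}-1$ as the scaling factor but $\binom{d+1}{2}$ as the required number of trees, so the clean inductive quantity is the ratio $a_D/(a_D-1)$ rather than either number alone, and the boundary between the $d=2$ clause and the $d\ge 3$ clause of Theorem~\ref{thm:bhgr} must be bridged explicitly. Once the problem is phrased in terms of the single real number $\eta(G)$ and the decreasing sequence $1 + 1/(a_D-1)$, the induction is immediate and no further computation is needed.
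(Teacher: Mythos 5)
Your core monotonicity argument is correct, and it is precisely the reasoning the paper leaves implicit: the paper offers no written proof, saying only that the corollary ``follows immediately'' from Theorems~\ref{thm:bhr} and~\ref{thm:bhgr}. Your reduction to the single invariant $\eta(G)$ via Lemma~\ref{thm:eta} --- body-hinge rigidity in $\mathbb{R}^D$ being equivalent to $\eta(G)\ge a_D/(a_D-1)$ where $a_D=\binom{D+1}{2}$, and the global version for $D\ge 3$ being the strict inequality via Corollary~\ref{thm:gminus} --- combined with the observation that $a_D/(a_D-1)=1+1/(a_D-1)$ is decreasing in $D$, is exactly how that ``immediately'' cashes out, so for $d\ge 3$ (and for plain rigidity for all $d\ge 2$) your proof is complete and needs no induction.

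However, your bridge from the $d=2$ clause of Theorem~\ref{thm:bhgr} to the $d\ge 3$ regime contains a genuine error. First, a bookkeeping slip: the multigraph relevant to $\mathbb{R}^3$ is $\bigl(\binom{4}{2}-1\bigr)G=5G$, not $3G$, so the condition to verify is that $5G-e$ contains $6$ edge-disjoint spanning trees for every $e$, i.e.\ $\eta(G)>6/5$, not $\eta(G)>2$. Second, and more seriously, your claimed intermediate step --- that $3$-edge-connectivity implies $\eta(G)>2$ --- is false: the Petersen graph is $3$-edge-connected yet has $\eta=15/9=5/3<2$ (take $X=E$ in the definition of strength), and $K_4$ is $3$-edge-connected with $\eta(K_4)=2$ exactly. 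So the step as written fails; if $\eta(G)>2$ were genuinely required, the corollary itself would be contradicted by the Petersen graph, which is body-hinge globally rigid in $\mathbb{R}^2$ by Theorem~\ref{thm:bhgr}(i). Fortunately only $\eta(G)>6/5$ is needed, and this does follow from $3$-edge-connectivity: for any edge cut $X$, each component of $G-X$ has at least $3$ edges leaving it, so $2|X|\ge 3\,c(G-X)$, whence $|X|/(c(G-X)-1)\ge \tfrac{3}{2}\cdot c(G-X)/(c(G-X)-1)>\tfrac{3}{2}>\tfrac{6}{5}$; since $a_D/(a_D-1)\le 6/5$ for all $D\ge 3$, the monotone argument then takes over. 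With this correction your proof goes through completely.
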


The following result gifts us sufficient spectral conditions for body-hinge rigidity and global rigidity.

\begin{theorem}\label{thm:bh}
Let $G$ be a graph with minimal degree $\delta \geq 3$.
\begin{enumerate}[topsep=1pt,itemsep=0pt,leftmargin=9mm]
    \item\label{it:bh:bhr} If
            \begin{align*}
                \mu_2(G) > \frac{1}{\delta+1} \left( 2+ \frac{1}{\binom{d+1}{2}-1} \right),
            \end{align*}
            then $G$ is body-hinge rigid in $\mathbb{R}^d$ for $d\ge 2$.
    \item\label{it:bh:bhgr} If
            \begin{align*}
                \mu_2(G) > \frac{1}{\delta+1} \left( 2+ \frac{2}{\binom{d+1}{2}-1} \right),
            \end{align*}
            then $G$ is body-hinge globally rigid in $\mathbb{R}^d$ for $d\ge 3$.
\end{enumerate}
\end{theorem}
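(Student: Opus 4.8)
The plan is to translate both spectral hypotheses into lower bounds on the fractional spanning tree packing number $\eta$, and then feed these into the combinatorial characterizations of Theorems~\ref{thm:bhr} and~\ref{thm:bhgr}. Throughout I would write $D := \binom{d+1}{2}$, so that the two characterizations ask for $D$ edge-disjoint spanning trees in $(D-1)G$ (for rigidity) and in $(D-1)G - e$ for every $e$ (for global rigidity). The key algebraic observation is that the two thresholds in the statement rewrite cleanly as
\begin{align*}
\frac{1}{\delta+1}\left(2 + \frac{1}{D-1}\right) = \frac{2D-1}{(D-1)(\delta+1)}, \qquad \frac{1}{\delta+1}\left(2 + \frac{2}{D-1}\right) = \frac{2D}{(D-1)(\delta+1)},
\end{align*}
which are exactly the thresholds appearing in Theorem~\ref{thm:frac} and Lemma~\ref{lem:tgst} under the choice $s = D$ and $t = D-1$.

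For part~\ref{it:bh:bhr}, I would apply Theorem~\ref{thm:frac} with $s = D$, $t = D-1$. Its degree hypothesis $\delta \geq 2s/t = 2D/(D-1)$ holds because $2D/(D-1) = 2 + 2/(D-1)$ is decreasing in $D$ and attains its maximum value $3$ at $D = 3$ (the smallest case, occurring when $d = 2$); since $\delta \geq 3$, the hypothesis is satisfied for all $d \geq 2$. Theorem~\ref{thm:frac} then yields $\eta(G) \geq D/(D-1)$, and Lemma~\ref{thm:eta} gives $\eta((D-1)G) = (D-1)\eta(G) \geq D$. Hence $(D-1)G$ contains $D$ edge-disjoint spanning trees, and Theorem~\ref{thm:bhr} shows $G$ is body-hinge rigid in $\mathbb{R}^d$.

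For part~\ref{it:bh:bhgr}, I would instead invoke Lemma~\ref{lem:tgst} with the same $s = D$, $t = D-1$, since that lemma directly produces edge-disjoint spanning trees in $tG - e$ for every edge $e$. Here the required strict degree bound $\delta > 2D/(D-1)$ holds because for $d \geq 3$ we have $D \geq 6$, so $2D/(D-1) \leq 12/5 < 3 \leq \delta$. The lemma then guarantees that $(D-1)G - e$ contains $D$ edge-disjoint spanning trees for every $e$, which is precisely condition~(ii) of Theorem~\ref{thm:bhgr}, so $G$ is body-hinge globally rigid in $\mathbb{R}^d$.

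There is no genuine obstacle here beyond bookkeeping: the argument is a direct pipeline from the spectral gap to spanning tree packing to the combinatorial characterizations. The only point requiring care is the choice $(s,t) = (D, D-1)$, which makes the scalar multiplication identity $\eta(tG) = t\eta(G)$ convert a fractional packing of slightly more than one tree per edge-copy into an integral packing of exactly $D$ trees; and verifying that the degree thresholds $2D/(D-1)$ stay at or below $3$ over the relevant ranges of $d$. This is exactly where the gap between the $d \geq 2$ scope of part~\ref{it:bh:bhr} and the $d \geq 3$ scope of part~\ref{it:bh:bhgr} arises: at $d=2$ one has $D=3$, so $2D/(D-1) = 3$, and the strict degree inequality needed to run Lemma~\ref{lem:tgst} would fail, whereas the non-strict hypothesis of Theorem~\ref{thm:frac} still holds.
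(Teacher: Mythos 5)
Your proposal is correct, and part~\ref{it:bh:bhgr} coincides with the paper's own argument: the same rewriting of the threshold as $\frac{2D}{(D-1)(\delta+1)}$ followed by Lemma~\ref{lem:tgst} with $(s,t)=(D,D-1)$ and Theorem~\ref{thm:bhgr} (you are in fact slightly more careful than the paper, which never explicitly verifies the degree hypothesis $\delta > 2D/(D-1)$ of Lemma~\ref{lem:tgst}). For part~\ref{it:bh:bhr}, however, you take a genuinely different route: you apply the fractional packing theorem (Theorem~\ref{thm:frac}) to the simple graph $G$ to obtain $\eta(G) \geq D/(D-1)$, then scale via Lemma~\ref{thm:eta} to get $\eta((D-1)G)\geq D$ and invoke Nash--Williams/Tutte, whereas the paper applies Gu's multigraph theorem (Theorem~\ref{thm:mgac}) directly to $(D-1)G$, checking $(D-1)\delta \geq 2D$, computing the parameter $\ell = \left\lceil ((D-1)\delta+1)/(D-1)\right\rceil = \delta+1$, and using $\mu_2((D-1)G) = (D-1)\mu_2(G)$. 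Your version is more uniform -- both parts run the same $(s,t)=(D,D-1)$ pipeline, with part~\ref{it:bh:bhr} being the undeleted analogue of the mechanism inside the paper's own proof of Lemma~\ref{lem:tgst} -- and it avoids the multigraph spectral bookkeeping (multiplicity and the ceiling in $\ell$); the paper's version has the mild advantage of never needing the fractional quantity $\eta$ for part~\ref{it:bh:bhr} at all. One small caveat: your closing remark over-attributes the $d \geq 3$ restriction in part~\ref{it:bh:bhgr} to the failure of the strict degree bound at $D=3$; the restriction is forced first and foremost by Theorem~\ref{thm:bhgr} itself, whose tree-packing characterization of body-hinge global rigidity only holds for $d \geq 3$ (for $d=2$ it is instead equivalent to $3$-edge-connectivity). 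This does not affect the validity of your proof as stated.
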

\begin{proof}
\ref{it:bh:bhr} Set $D := \binom{d+1}{2}$. Since $D \geq 3$,
\begin{align*}
\delta((D-1)G) = (D-1)\delta \geq 3 (D-1) \geq 2D.
\end{align*}
As $m = D-1$, we see that
\begin{align*}
\ell = \max \left\{ \frac{ \delta((D-1)G) +1}{m},2 \right\} = \max \left\{  \delta + \frac{1}{D-1},2 \right\} = \delta +1.
\end{align*}
Finally, as $\mu_2((D-1)G) = (D-1) \mu_2(G)$, we have that
\begin{align*}
\mu_2((D-1)G) > (D-1) \frac{1}{\delta+1} \left( 2+ \frac{1}{D-1} \right) = \frac{2D-1}{\delta+1} = \frac{2D-1}{\ell}.
\end{align*}

By Theorem~\ref{thm:mgac}, $(D-1)G$ has at least $D$ edge-disjoint spanning trees, and thus $G$ is body-hinge rigid in $\mathbb{R}^d$ by Theorem~\ref{thm:bhr}.

\ref{it:bh:bhgr} We have
\begin{align*}
\mu_2(G) > \frac{1}{\delta+1} \left( 2+ \frac{2}{\binom{d+1}{2}-1}\right)
=\frac{1}{\delta+1} \left( 2+ \frac{2}{D-1} \right) = \frac{2D}{(D-1)(\delta+1)}.
\end{align*}
By Lemma~\ref{lem:tgst}, $(D-1)G-e$ has at least $D$ edge-disjoint spanning trees for every edge $e$. Thus $G$ is body-hinge globally rigid in $\mathbb{R}^d$ by Theorem~\ref{thm:bhgr}.
\end{proof}

By combining Proposition~\ref{pro:edgeconn} and the above theorems, we obtain the following results. 

\begin{corollary}\label{cor:bhram}
Let $G$ be a $k$-regular Ramanujan graph with $n$ vertices.
\begin{enumerate}[topsep=1pt,itemsep=0pt,leftmargin=9mm]
    \item If $k\geq 4$, then $G$ is body-hinge rigid in $\mathbb{R}^2$.
    \item If $k\geq 5$, or if $k= 4$ and $n\ge 20$ $($or $n\le 9)$, then $G$ is body-hinge globally rigid in $\mathbb{R}^2$.
    \item If $k\geq 4$, then $G$ is body-hinge globally rigid in $\mathbb{R}^d$ for $d\geq 3$.
\end{enumerate}
\end{corollary}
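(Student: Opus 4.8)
The plan is to prove Corollary~\ref{cor:bhram} by combining the edge-connectivity and body-hinge characterizations already established, handling the $d=2$ and $d \geq 3$ cases separately since they rely on genuinely different criteria. The central tool for the higher-dimensional statements is the spectral bound on $\mu_2(G)$ afforded by the Ramanujan condition: for a $k$-regular Ramanujan graph we have $\mu_2(G) = k - \lambda_2(G) \geq k - 2\sqrt{k-1}$, and this is the quantity I would feed into Theorem~\ref{thm:bh}. The $d=2$ cases, by contrast, are purely combinatorial: by Theorem~\ref{thm:bhr} with $d=2$ we have $\binom{d+1}{2}=3$, so body-hinge rigidity in $\mathbb{R}^2$ amounts to $2G$ containing three edge-disjoint spanning trees, and by Theorem~\ref{thm:bhgr}(i), body-hinge global rigidity in $\mathbb{R}^2$ is equivalent to simple $3$-edge-connectivity.

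For part (i), I would argue that a $k$-regular Ramanujan graph with $k \geq 4$ is body-hinge rigid in $\mathbb{R}^2$. Here I expect the cleanest route is to verify the spectral hypothesis of Theorem~\ref{thm:bh}\ref{it:bh:bhr} with $d=2$, $D=3$: the requirement becomes $\mu_2(G) > \frac{1}{\delta+1}\left(2 + \frac{1}{2}\right) = \frac{5}{2(\delta+1)}$. Substituting $\delta = k$ and using $\mu_2(G) \geq k - 2\sqrt{k-1}$, I would confirm $k - 2\sqrt{k-1} > \frac{5}{2(k+1)}$ holds for all $k \geq 4$; this reduces to an elementary polynomial inequality after clearing denominators and isolating the square root. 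For part (ii), the $k=4$ subcase with the order restrictions $n \geq 20$ or $n \leq 9$ is where Proposition~\ref{pro:edgeconn}\ref{it:edgeconn:k4} does the work: it tells us precisely that such graphs are $4$-edge-connected, hence in particular $3$-edge-connected, so Theorem~\ref{thm:bhgr}(i) yields body-hinge global rigidity in $\mathbb{R}^2$. For $k \geq 5$, Proposition~\ref{pro:edgeconn}\ref{it:edgeconn:k5} and \ref{it:edgeconn:k6} guarantee at least $4$-edge-connectivity, so $3$-edge-connectivity is automatic and Theorem~\ref{thm:bhgr}(i) again applies.

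For part (iii), I would apply Theorem~\ref{thm:bh}\ref{it:bh:bhgr} for each $d \geq 3$, checking the hypothesis $\mu_2(G) > \frac{1}{\delta+1}\left(2 + \frac{2}{D-1}\right)$ where $D = \binom{d+1}{2} \geq 6$. Since the right-hand side is \emph{decreasing} in $d$ (the term $\frac{2}{D-1}$ shrinks as $D$ grows), the binding case is the smallest dimension $d=3$, where $D=6$ and the threshold is $\frac{1}{k+1}\left(2 + \frac{2}{5}\right) = \frac{12}{5(k+1)}$. It then suffices to verify $k - 2\sqrt{k-1} > \frac{12}{5(k+1)}$ for all $k \geq 4$, after which the inequality for all larger $d$ follows for free by monotonicity. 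This is once more a routine scalar inequality.

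The main obstacle I anticipate is not conceptual but bookkeeping: ensuring the spectral inequalities $k - 2\sqrt{k-1} > \frac{5}{2(k+1)}$ and $k - 2\sqrt{k-1} > \frac{12}{5(k+1)}$ are genuinely satisfied at the boundary value $k=4$, where $k - 2\sqrt{k-1} = 4 - 2\sqrt{3} \approx 0.536$, against right-hand sides of $\frac{1}{2} = 0.5$ and $\frac{12}{25} = 0.48$ respectively. Since $0.536$ exceeds both, the $k=4$ case is tight but valid, and larger $k$ only improves the left side while shrinking the right; I would state this verification explicitly since the margin is small. The one place requiring care is part (ii): for $k=4$ the spectral route does \emph{not} directly apply at all orders (the graph may fail the $3$-edge-connectivity threshold for intermediate $n$), which is exactly why the statement carries the hypothesis $n \geq 20$ or $n \leq 9$ and why I would route that subcase through Proposition~\ref{pro:edgeconn}\ref{it:edgeconn:k4} rather than through Theorem~\ref{thm:bh}.
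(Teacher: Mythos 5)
Your proposal is correct and follows exactly the route the paper intends: the paper derives this corollary by ``combining Proposition~\ref{pro:edgeconn} and the above theorems,'' which is precisely your split of part (ii) through Proposition~\ref{pro:edgeconn} plus Theorem~\ref{thm:bhgr}(i), and parts (i) and (iii) through Theorem~\ref{thm:bh} with the Ramanujan bound $\mu_2(G)\geq k-2\sqrt{k-1}$, checking the binding cases $k=4$ and $d=3$. Your explicit verification of the tight inequalities $4-2\sqrt{3}>\tfrac{5}{10}$ and $4-2\sqrt{3}>\tfrac{12}{25}$ supplies the arithmetic the paper leaves implicit.
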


Note that Corollary~\ref{cor:bhram} cannot be extended to cubic Ramanujan graphs in general, as there exist cubic Ramanujan graphs that are not body-hinge rigid in $\mathbb{R}^d$ for any $d\geq 2$. The cubic Ramanujan graph on the left in Figure~\ref{fig:cubicRamanujan} was originally constructed in \cite{Cio10}, and its largest absolute eigenvalue smaller than 3 equals the largest root of the equation $x^3-7x-2=0$ (roughly about $2.7786$), which is less than $2\sqrt{2}$. 
However, since $G$ has edge-connectivity 1, $tG$ will have edge-connectivity $t$ for each positive integer $t$ and so will contain at most $t$ edge-disjoint spanning trees. By Theorems~\ref{thm:bhr} and \ref{thm:bhgr}, $G$ is not body-hinge rigid.

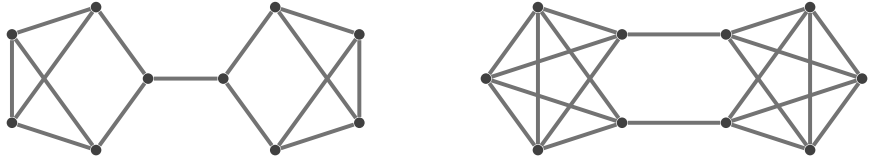
\begin{figure}[!ht]
\centering
    \begin{tikzpicture}
        \node[vertex] (a1) at (0:-1) {};
        \node[vertex] (a2) at (72:-1) {};
        \node[vertex] (a3) at (144:-1) {};
        \node[vertex] (a4) at (-144:-1) {};
        \node[vertex] (a5) at (-72:-1) {};
        \begin{scope}[xshift=-3cm]
            \node[vertex] (b1) at (0:1) {};
            \node[vertex] (b2) at (72:1) {};
            \node[vertex] (b3) at (144:1) {};
            \node[vertex] (b4) at (-144:1) {};
            \node[vertex] (b5) at (-72:1) {};
        \end{scope}
        \draw[edge] (a1)edge(a2) (a1)edge(a5) (a2)edge(a3) (a2)edge(a4) (a3)edge(a4) (a3)edge(a5) (a4)edge(a5);
        \draw[edge] (b1)edge(b2) (b1)edge(b5) (b2)edge(b3) (b2)edge(b4) (b3)edge(b4) (b3)edge(b5) (b4)edge(b5);
        \draw[edge] (a1)edge(b1);
    \end{tikzpicture}\qquad\qquad
    \begin{tikzpicture}
        \node[vertex] (a1) at (0:-1) {};
        \node[vertex] (a2) at (72:-1) {};
        \node[vertex] (a3) at (144:-1) {};
        \node[vertex] (a4) at (-144:-1) {};
        \node[vertex] (a5) at (-72:-1) {};
        \begin{scope}[xshift=3cm]
            \node[vertex] (b1) at (0:1) {};
            \node[vertex] (b2) at (72:1) {};
            \node[vertex] (b3) at (144:1) {};
            \node[vertex] (b4) at (-144:1) {};
            \node[vertex] (b5) at (-72:1) {};
        \end{scope}
        \draw[edge] (a1)edge(a2) (a1)edge(a3) (a1)edge(a4) (a1)edge(a5) (a2)edge(a3) (a2)edge(a4) (a2)edge(a5) (a3)edge(a5) (a4)edge(a5);
        \draw[edge] (b1)edge(b2) (b1)edge(b3) (b1)edge(b4) (b1)edge(b5) (b2)edge(b3) (b2)edge(b4) (b2)edge(b5) (b3)edge(b5) (b4)edge(b5);
        \draw[edge] (a3)edge(b4) (a4)edge(b3);
    \end{tikzpicture}
\caption{(Left) A cubic Ramanujan graph with edge-connectivity one. (Right) A $4$-regular Ramanujan graph with edge-connectivity two}
\label{fig:cubicRamanujan}
\end{figure}

We also notice that there exist 4-regular Ramanujan graphs with $10\le n < 20$ vertices that are not body-hinge globally rigid in $\mathbb{R}^2$. The $4$-regular Ramanujan graph on the right in Figure~\ref{fig:cubicRamanujan} was also constructed in \cite{Cio10}, and its largest absolute eigenvalue smaller than 4 is $\frac{1+\sqrt{33}}{2}<2\sqrt{3}$. Since it has edge-connectivity 2, it is not body-hinge globally rigid in $\mathbb{R}^2$ by Theorem~\ref{thm:bhgr}.

\subsection{Frameworks on surfaces of revolution}

To simplify the problem of dealing with frameworks in three-dimensional space,
we can assume that the joints of our framework are restricted to lie on a smooth surface $\mathcal{M} \subset \mathbb{R}^3$. We assume here that $\mathcal{M}$ is an \textbf{irreducible surface}; i.e.,~$\mathcal{M}$ is the zero set of an irreducible rational polynomial $h(x,y,z) \in \mathbb{Q}[X,Y,Z]$.
The framework $(G,p)$ with $p(v) \in \mathcal{M}$ for every $v \in V(G)$ is \textbf{rigid on $\mathcal{M}$} if there exists  $\varepsilon >0$ such that if $(G,p)$ is equivalent to $(G,q)$ and $\|p(v)-q(v)\|<\epsilon$ and $q(v) \in \mathcal{M}$ for every $v \in V(G)$, then $(G,p)$ is congruent to $(G,q)$.
It was shown in \cite{NixonOwenPower12} that the set of rigid frameworks on an irreducible surface $\mathcal{M}$ either contains an open dense set (in which case we say the graph is \textbf{rigid on $\mathcal{M}$}),
or it is a nowhere dense set.

An irreducible surface is called an \textbf{irreducible surface of revolution} if it can be generated by rotating a continuous curve about a fixed axis. In this special case, we can say the following.

\begin{theorem}[Nixon, Owen and Power \cite{NixonOwenPower12,NixonOwenPower14}]
\label{t:revol}
Let $\mathcal{M}$ be an irreducible surface of revolution.
Then a graph $G$ is rigid on $\mathcal{M}$ if and only if either:
\begin{enumerate}[topsep=1pt,itemsep=0pt,leftmargin=9mm]
    \item $G$ is a complete graph,
    \item $\mathcal{M}$ is a sphere and $G$ contains a spanning Laman graph,
    \item $\mathcal{M}$ is a cylinder and $G$ contains two edge-disjoint spanning trees, or
    \item $\mathcal{M}$ is not a cylinder or a sphere and $G$ contains two edge-disjoint spanning subgraphs $G_1,G_2$, where $G_1$ is a tree and every connected component of $G_2$ contains exactly one cycle.
\end{enumerate}
\end{theorem}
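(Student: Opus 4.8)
The plan is to reduce rigidity on $\mathcal{M}$ to a rank condition on a surface rigidity matrix and then match that rank to a combinatorial count matroid. First I would set up the \emph{surface rigidity matrix}: since each joint $p(v)$ is confined to the $2$-dimensional surface $\mathcal{M}$, its admissible infinitesimal velocities form the $2$-dimensional tangent plane $T_{p(v)}\mathcal{M}$, so the rigidity matrix has $2|V(G)|$ effective columns and one row per edge. A framework is rigid if and only if, generically, this matrix has rank $2|V(G)|-t$, where $t$ is the dimension of the space of trivial infinitesimal motions, i.e.\ the infinitesimal isometries of $\mathcal{M}$ that fix it. The next step is to compute $t$ for each surface of revolution: a sphere admits the full rotation group, giving $t=3$; a cylinder admits axial rotation together with axial translation, giving $t=2$; and every other irreducible surface of revolution admits only rotation about its axis, giving $t=1$. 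This is precisely what produces the three different counts $2|V|-3$, $2|V|-2$, $2|V|-1$ appearing in cases (ii)--(iv), with the complete-graph clause (i) isolated separately because a framework with too few joints has no room to flex regardless of the count.

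For the \emph{necessity} direction I would argue by a Maxwell-type counting estimate. If $G$ is rigid then the generic rank of the surface rigidity matrix equals $2|V|-t$, so the edge set carries a matroid of that rank whose independent sets obey the sparsity bound $|E(X)| \le 2|X|-t$ on every vertex subset $X$, since an over-counted subset would force a linear dependence among its rows. A spanning subgraph meeting this bound with equality is then $(2,t)$-tight, and a basis of the rigidity matroid supplies one. Converting these $(2,t)$-sparsity conditions into the stated spanning-subgraph decompositions is then a purely combinatorial step: by Nash--Williams/Tutte (Theorem~\ref{NaTu}) a $(2,2)$-tight spanning subgraph is exactly a union of two edge-disjoint spanning trees, which is case (iii); the $(2,1)$ count corresponds, via matroid union, to the existence of a spanning tree together with a spanning subgraph whose every component is unicyclic, which is case (iv); and $(2,3)$ is the Geiringer--Laman condition, which is case (ii).

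The real work is the \emph{sufficiency} direction, which I would handle by an inductive Henneberg-type construction tailored to each count matroid. The plan is to show that every $(2,t)$-tight graph can be generated from an explicit finite list of small base graphs by a sequence of moves, namely a $0$-extension attaching a new vertex by two edges and a $1$-extension subdividing an edge and joining the new vertex to a third vertex, and then to prove that each move preserves generic rigidity on $\mathcal{M}$. The base cases would be checked directly by exhibiting, for each base graph, a framework on $\mathcal{M}$ whose surface rigidity matrix attains the full rank $2|V|-t$, which are finite rank computations. The $0$-extension is straightforward, since a generic new joint on $\mathcal{M}$ contributes two independent rows. The main obstacle is the $1$-extension: after subdividing an edge one must exhibit a placement of the new joint on $\mathcal{M}$ that keeps the matrix at full rank. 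This demands a genuinely geometric argument that depends on the ambient surface, typically showing that the ``bad'' placements lie in a proper subvariety of $\mathcal{M}$ and exploiting irreducibility of $\mathcal{M}$ together with its one-dimensional isometry group; it is exactly here that the sphere and cylinder, whose larger symmetry groups change $t$, must be treated as genuinely special cases rather than absorbed into the general argument.
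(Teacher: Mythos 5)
You should first note that the paper does not prove Theorem~\ref{t:revol} at all: it is imported from Nixon, Owen and Power \cite{NixonOwenPower12,NixonOwenPower14}, so your proposal can only be judged against the strategy of those papers. Your overall architecture does mirror theirs: a surface rigidity matrix with two effective columns per vertex, trivial-motion dimensions $t=3,2,1$ for the sphere, the cylinder, and all other irreducible surfaces of revolution, an Asimow--Roth-type equivalence of generic rigidity with generic infinitesimal rigidity, Maxwell counting for necessity, and the combinatorial translations of $(2,t)$-tightness (Nash--Williams/Tutte for $t=2$, a spanning tree plus a spanning subgraph with unicyclic components for $t=1$, Geiringer--Laman for the sphere, and clause (i) holding trivially because equivalence and congruence coincide for complete graphs) are all correct.

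The genuine gap is in your sufficiency step. You claim that every $(2,t)$-tight graph can be generated from a finite list of base graphs using only $0$-extensions and $1$-extensions. That is Henneberg's theorem for $t=3$, but it is false for $t=2$ and $t=1$. Take a chain of copies of $K_4$ in which consecutive copies are joined by two edges: such a graph is $(2,2)$-tight, has no vertex of degree $2$, and every vertex of degree $3$ lies inside a copy of $K_4$, so its three neighbours are pairwise adjacent and any inverse $1$-extension would have to create a parallel edge. Since these examples have arbitrarily many vertices, no finite base list can absorb them. The same obstruction appears for $(2,1)$-tight graphs (already in $K_5$ minus an edge, and in analogous chains of $K_4$'s). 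This is exactly why the cited papers must introduce further operations --- edge-to-$K_3$, vertex-to-$K_4$, vertex splitting and graph-joining moves --- and the bulk of their work, especially in the $(2,1)$ case treated in \cite{NixonOwenPower14}, consists of proving that these additional moves preserve generic rigidity on $\mathcal{M}$. Those preservation arguments are substantially harder than the ``bad placements lie in a proper subvariety'' argument that suffices for a $1$-extension; vertex splitting in particular requires a delicate specialisation/limiting argument. Without identifying this larger move set and proving the corresponding geometric lemmas, your induction cannot reach all rigid graphs, so the sufficiency direction as proposed fails.
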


For a surface $\mathcal{M}$ in $\mathbb{R}^3$, we define the following. A framework $(G,p)$ with $p(v) \in \mathcal{M}$ for every $v \in V(G)$ is \textbf{globally rigid on $\mathcal{M}$} if every framework $(G,q)$ that is equivalent to $(G,p)$ with $q(v) \in \mathcal{M}$ for every $v \in V(G)$ is also congruent to $(G,p)$.
For a specific case of $\mathcal{M}$ being the cylinder, it was proven that the set of globally rigid frameworks on the cylinder either contains an open dense set (in which case we say the graph is \textbf{globally rigid on the cylinder}), or it is a nowhere dense set (in which case we say the graph is \textbf{not globally rigid on the cylinder}). They also characterized exactly which graphs are globally rigid on the cylinder.

\begin{theorem}[Jackson and Nixon \cite{JacksonNixon19}]\label{thm:grcyl}
A graph $G=(V,E)$ is globally rigid on the cylinder if and only if either $G$ is a complete graph, or $G$ is 2-connected and $G$ is \textbf{redundantly rigid on the cylinder} (i.e.,~for every edge $e \in E$, the graph $G-e$ is rigid on the cylinder).
\end{theorem}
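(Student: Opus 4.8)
The statement has the same shape as Hendrickson's conditions for $\mathbb{R}^2$, so the natural plan is to split it into a comparatively routine necessity direction and a substantially harder sufficiency direction, while exploiting the clean combinatorial description of rigidity on the cylinder supplied by Theorem~\ref{t:revol}: a graph is rigid on the cylinder exactly when it contains two edge-disjoint spanning trees. Equivalently, the generic rigidity matroid of the cylinder is the union of two copies of the graphic matroid, so ``rigid'' means $(2,2)$-spanning and ``redundantly rigid on the cylinder'' means that $G-e$ contains two edge-disjoint spanning trees for every edge $e$. I would phrase the entire argument in this matroidal language.

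For necessity, suppose $(G,p)$ is a generic globally rigid framework on the cylinder and $G$ is not complete. First I would prove $2$-connectivity by a reflection argument tailored to the cylinder $x^2+y^2=1$: if $v$ were a cut vertex separating $G$ into $G_1$ and $G_2$, then reflecting the image of $G_2$ through the horizontal plane $z=p(v)_z$ (an isometry of $\mathbb{R}^3$ that maps the cylinder to itself and fixes the circle through $p(v)$) preserves every edge length, including the edges at $v$, since $p(v)$ is fixed and only the axial coordinate is negated; yet it yields a generically non-congruent realization, contradicting global rigidity. For redundant rigidity I would run the surface analogue of Hendrickson's argument~\cite{Hendrickson92}: if $G-e$ is not rigid on the cylinder, the configurations equivalent to $p$ form a positive-dimensional algebraic set, and reintroducing the single length equation for $e$ still leaves a configuration equivalent to $(G,p)$ but not congruent to it.

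For sufficiency --- the hard direction --- the plan is to mimic the $\mathbb{R}^2$ strategy of Connelly~\cite{Conn05} and of Jackson and Jord\'{a}n~\cite{JaJo05}. I would first establish a stress-matrix sufficient condition for global rigidity on the cylinder: a generic framework is globally rigid on $\mathcal{M}$ provided it is infinitesimally rigid on $\mathcal{M}$ and carries an equilibrium stress whose associated stress matrix has the maximum rank permitted by the cylinder's $2$-dimensional isometry group. I would then give a recursive construction of all $2$-connected, $(2,2)$-redundantly rigid graphs from small base graphs using Henneberg-type moves (edge addition and $1$-extension), and prove that each move preserves generic global rigidity on the cylinder by combining and extending partial equilibrium stresses across the operation. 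Finally I would verify global rigidity directly on the base cases and invoke the fact (known for the cylinder, as noted above) that generic global rigidity is a property of the graph, so that exhibiting one good framework suffices.

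The main obstacle I expect is precisely this sufficiency step, and two parts of it are delicate. One is establishing the surface version of Connelly's stress-matrix criterion, where the correct notion of ``maximal rank'' must account both for the smaller isometry group and for the curvature constraint keeping points on $\mathcal{M}$. The other is showing that the recursive operations genuinely preserve global rigidity; the $1$-extension step in particular requires a gluing argument for equilibrium stresses that can fail at special (non-generic) configurations, so one must argue that the good behaviour holds on an open dense set. Pinning down the correct base cases, and proving via an ear-decomposition or matroid-connectivity argument that the listed operations exhaust the $2$-connected redundantly rigid class, is the remaining combinatorial work.
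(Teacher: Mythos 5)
This statement is imported verbatim from Jackson and Nixon \cite{JacksonNixon19}; the paper you are comparing against offers no proof of it at all, so the only meaningful question is whether your proposal would itself constitute a proof. Its high-level architecture does match the actual strategy in the literature: necessity via Hendrickson-type arguments adapted to the cylinder, and sufficiency via a Connelly-style stress-matrix criterion for surfaces combined with an inductive construction by $1$-extensions. Your observation that the cylinder's rigidity matroid is the union of two graphic matroids (so that rigidity is ``two edge-disjoint spanning trees'' and redundant rigidity is the same property for every $G-e$) is also correct and consistent with Theorem~\ref{t:revol}.

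The difficulty is that, as written, every load-bearing step is deferred rather than proved, and each deferred step is itself a substantial theorem. The stress-matrix sufficient condition on the cylinder is not something one ``establishes first''; it is the main result of a separate paper of Jackson and Nixon (\emph{Stress matrices and global rigidity of frameworks on surfaces}, 2015), and its proof requires reworking Connelly's entire machinery in the presence of the quadric constraint and the smaller ($2$-dimensional, non-compact) isometry group. Likewise, the claims that $1$-extension preserves generic global rigidity on the cylinder, and that every $2$-connected, redundantly rigid graph on the cylinder can be reduced by such moves to verifiable base cases, together constitute essentially the whole content of the 2019 paper; your sketch names these as ``delicate'' but supplies no argument, not even a candidate list of base cases or a reason an admissible reducible vertex must exist. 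Even your necessity direction has a real gap: Hendrickson's argument is not just ``a positive-dimensional set of equivalent configurations leaves a non-congruent solution when the length equation for $e$ is restored''; it needs a compactness/degree argument on the configuration space modulo isometries, and on the cylinder that quotient is by a non-compact group (axial translations), so the compactness step must be repaired rather than copied. In short, the proposal is a correct roadmap of the known proof, but it is a roadmap, not a proof; nothing in it could be checked for correctness as it stands.
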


We now obtain the following sufficient spectral condition for rigidity on certain types of irreducible surfaces of revolution by utilizing the results of Section \ref{sec: connandst}.

\begin{theorem}\label{thm:specgrcyl}
Let $G$ be a graph with minimum degree $\delta$.
\begin{enumerate}[topsep=1pt,itemsep=0pt,leftmargin=9mm]
    \item\label{it:specgrcyl:d4} If $\delta \geq 4$ and $\mu_2(G) > \frac{3}{\delta +1}$, then $G$ is rigid on any irreducible surface of revolution that is not a sphere.
    \item\label{it:specgrcyl:d5} If $\delta \geq 5$ and $\mu_2(G) > \frac{4}{\delta +1}$, then $G$ is redundantly rigid on the cylinder.
\end{enumerate}
\end{theorem}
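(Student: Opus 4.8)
The plan is to reduce both parts of Theorem~\ref{thm:specgrcyl} to the combinatorial characterizations in Theorems~\ref{t:revol} and~\ref{thm:grcyl}, using the spanning-tree-packing results from Section~\ref{sec: connandst}. For part~\ref{it:specgrcyl:d4}, the three relevant non-sphere cases of Theorem~\ref{t:revol} (cylinder, and the general case) all require $G$ to contain two edge-disjoint spanning trees, or two edge-disjoint spanning subgraphs $G_1, G_2$ where $G_1$ is a tree and each component of $G_2$ has exactly one cycle; the latter structure is implied by having two edge-disjoint spanning trees, since a spanning tree plus any one extra edge in each component furnishes such a $G_2$, and a spanning connected graph certainly contains a spanning subgraph of that type. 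So it suffices to show $G$ has two edge-disjoint spanning trees, which by Theorem~\ref{thm:LHGL} (with $k=2$) holds whenever $\mu_2(G) > \frac{3}{\delta+1} = \frac{2\cdot 2 - 1}{\delta + 1}$ and $\delta \geq 4 = 2k$. This matches the hypothesis exactly.

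For part~\ref{it:specgrcyl:d5}, the target is redundant rigidity on the cylinder, i.e.~that $G - e$ is rigid on the cylinder for every edge $e$. By case~(iii) of Theorem~\ref{t:revol}, $G-e$ is rigid on the cylinder precisely when $G-e$ contains two edge-disjoint spanning trees. Thus I need $\eta(G) > 2$, which by Corollary~\ref{thm:gminus} is exactly the statement that $G - e$ has two edge-disjoint spanning trees for every $e$. The cleanest route is Theorem~\ref{thm:frac} (the fractional version): with $\delta \geq 5$ and $\mu_2(G) > \frac{4}{\delta+1}$, I want to conclude $\eta(G) > 2$. Concretely, choosing a small rational slack as in the proof of Lemma~\ref{lem:tgst}, write $\frac{4}{\delta+1} = \frac{2s-1}{t(\delta+1)}$ for suitable $s/t$ slightly exceeding $2$: since $\mu_2(G) > \frac{4}{\delta+1}$ one can pick rationals $s,t$ with $\frac{s}{t} > 2$ and $\frac{2s-1}{t(\delta+1)} < \mu_2(G)$, so Theorem~\ref{thm:frac} yields $\eta(G) \geq \frac{s}{t} > 2$, as required. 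One must also verify the edge-connectivity side condition in Theorem~\ref{thm:grcyl}; but $\eta(G) > 2$ forces $G$ to be $3$-edge-connected (since $\eta(G) > 2$ means $|X| > 2(c(G-X)-1)$ for every disconnecting edge set, in particular $|X| \geq 3$ whenever $G - X$ is disconnected), hence $2$-connected is immediate, so the hypotheses of Theorem~\ref{thm:grcyl} are met and global rigidity on the cylinder follows once redundant rigidity is established.

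The main obstacle I anticipate is purely bookkeeping: making the rational-slack argument in part~\ref{it:specgrcyl:d5} airtight, since Theorem~\ref{thm:frac} is stated for a fixed ratio $s/t \le \delta/2$ and one must confirm the chosen $s,t$ satisfy $\delta \geq 2s/t$ as well as the eigenvalue inequality. This is exactly the manoeuvre already carried out in the proof of Lemma~\ref{lem:tgst}, so I would mirror that argument rather than reprove it from scratch. Everything else is a direct translation between the combinatorial tree-packing conditions and the surface-rigidity characterizations, with the numerical thresholds $\frac{3}{\delta+1}$ and $\frac{4}{\delta+1}$ engineered to line up with the $\frac{2k-1}{\delta+1}$ form of Theorem~\ref{thm:LHGL} for $k=2$ and the $\frac{2s-1}{t(\delta+1)}$ form of Theorem~\ref{thm:frac} for $s/t$ just above $2$, respectively.
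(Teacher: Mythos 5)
Part~\ref{it:specgrcyl:d4} of your proposal has a genuine gap. You claim that two edge-disjoint spanning trees already yield the structure required in case (iv) of Theorem~\ref{t:revol}, namely a spanning tree $G_1$ edge-disjoint from a spanning subgraph $G_2$ each of whose components contains exactly one cycle. That implication is false in general: such a pair uses at least $(|V|-1)+|V|=2|V|-1$ edges, so a graph consisting exactly of two edge-disjoint spanning trees (that is, with $|E|=2|V|-2$) contains two edge-disjoint spanning trees but admits no such decomposition. The ``extra edge'' you want to add to the second tree must lie outside \emph{both} trees, and you never establish that it exists; likewise your assertion that ``a spanning connected graph certainly contains a spanning subgraph of that type'' is false (a spanning tree does not). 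The missing step --- and it is exactly the step the paper supplies --- is an edge count from the degree hypothesis: $\delta\geq 4$ gives $|E|\geq \delta|V|/2\geq 2|V|>2|V|-2$, so some edge $e$ of $G$ lies in neither spanning tree, and then $G_1=T_1$ and $G_2=T_2+e$ satisfy case (iv). With that one line added, your part~\ref{it:specgrcyl:d4} coincides with the paper's proof.

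Part~\ref{it:specgrcyl:d5} is correct and is essentially the paper's argument: the paper simply cites Lemma~\ref{lem:tgst} with $s=2$, $t=1$ (whose proof is precisely the rational-slack manoeuvre through Corollary~\ref{thm:gminus}, Theorem~\ref{thm:frac} and Lemma~\ref{thm:eta} that you propose to mirror) and then applies Theorem~\ref{t:revol}(iii), so you could cite that lemma directly rather than re-derive it. One caveat on your closing remark: $\eta(G)>2$ does force $3$-edge-connectivity, but $3$-edge-connectivity does \emph{not} imply $2$-connectivity (two copies of $K_4$ glued at a vertex is $3$-edge-connected yet has a cut vertex), so your route to the hypotheses of Theorem~\ref{thm:grcyl} is flawed; fortunately the statement being proved only claims redundant rigidity on the cylinder, so this extraneous step costs you nothing here, though it is precisely why the paper's subsequent corollary invokes $2$-connectivity via Proposition~\ref{pro:edgeconn} rather than deducing it from edge-connectivity.
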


\begin{proof}
\ref{it:specgrcyl:d4} By Theorem~\ref{thm:LHGL}, $G$ contains 2 edge-disjoint spanning trees. Since $|E| \geq \delta|V|/2 > 2|V| -2$, $G$ must also contain an extra edge which is not in either of the edge-disjoint spanning trees. The result now follows from Theorem \ref{t:revol}.

\ref{it:specgrcyl:d5}
The result follows immediately from Lemma \ref{lem:tgst} and Theorem \ref{t:revol}.
\end{proof}

We now have an immediate corollary for Ramanujan graphs.

\begin{corollary}
Let $G$ be a $k$-regular Ramanujan graph with $n$ vertices where either $k\geq 5$, or $k= 4$ and $n\ge 20$ (or $n\le 9$). Then the following holds.
\begin{enumerate}[topsep=1pt,itemsep=0pt,leftmargin=9mm]
    \item\label{it:k20:rev} $G$ is rigid on any irreducible surface of revolution that is not a sphere.
    \item\label{it:k20:cyl} $G$ is globally rigid on the cylinder.
\end{enumerate}
\end{corollary}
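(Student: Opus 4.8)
The plan is to reduce both parts to the existence of edge-disjoint spanning trees, using the combinatorial characterizations of Theorem~\ref{t:revol} and Theorem~\ref{thm:grcyl}, and then to supply the required packings either spectrally (via Theorem~\ref{thm:specgrcyl}) when $k\geq 5$, or combinatorially from the edge-connectivity recorded in Proposition~\ref{pro:edgeconn} when $k=4$. Throughout I would use the Ramanujan bound $\mu_2(G)\geq k-2\sqrt{k-1}$.

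For part \ref{it:k20:rev}, by Theorem~\ref{t:revol} it suffices to exhibit two edge-disjoint spanning trees together with one further edge outside both: taking $G_1$ to be the first tree and $G_2$ to be the second tree plus the spare edge handles the surfaces that are neither a sphere nor a cylinder (condition (iv)), while two edge-disjoint spanning trees alone handle the cylinder (condition (iii)). For $k\geq 5$ I would simply invoke Theorem~\ref{thm:specgrcyl}\ref{it:specgrcyl:d4}, after checking $\delta=k\geq 4$ and $k-2\sqrt{k-1}>\tfrac{3}{k+1}$ (true at $k=5$ since $1>\tfrac12$, and improving with $k$ as the left side increases while the right side decreases). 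For $k=4$ the bound $4-2\sqrt3\approx 0.54$ is too small to exceed $\tfrac35$, so instead I would use Proposition~\ref{pro:edgeconn}\ref{it:edgeconn:k4}: under $n\geq 20$ (or $n\leq 9$) the graph is $4$-edge-connected, hence carries two edge-disjoint spanning trees by Theorem~\ref{NaTu}, and $|E(G)|=2n>2(n-1)$ supplies the spare edge.

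For part \ref{it:k20:cyl}, Theorem~\ref{thm:grcyl} reduces the claim to showing that $G$ is $2$-connected and redundantly rigid on the cylinder. The former holds for all $k\geq 4$ by Proposition~\ref{pro:edgeconn}\ref{it:edgeconn:k3}. By Theorem~\ref{t:revol}, redundant rigidity on the cylinder means $G-e$ contains two edge-disjoint spanning trees for every edge $e$, which by Corollary~\ref{thm:gminus} is precisely the condition $\eta(G)>2$. For $k\geq 5$ this is exactly the conclusion of Theorem~\ref{thm:specgrcyl}\ref{it:specgrcyl:d5}, valid once $\delta=k\geq 5$ and $k-2\sqrt{k-1}>\tfrac{4}{k+1}$ (true at $k=5$ since $1>\tfrac23$).

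The main obstacle I foresee is the case $k=4$ of part \ref{it:k20:cyl}: here $\delta=4$ violates the hypothesis $\delta\geq 5$ of Theorem~\ref{thm:specgrcyl}\ref{it:specgrcyl:d5}, and the eigenvalue bound is in any case too weak to yield $\eta(G)>2$ spectrally, so I must argue combinatorially. Proposition~\ref{pro:edgeconn}\ref{it:edgeconn:k4} gives that $G$ is $4$-edge-connected, and I would prove that any $4$-edge-connected graph satisfies $\eta(G)>2$: for any partition of $V(G)$ into $r\geq 2$ parts, each part sends at least $4$ edges to its complement, so the number $e_P$ of crossing edges obeys $2e_P\geq 4r$, whence $\tfrac{e_P}{r-1}\geq\tfrac{2r}{r-1}\geq\tfrac{2n}{n-1}>2$; minimizing over partitions yields $\eta(G)>2$. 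Combined with $2$-connectivity and Theorem~\ref{thm:grcyl}, this settles the $k=4$ case. I note that this edge-connectivity argument in fact proves both parts uniformly for all $k$ in range, since Proposition~\ref{pro:edgeconn} yields $4$-edge-connectivity (hence $\eta(G)>2$) in every case, so one could dispense with Theorem~\ref{thm:specgrcyl} altogether.
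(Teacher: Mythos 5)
Your proof is correct and follows essentially the same route as the paper: both reduce the statement to spanning-tree packings via Theorems~\ref{t:revol} and~\ref{thm:grcyl}, invoke Proposition~\ref{pro:edgeconn} for $4$-edge-connectivity and $2$-connectivity, and apply Theorem~\ref{thm:specgrcyl} spectrally for $k\geq 5$ in part (ii). The only cosmetic differences are that the paper handles part (i) uniformly by edge-connectivity (exactly as your closing remark suggests) rather than spectrally for $k\geq 5$, and that your derivation of $\eta(G)>2$ from $4$-edge-connectivity together with Corollary~\ref{thm:gminus} is just an expanded proof of the consequence of Theorem~\ref{NaTu} that the paper cites directly.
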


\begin{proof}
\ref{it:k20:rev}
By Proposition~\ref{pro:edgeconn}, $G$ is 4-edge-connected. Thus, $G-e$ contains 2 edge-disjoint spanning trees for every edge $e$. By Theorem \ref{t:revol}, $G$ is rigid on any irreducible surface of revolution that is not a sphere.

\ref{it:k20:cyl}
When $k\geq 5$, we have $\mu_2(G) = k-\lambda_2(G) \ge k -2\sqrt{k-1}> \frac{4}{k+1}$ and thus $G$ is redundantly rigid on the cylinder by Theorem~\ref{thm:specgrcyl}. When $k=4$ and $n\ge 20$ (or $n\le 9$), by Proposition~\ref{pro:edgeconn}, $G$ is 4-edge-connected, and thus $G-e$ contains 2 edge-disjoint spanning trees for every edge $e$ by Theorem \ref{NaTu}. By Theorem~\ref{t:revol}, $G$ is redundantly rigid on the cylinder. By Proposition~\ref{pro:edgeconn}, any $k$-regular Ramanujan graph for $k\ge 4$ is 2-connected. Thus $G$ is globally rigid on the cylinder by Theorem~\ref{thm:grcyl}.
\end{proof}
Notice that there exist 4-regular Ramanujan graphs that are not redundantly rigid (and thus not globally rigid) on the cylinder. In Figure~\ref{fig:cubicRamanujan}, the graph on the right has edge-connectivity 2. Thus, for some edge $e$, $G-e$ has edge-connectivity 1 and so does not have 2 edge-disjoint spanning trees. By Theorem~\ref{t:revol}, it is not redundantly rigid on the cylinder.

\section{Computational results}\label{sec:comp}

The number of regular graphs increases drastically with the number of vertices, and so do the number of Ramanujan graphs. Nevertheless, we were able to obtain some computational results for Ramanujan graphs being rigid or globally rigid in $\mathbb{R}^2$. In this section we summarize those results split up into three subsections, where \Cref{sec:comp:general} deals with the general case, \Cref{sec:comp:bipartite} considers bipartite graphs only and \Cref{sec:comp:vt} specifies for vertex-transitive graphs. In the latter two special cases we can compute until a much higher number of vertices. For the computations we used \geng, which is a part of \nauty, for generating sets of graphs, and our own \mathematica\ code for checking rigidity properties.

\subsection{Ramanujan Graphs}\label{sec:comp:general}

For small orders, it is possible to determine all $k$-regular Ramanujan graphs by computer (see \Cref{tab:ramanujan} and \cite{ZenodoRamanujan} for a data set).
\begin{table}[ht]
    \centering
    \begin{tabular}{rrrrr}
        \toprule
        $|V|$\textbackslash$k$  & 4 & 5 & 6 & 7 \\\midrule
         7 & 2 & - & - & - \\
         8 & 6 & 3 & - & - \\
         9 & 15 & - & 4 & - \\
         10 & 57 & 59 & 21 & 5 \\
         11 & 247 & - & 263 & - \\
         12 & 1476 & 7756 & 7818 & 1544 \\
         13 & 10439 & - & 367121 & - \\
         14 & 85386 & 3429389 & 21566449 & 21603716 \\
         15 & 781675 & - & ? & - \\
         16 & 7777226 & ? & ? & ? \\\bottomrule
    \end{tabular}
    \caption{Number of $k$-regular Ramanujan graphs with given number of vertices.}
    \label{tab:ramanujan}
\end{table}
Using simple implementations of the rigidity properties we were able to check all Ramanujan graphs in the table. For $k \geq 5$, we found that all $k$-regular Ramanujan graphs with at most 14 vertices were global rigid in $\mathbb{R}^2$.
We do know, however, that there exist 5-regular Ramanujan graphs that are not globally rigid with more than 14 vertices;
see \Cref{fig:VertexTransitiveSpecialCase}.

As 4-regular graphs are easier to generate than 5-, 6- and 7-regular graphs,
it was possible for us to compute up to 16 vertex graphs. We found that there are exactly four 4-regular Ramanujan graphs that are not rigid in $\mathbb{R}^2$ with 16 vertices or less (see \Cref{fig:nonrig-ramanujan}).
\begin{figure}[ht]
    \centering
    \begin{tikzpicture}[scale=0.7]
        \node[vertex] (a1) at (0:-1) {};
        \node[vertex] (a2) at (72:-1) {};
        \node[vertex] (a3) at (144:-1) {};
        \node[vertex] (a4) at (-144:-1) {};
        \node[vertex] (a5) at (-72:-1) {};
        \begin{scope}[xshift=3cm]
            \node[vertex] (b1) at (0:1) {};
            \node[vertex] (b2) at (72:1) {};
            \node[vertex] (b3) at (144:1) {};
            \node[vertex] (b4) at (-144:1) {};
            \node[vertex] (b5) at (-72:1) {};
        \end{scope}
        \draw[edge] (a1)edge(a2) (a1)edge(a3) (a1)edge(a4) (a1)edge(a5) (a2)edge(a3) (a2)edge(a4) (a2)edge(a5) (a3)edge(a5) (a4)edge(a5);
        \draw[edge] (b1)edge(b2) (b1)edge(b3) (b1)edge(b4) (b1)edge(b5) (b2)edge(b3) (b2)edge(b4) (b2)edge(b5) (b3)edge(b5) (b4)edge(b5);
        \draw[edge] (a3)edge(b4) (a4)edge(b3);
    \end{tikzpicture}
    \qquad
    \begin{tikzpicture}[scale=0.7]
        \node[vertex] (a1) at (0:-1) {};
        \node[vertex] (a2) at (72:-1) {};
        \node[vertex] (a3) at (144:-1) {};
        \node[vertex] (a4) at (-144:-1) {};
        \node[vertex] (a5) at (-72:-1) {};
        \begin{scope}[xshift=3cm]
            \node[vertex] (b1) at (-45:1) {};
            \node[vertex] (b2) at (45:1) {};
            \node[vertex] (b3) at (135:1) {};
            \node[vertex] (b4) at (-135:1) {};
            \node[vertex] (b5) at (90:0.4) {};
            \node[vertex] (b6) at (-90:0.4) {};
        \end{scope}
        \draw[edge] (a1)edge(a2) (a1)edge(a3) (a1)edge(a4) (a1)edge(a5) (a2)edge(a3) (a2)edge(a4) (a2)edge(a5) (a3)edge(a5) (a4)edge(a5);
        \draw[edge] (b1)edge(b2) (b1)edge(b4) (b1)edge(b5) (b1)edge(b6) (b2)edge(b3) (b2)edge(b5) (b2)edge(b6) (b3)edge(b5) (b3)edge(b6) (b4)edge(b5) (b4)edge(b6);
        \draw[edge] (a3)edge(b4) (a4)edge(b3);
    \end{tikzpicture}
    
    \begin{tikzpicture}[scale=0.5]
        \coordinate (o) at (0,0);
        \foreach \x in {1,2,3,4}
        {
            \begin{scope}[rotate around={\x*90:(o)}]
                \begin{scope}[xshift=2cm]
                    \node[vertex] (\x1) at (0:1) {};
                    \node[vertex] (\x2) at (90:1) {};
                    \node[vertex] (\x3) at (180:1) {};
                    \node[vertex] (\x4) at (270:1) {};
                \end{scope}
                \draw[edge] (\x1)edge(\x2) (\x1)edge(\x3) (\x1)edge(\x4) (\x2)edge(\x3) (\x2)edge(\x4) (\x3)edge(\x4);
            \end{scope}
        }
        \draw[edge] (12)edge(24) (22)edge(34) (32)edge(44) (42)edge(14);
        \draw[edge] (13)edge(33) (23)edge(43);
    \end{tikzpicture}
    \qquad
    \begin{tikzpicture}[scale=0.5]
        \coordinate (o) at (0,0);
        \foreach \x in {1,2,3,4}
        {
            \begin{scope}[rotate around={\x*90:(o)}]
                \begin{scope}[xshift=2cm]
                    \node[vertex] (\x1) at (-45:1) {};
                    \node[vertex] (\x2) at (45:1) {};
                    \node[vertex] (\x3) at (135:1) {};
                    \node[vertex] (\x4) at (-135:1) {};
                \end{scope}
                \draw[edge] (\x1)edge(\x2) (\x1)edge(\x3) (\x1)edge(\x4) (\x2)edge(\x3) (\x2)edge(\x4) (\x3)edge(\x4);
            \end{scope}
        }
        \draw[edge] (12)edge(21) (22)edge(31) (32)edge(41) (42)edge(11);
        \draw[edge] (13)edge(24) (23)edge(34) (33)edge(44) (43)edge(14);
    \end{tikzpicture}
    \caption{The only $4$-regular Ramanujan graphs with at most 16 vertices that are not rigid in $\mathbb{R}^2$.}
    \label{fig:nonrig-ramanujan}
\end{figure}
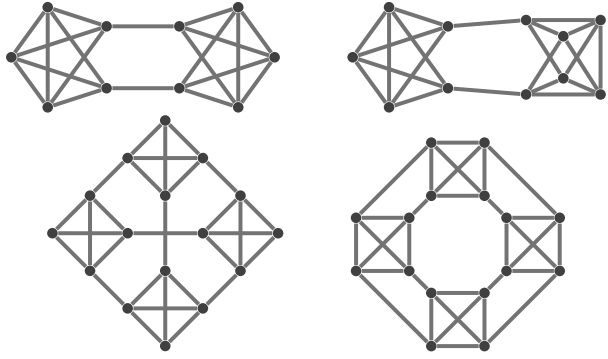
There are, however, plenty of rigid $4$-regular Ramanujan graphs with at most 16 vertices that are not globally rigid in $\mathbb{R}^2$ as shown in \Cref{tab:ngrig-ramanujan} (see also \cite{Zenodo4Regular} for a data set).
\begin{table}[ht]
    \centering
    \begin{tabular}{*{8}{r}}
        \toprule
         $|V|$  & 10 & 11 & 12 & 13 & 14  & 15   & 16\\
         graphs & 1  & 3  & 17 & 70 & 340 & 1573 & 7425\\\bottomrule
    \end{tabular}
    \caption{The number of $4$-regular Ramanujan graphs with $n \leq 16$ vertices that are rigid but not globally rigid in $\mathbb{R}^2$.}
    \label{tab:ngrig-ramanujan}
\end{table}

\subsection{Bipartite Ramanujan Graphs}\label{sec:comp:bipartite}
For bipartite Ramanujan graphs we can go slightly further, as \geng\ allows us to compute regular bipartite graphs directly. Using this, we were able to compute all the $k$-regular Ramanujan graphs for $k\in \{4,5,6,7\}$ up to 20 vertices (see \Cref{tab:bramanujan} and \cite{ZenodoRamanujan} for a data set); for $4$-regularity we even managed to compute until 22 vertices. 
\begin{table}[ht]
    \centering
    \begin{tabular}{rrrrr}
        \toprule
        $|V|$\textbackslash$k$  & 4 & 5 & 6 & 7 \\\midrule
        8 & 1 & - & - & - \\
        10 & 1 & 1 & - & - \\
        12 & 4 & 1 & 1 & - \\
        14 & 14 & 4 & 1 & 1 \\
        16 & 128 & 41 & 7 & 1 \\
        18 & 1973 & 1981 & 157 & 8 \\
        20 & 62447 & 304470 & 62616 & 725 \\
        22 & 2801916 & ? & ? & ? \\\bottomrule
    \end{tabular}
    \caption{Number of $k$-regular bipartite Ramanujan graphs with given number of vertices.}
    \label{tab:bramanujan}
\end{table}
We computed that if $k \geq 5$, every $k$-regular bipartite Ramanujan graph with at most 20 vertices is globally rigid in $\mathbb{R}^2$. We can combine this computational result with \Cref{thm:ramanujan} to obtain the following result.

\begin{corollary}
    Every 7-regular bipartite Ramanujan graph is globally rigid in $\mathbb{R}^2$.
\end{corollary}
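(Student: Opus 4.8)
The plan is to combine Theorem~\ref{thm:ramanujan} with the computational result stated just above, exploiting the fact that the order of a regular bipartite graph is heavily constrained. First I would record the basic structural observation: if $G$ is a $7$-regular bipartite graph with parts $A$ and $B$, then counting the edges incident to each side gives $7|A| = 7|B|$, so $|A| = |B|$ and the order $n = |A| + |B|$ is even. Moreover each vertex of $A$ has $7$ neighbours in $B$, so $|B| \geq 7$, and likewise $|A| \geq 7$; hence $n \geq 14$. Consequently every $7$-regular bipartite graph has order $n \in \{14, 16, 18, 20, 22, 24, \dots\}$.

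The argument then splits into exactly two ranges. For $n \geq 22$, the conclusion is immediate from Theorem~\ref{thm:ramanujan}, which already shows that every $7$-regular Ramanujan graph (bipartite or not) of order at least $22$ is globally rigid in $\mathbb{R}^2$. For the remaining even orders $n \in \{14, 16, 18, 20\}$, I would appeal to the bipartite enumeration summarised earlier, where it was verified directly that all $k$-regular bipartite Ramanujan graphs with $k \geq 5$ and at most $20$ vertices are globally rigid in $\mathbb{R}^2$. Since these two ranges together exhaust every admissible even value of $n$, the corollary follows.

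The only place where the argument could break down is the matching of the two ranges: the theorem begins at $n = 22$ while the computation ends at $n = 20$, so everything hinges on there being no $7$-regular bipartite graph of order $21$. This is guaranteed precisely by the parity observation above, since a regular bipartite graph must have even order. Beyond checking this coincidence of bounds I do not expect any genuine difficulty, as no additional rigidity-theoretic input is required — the two cited results already supply global rigidity on their respective ranges, and the bipartite constraint removes the intermediate odd value that would otherwise leave a gap.
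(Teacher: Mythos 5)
Your proposal is correct and is exactly the paper's argument: the corollary is obtained by combining Theorem~\ref{thm:ramanujan} (covering $n \geq 22$) with the computational verification that every $k$-regular bipartite Ramanujan graph with $k \geq 5$ and at most $20$ vertices is globally rigid in $\mathbb{R}^2$. The parity observation that a regular bipartite graph has even order, which closes the gap at $n = 21$, is left implicit in the paper but is the same bridging fact your write-up makes explicit.
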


For 4-regular bipartite Ramanujan graphs, the situation is slightly more complicated. We discovered that every 4-regular bipartite Ramanujan graph with up to 22 vertices is rigid in $\mathbb{R}^2$, and every 4-regular bipartite Ramanujan graph with up to 22 vertices that is not one of the two graphs pictured in \Cref{fig:ngrig-bramanujan} is globally rigid in $\mathbb{R}^2$.
\begin{figure}[ht]
    \centering
    \begin{tikzpicture}
        \node[vertex] (a1) at (0,0.4) {};
        \node[vertex] (a2) at (0,-0.4) {};
        \node[vertex] (a3) at (0.5,1.1) {};
        \node[vertex] (a4) at (0.5,-1.1) {};
        \node[vertex] (a5) at (0.8,0) {};
        \node[vertex] (a6) at (1.2,0.6) {};
        \node[vertex] (a7) at (1.2,-0.6) {};
        \draw[edge] (a1)edge(a3) (a1)edge(a4) (a1)edge(a6) (a1)edge(a7) (a2)edge(a3) (a2)edge(a4) (a2)edge(a6) (a2)edge(a7) (a5)edge(a3) (a5)edge(a4) (a5)edge(a6) (a5)edge(a7);
        \node[vertex] (c1) at (2,1) {};
        \node[vertex] (c2) at (2,-1) {};
        \node[vertex] (b1) at ($2*(c1)!(a1)!(c2)-(a1)$) {};
        \node[vertex] (b2) at ($2*(c1)!(a2)!(c2)-(a2)$) {};
        \node[vertex] (b3) at ($2*(c1)!(a3)!(c2)-(a3)$) {};
        \node[vertex] (b4) at ($2*(c1)!(a4)!(c2)-(a4)$) {};
        \node[vertex] (b5) at ($2*(c1)!(a5)!(c2)-(a5)$) {};
        \node[vertex] (b6) at ($2*(c1)!(a6)!(c2)-(a6)$) {};
        \node[vertex] (b7) at ($2*(c1)!(a7)!(c2)-(a7)$) {};
        \draw[edge] (b1)edge(b3) (b1)edge(b4) (b1)edge(b6) (b1)edge(b7) (b2)edge(b3) (b2)edge(b4) (b2)edge(b6) (b2)edge(b7) (b5)edge(b3) (b5)edge(b4) (b5)edge(b6) (b5)edge(b7);
        \draw[edge] (a3)edge(c1) (a6)edge(c1) (a4)edge(c2) (a7)edge(c2);
        \draw[edge] (b3)edge(c1) (b6)edge(c1) (b4)edge(c2) (b7)edge(c2);
    \end{tikzpicture}
    \qquad
    \begin{tikzpicture}[rotate=-90,scale=0.8]
        \begin{scope}[rotate around={20:(2,0)}]
            \node[vertex] (a1) at (0,0.4) {};
            \node[vertex] (a2) at (0,-0.4) {};
            \node[vertex] (a3) at (0.5,1.1) {};
            \node[vertex] (a4) at (0.5,-1.1) {};
            \node[vertex] (a5) at (0.8,0) {};
            \node[vertex] (a6) at (1.2,0.6) {};
            \node[vertex] (a7) at (1.2,-0.6) {};
        \end{scope}
        \draw[edge] (a1)edge(a3) (a1)edge(a4) (a1)edge(a6) (a1)edge(a7) (a2)edge(a3) (a2)edge(a4) (a2)edge(a6) (a2)edge(a7) (a5)edge(a3) (a5)edge(a4) (a5)edge(a6) (a5)edge(a7);
        \coordinate (c1) at (2,1);
        \coordinate (c2) at (2,-1);
        \node[vertex] (b1) at ($2*(c1)!(a1)!(c2)-(a1)$) {};
        \node[vertex] (b2) at ($2*(c1)!(a2)!(c2)-(a2)$) {};
        \node[vertex] (b3) at ($2*(c1)!(a3)!(c2)-(a3)$) {};
        \node[vertex] (b4) at ($2*(c1)!(a4)!(c2)-(a4)$) {};
        \node[vertex] (b5) at ($2*(c1)!(a5)!(c2)-(a5)$) {};
        \node[vertex] (b6) at ($2*(c1)!(a6)!(c2)-(a6)$) {};
        \node[vertex] (b7) at ($2*(c1)!(a7)!(c2)-(a7)$) {};
        \begin{scope}[xshift=2cm,yshift=1cm]
            \node[vertex] (d1) at (-0.75,0) {};
            \node[vertex] (d2) at (0.75,0) {};
            \node[vertex] (d3) at (-1,0.75) {};
            \node[vertex] (d4) at (1,0.75) {};
            \node[vertex] (d5) at (-1,1.5) {};
            \node[vertex] (d6) at (1,1.5) {};
            \node[vertex] (d7) at (-1,2.25) {};
            \node[vertex] (d8) at (1,2.25) {};
        \end{scope}
        \draw[edge] (d1)edge(d4) (d3)edge(d2) (d1)edge(d6) (d1)edge(d8) (d3)edge(d6) (d3)edge(d8)  (d5)edge(d2) (d5)edge(d4) (d5)edge(d6) (d5)edge(d8) (d7)edge(d2) (d7)edge(d4) (d7)edge(d6) (d7)edge(d8);
        
        \draw[edge] (b1)edge(b3) (b1)edge(b4) (b1)edge(b6) (b1)edge(b7) (b2)edge(b3) (b2)edge(b4) (b2)edge(b6) (b2)edge(b7) (b5)edge(b3) (b5)edge(b4) (b5)edge(b6) (b5)edge(b7);
        \draw[edge] (a4)edge(b4) (a7)edge(b7);
        \draw[edge] (a3)edge(d3) (a6)edge(d1) (b3)edge(d4) (b6)edge(d2);
    \end{tikzpicture}
    \caption{$4$-regular bipartite Ramanujan graphs that are rigid but not globally rigid in $\mathbb{R}^2$.}
    \label{fig:ngrig-bramanujan}
\end{figure}
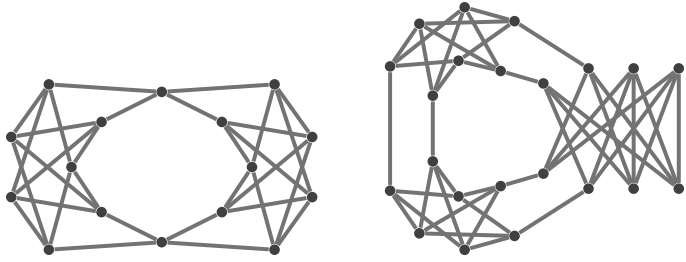

However, there do exist $4$-regular bipartite Ramanujan graphs that are not rigid in $\mathbb{R}^2$ with more than 22 vertices. A similar construction as in \Cref{fig:ngrig-bramanujan} yielded one such bipartite graph (see \Cref{fig:nonrig-bramanujan}).
\begin{figure}[ht]
    \centering
    \begin{tikzpicture}[scale=0.8]
        \coordinate (o) at (0,0);
        \foreach \x in {1,2,3,4}
        {
            \begin{scope}[rotate around=\x*90-45:(o)]
                \begin{scope}[xshift=3.5cm,xscale=-1]
                    \node[vertex] (\x1) at (0,0.4) {};
                    \node[vertex] (\x2) at (0,-0.4) {};
                    \node[vertex] (\x3) at (0.5,1.1) {};
                    \node[vertex] (\x4) at (0.5,-1.1) {};
                    \node[vertex] (\x5) at (0.8,0) {};
                    \node[vertex] (\x6) at (1.2,0.6) {};
                    \node[vertex] (\x7) at (1.2,-0.6) {};
                \end{scope}
            \end{scope}
            \draw[edge] (\x1)edge(\x3) (\x1)edge(\x4) (\x1)edge(\x6) (\x1)edge(\x7) (\x2)edge(\x3) (\x2)edge(\x4) (\x2)edge(\x6) (\x2)edge(\x7) (\x5)edge(\x3) (\x5)edge(\x4) (\x5)edge(\x6) (\x5)edge(\x7);
        }
        \node[vertex] (c1) at (0:1.9) {};
        \node[vertex] (c2) at (90:1.9) {};
        \node[vertex] (c3) at (180:1.9) {};
        \node[vertex] (c4) at (270:1.9) {};
        \draw[edge] (13)edge(c2) (16)edge(c3) (14)edge(c1) (17)edge(c4);
        \draw[edge] (23)edge(c3) (26)edge(c4) (24)edge(c2) (27)edge(c1);
        \draw[edge] (33)edge(c4) (36)edge(c1) (34)edge(c3) (37)edge(c2);
        \draw[edge] (43)edge(c1) (46)edge(c2) (44)edge(c4) (47)edge(c3);
    \end{tikzpicture}
    \caption{A $4$-regular bipartite Ramanujan graph that is not rigid in $\mathbb{R}^2$.}
    \label{fig:nonrig-bramanujan}
\end{figure}
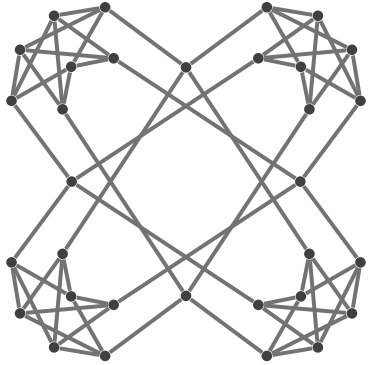

\subsection{Vertex-Transitive Ramanujan Graphs}\label{sec:comp:vt}
Since there are comparably few vertex-transitive regular graphs, we were able to use the precompiled lists from \cite{RH-Data} to obtain all the vertex-transitive Ramanujan graphs up to 47 vertices (see \cite{RH} for more details on how the list was compiled). Because of \Cref{thm:transitive}, we are only interested in the 4-regular vertex-transitive  Ramanujan graphs. \Cref{tab:vramanujan} shows how many 4-regular vertex-transitive Ramanujan graphs there exist with $n$ vertices for $7 \leq n \leq 47$ (see \cite{ZenodoRamanujan} for a data set).
\begin{table}[ht]
    \centering
    \begin{tabular}{*{50}{r}}
        \toprule
        $|V|$ & 10 & 11 & 12 & 13 & 14 & 15 & 16 & 17 & 18 & 19 & 20 & 21 & 22\\
        graphs & 4 & 2 & 11 & 3 & 6 & 8 & 16 & 4 & 16 & 4 & 28 & 11 & 11 \\\bottomrule
        \toprule
        $|V|$ & 23 & 24 & 25 & 26 & 27 & 28 & 29 & 30 & 31 & 32 & 33 & 34 & 35\\
        graphs & 5 & 74 & 9 & 16 & 16 & 34 & 7 & 52 & 7 & 80 & 14 & 23 & 15\\\bottomrule
        \toprule
        $|V|$  & 36 & 37 & 38 & 39 & 40 & 41 & 42 & 43 & 44 & 45 & 46 & 47 \\
        graphs & 116 & 9 & 27 & 19 & 133 & 10 & 81 & 10 & 65 & 33 & 36 & 11 \\\bottomrule
    \end{tabular}
    \caption{Number of $4$-regular vertex-transitive Ramanujan graphs with given number of vertices.}
    \label{tab:vramanujan}
\end{table}
Out of these graphs, there are six that are not rigid in $\mathbb{R}^2$ (see \Cref{fig:nonrig-vramanujan}), and only one that is rigid but not globally rigid in $\mathbb{R}^2$ (the graph on the left in \Cref{fig:secondSpecialCase}). We can also deduce from Theorem~\ref{t:jjs07} that any 4-regular vertex-transitive Ramanujan graph with 49, 50 or 51 vertices must be globally rigid in $\mathbb{R}^2$, since the number of vertices of any 4-regular vertex-transitive graph with every vertex in exactly one 4-clique must be divisible by 4. The only cases left to check are those with 48 or 52 vertices. It is possible that there are no non-rigid 4-regular vertex-transitive Ramanujan graphs with 52, or even 48, vertices, in which case the bound given by Theorem~\ref{thm:transitive} could potentially be reduced from 53.
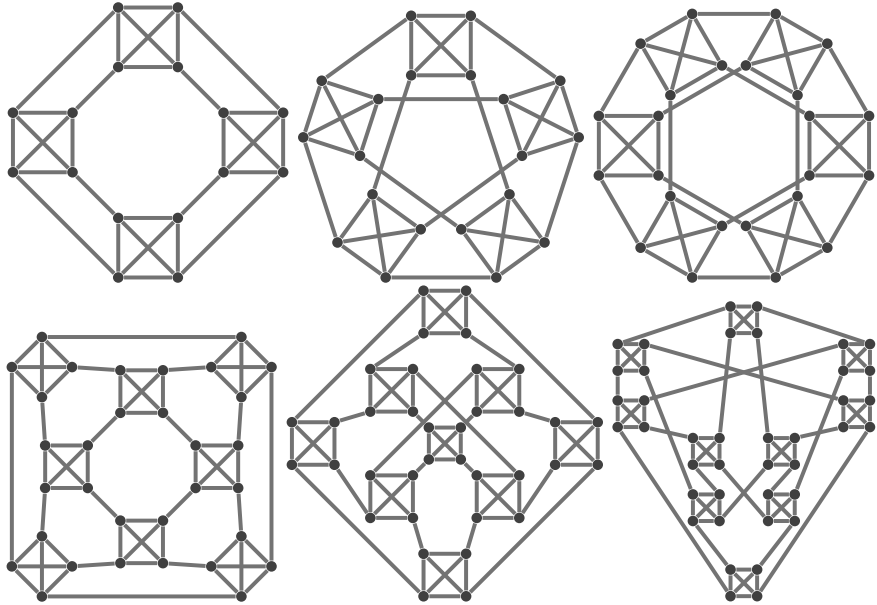
\begin{figure}[ht]
    \centering
    \begin{tikzpicture}[scale=0.7]
        \coordinate (o) at (0,0);
        \foreach \x in {1,2,3,4}
        {
            \begin{scope}[rotate around={\x*90:(o)}]
                \begin{scope}[xshift=2cm]
                    \node[vertex] (\x1) at (-45:0.8) {};
                    \node[vertex] (\x2) at (45:0.8) {};
                    \node[vertex] (\x3) at (135:0.8) {};
                    \node[vertex] (\x4) at (-135:0.8) {};
                \end{scope}
                \draw[edge] (\x1)edge(\x2) (\x1)edge(\x3) (\x1)edge(\x4) (\x2)edge(\x3) (\x2)edge(\x4) (\x3)edge(\x4);
            \end{scope}
        }
        \draw[edge] (12)edge(21) (22)edge(31) (32)edge(41) (42)edge(11);
        \draw[edge] (13)edge(24) (23)edge(34) (33)edge(44) (43)edge(14);
    \end{tikzpicture}
    \begin{tikzpicture}[scale=0.7,rotate=90]
        \coordinate (o) at (0,0);
        \foreach \x in {1,2,3,4,5}
        {
            \begin{scope}[rotate around={\x*72:(o)}]
                \begin{scope}[xshift=2cm]
                    \node[vertex] (\x1) at (-45:0.8) {};
                    \node[vertex] (\x2) at (45:0.8) {};
                    \node[vertex] (\x3) at (135:0.8) {};
                    \node[vertex] (\x4) at (-135:0.8) {};
                \end{scope}
                \draw[edge] (\x1)edge(\x2) (\x1)edge(\x3) (\x1)edge(\x4) (\x2)edge(\x3) (\x2)edge(\x4) (\x3)edge(\x4);
            \end{scope}
        }
        \draw[edge] (12)edge(21) (22)edge(31) (32)edge(41) (42)edge(51) (52)edge(11);
        \draw[edge] (13)edge(34) (23)edge(44) (33)edge(54) (43)edge(14) (53)edge(24);
    \end{tikzpicture}
    \begin{tikzpicture}[scale=0.7]
        \coordinate (o) at (0,0);
        \foreach \x in {1,2,3,4,5,6}
        {
            \begin{scope}[rotate around={\x*60:(o)}]
                \begin{scope}[xshift=2cm]
                    \node[vertex] (\x1) at (-45:0.8) {};
                    \node[vertex] (\x2) at (45:0.8) {};
                    \node[vertex] (\x3) at (135:0.8) {};
                    \node[vertex] (\x4) at (-135:0.8) {};
                \end{scope}
                \draw[edge] (\x1)edge(\x2) (\x1)edge(\x3) (\x1)edge(\x4) (\x2)edge(\x3) (\x2)edge(\x4) (\x3)edge(\x4);
            \end{scope}
        }
        \draw[edge] (12)edge(21) (22)edge(31) (32)edge(41) (42)edge(51) (52)edge(61) (62)edge(11);
        \draw[edge] (13)edge(34) (23)edge(44) (33)edge(54) (43)edge(64) (53)edge(14) (63)edge(24);
    \end{tikzpicture}
    
    \begin{tikzpicture}[scale=0.5]
        \coordinate (o) at (0,0);
        \foreach \x in {1,2,3,4}
        {
            \begin{scope}[rotate around={\x*90:(o)}]
                \begin{scope}[xshift=2cm]
                    \node[vertex] (\x1) at (-45:0.8) {};
                    \node[vertex] (\x2) at (45:0.8) {};
                    \node[vertex] (\x3) at (135:0.8) {};
                    \node[vertex] (\x4) at (-135:0.8) {};
                \end{scope}
                \draw[edge] (\x1)edge(\x2) (\x1)edge(\x3) (\x1)edge(\x4) (\x2)edge(\x3) (\x2)edge(\x4) (\x3)edge(\x4);
            \end{scope}
        }
        \foreach \x in {5,6,7,8}
        {
            \begin{scope}[rotate around={\x*90-45:(o)}]
                \begin{scope}[xshift=3.75cm]
                    \node[vertex] (\x1) at (-45:0.8) {};
                    \node[vertex] (\x2) at (45:0.8) {};
                    \node[vertex] (\x3) at (135:0.8) {};
                    \node[vertex] (\x4) at (-135:0.8) {};
                \end{scope}
                \draw[edge] (\x1)edge(\x2) (\x1)edge(\x3) (\x1)edge(\x4) (\x2)edge(\x3) (\x2)edge(\x4) (\x3)edge(\x4);
            \end{scope}
        }
        \draw[edge] (52)edge(61) (62)edge(71) (72)edge(81) (82)edge(51);
        \draw[edge] (13)edge(24) (23)edge(34) (33)edge(44) (43)edge(14);
        \draw[edge] (11)edge(53) (21)edge(63) (31)edge(73) (41)edge(83);
        \draw[edge] (12)edge(64) (22)edge(74) (32)edge(84) (42)edge(54);
    \end{tikzpicture}
    \begin{tikzpicture}[rotate=-135,scale=0.5]
        \coordinate (o) at (0,0);
        \node[vertex] (01) at (0:0.6) {};
        \node[vertex] (02) at (90:0.6) {};
        \node[vertex] (03) at (180:0.6) {};
        \node[vertex] (04) at (270:0.6) {};
        \draw[edge] (01)edge(02) (01)edge(03) (01)edge(04) (02)edge(03) (02)edge(04) (03)edge(04);
        \foreach \x in {1,2,3,4}
        {
            \begin{scope}[rotate around={\x*90:(o)}]
                \begin{scope}[xshift=2cm]
                    \node[vertex] (\x1) at (0:0.8) {};
                    \node[vertex] (\x2) at (90:0.8) {};
                    \node[vertex] (\x3) at (180:0.8) {};
                    \node[vertex] (\x4) at (270:0.8) {};
                \end{scope}
                \draw[edge] (\x1)edge(\x2) (\x1)edge(\x3) (\x1)edge(\x4) (\x2)edge(\x3) (\x2)edge(\x4) (\x3)edge(\x4);
            \end{scope}
        }
        \draw[edge] (13)edge(02) (33)edge(04) (23)edge(03) (43)edge(01);
        \foreach \x in {5,6,7,8}
        {
            \begin{scope}[rotate around={\x*90-45:(o)}]
                \begin{scope}[xshift=3.5cm]
                    \node[vertex] (\x1) at (-45:0.8) {};
                    \node[vertex] (\x2) at (45:0.8) {};
                    \node[vertex] (\x3) at (135:0.8) {};
                    \node[vertex] (\x4) at (-135:0.8) {};
                \end{scope}
                \draw[edge] (\x1)edge(\x2) (\x1)edge(\x3) (\x1)edge(\x4) (\x2)edge(\x3) (\x2)edge(\x4) (\x3)edge(\x4);
            \end{scope}
        }
        \draw[edge] (52)edge(61) (62)edge(71) (72)edge(81) (82)edge(51);
        \draw[edge] (12)edge(34) (22)edge(44);
        \draw[edge] (21)edge(74) (31)edge(73);
        \draw[edge] (14)edge(53) (42)edge(54);
        \draw[edge] (11)edge(64) (24)edge(63);
        \draw[edge] (41)edge(83) (32)edge(84);
    \end{tikzpicture}
    \begin{tikzpicture}[scale=0.5]
        \coordinate (o) at (0,0);
        \foreach \x [count=\i] in {(0,-0.5),(-1,1.5),(1,1.5),(-1,3),(1,3),(-3,4),(3,4),(-3,5.5),(3,5.5),(0,6.5)}
        {
            \begin{scope}[shift=\x]
                \node[vertex] (\i1) at (-45:0.5) {};
                \node[vertex] (\i2) at (45:0.5) {};
                \node[vertex] (\i3) at (135:0.5) {};
                \node[vertex] (\i4) at (-135:0.5) {};
            \end{scope}
            \draw[edge] (\i1)edge(\i2) (\i1)edge(\i3) (\i1)edge(\i4) (\i2)edge(\i3) (\i2)edge(\i4) (\i3)edge(\i4);
        }
        \draw[edge] (14)edge(64) (13)edge(24) (12)edge(31) (11)edge(71);
        \draw[edge] (21)edge(54) (22)edge(44) (34)edge(41) (33)edge(51);
        \draw[edge] (62)edge(93) (63)edge(84) (73)edge(82) (72)edge(91);
        \draw[edge] (61)edge(43) (81)edge(23) (94)edge(32) (74)edge(52);
        \draw[edge] (101)edge(53) (102)edge(92) (103)edge(83) (104)edge(42);
    \end{tikzpicture}
    \caption{Vertex-transitive Ramanujan graphs that are not rigid in $\mathbb{R}^2$.}
    \label{fig:nonrig-vramanujan}
\end{figure}

\section{Open problems}\label{sec:end}

It is currently unknown for exactly which values of $k$ the following three statements hold.

\textbf{(i) There exist only finitely many $k$-regular Ramanujan graphs that are not (globally) rigid in $\mathbb{R}^2$.} With Theorem \ref{thm:ramanujan}, we know that the statement is true for $k \geq 6$. We can also easily see that it is false for $k \leq 3$, as there are only three rigid cubic graphs (the complete graph $K_4$, the complete bipartite graph $K_{3,3}$ and the Cartesian product $K_2 \Box K_3$). However, the cases of $k \in \{4,5\}$ still remain open.

\textbf{(ii) All $k$-regular Ramanujan graphs are (globally) rigid in $\mathbb{R}^2$.}
As stated previously, we know that the statement is false for $k \leq 3$. The statement is also false for $k \in \{4,5\}$, as shown by Figures \ref{fig:VertexTransitiveSpecialCase} and \ref{fig:nonrig-ramanujan}. The statement is true for $k \geq 8$ by \Cref{specrigid}, so the remaining open cases are $k \in \{6,7\}$. We do know from our computational results (see \Cref{sec:comp}) that any 6- or 7-regular Ramanujan graph that is not (globally) rigid must have more than 14 vertices. This implies that the case of $k=7$ can be verified by computing all the 7-regular Ramanujan graphs with 16, 18 or 20 vertices. This is, however, currently beyond the computing power we have access to.
    
\textbf{(iii) There exist only finitely many $k$-regular Ramanujan graphs that are not 3-edge-connected.}
By \Cref{thm:bhgr}. this is equivalent to checking whether there are only finitely many $k$-regular Ramanujan graphs that are not body-hinge globally rigid in $\mathbb{R}^2$. The statement is obviously false for $k \leq 2$, and is true for $k \geq 4$ by \Cref{pro:edgeconn}. Hence, the only open case is $k = 3$. Using the computational methods laid out in \Cref{sec:comp}, we constructed a list of all the cubic Ramanujan graphs with 20 vertices or less. Within this list, we found only 4 cubic Ramanujan graphs with edge-connectivity 1; one with 10 vertices (see the graph on the left in \Cref{fig:cubicRamanujan}) and three with 12 vertices (see \Cref{fig:cubicRamanujanOneconnected}).
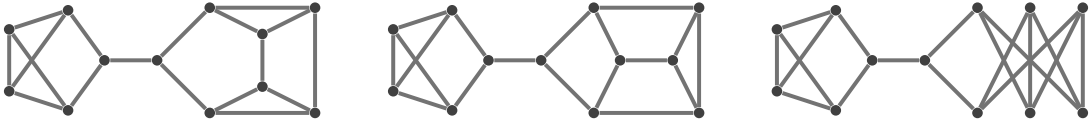
\begin{figure}[ht]
    \centering
    \begin{tikzpicture}[scale=0.7]
        \begin{scope}
            \node[vertex] (a1) at (0:1) {};
            \node[vertex] (a2) at (72:1) {};
            \node[vertex] (a3) at (144:1) {};
            \node[vertex] (a4) at (-144:1) {};
            \node[vertex] (a5) at (-72:1) {};
        \end{scope}
        \begin{scope}[xshift=4cm]
            \node[vertex] (b1) at (-1,-1) {};
            \node[vertex] (b2) at (1,-1) {};
            \node[vertex] (b3) at (0,-0.5) {};
            \node[vertex] (b4) at (-1,1) {};
            \node[vertex] (b5) at (1,1) {};
            \node[vertex] (b6) at (0,0.5) {};
            \node[vertex] (b7) at (-2,0) {};
        \end{scope}
        \draw[edge] (a1)edge(a2) (a1)edge(a5) (a2)edge(a3) (a2)edge(a4) (a3)edge(a4) (a3)edge(a5) (a4)edge(a5);
        \draw[edge] (b1)edge(b2) (b1)edge(b3) (b2)edge(b3) (b2)edge(b5) (b3)edge(b6) (b4)edge(b5) (b4)edge(b6) (b5)edge(b6);
         \draw[edge] (b1)edge(b7) (b4)edge(b7);
        \draw[edge] (a1)edge(b7);
    \end{tikzpicture}
    \qquad
    \begin{tikzpicture}[scale=0.7]
        \begin{scope}
            \node[vertex] (a1) at (0:1) {};
            \node[vertex] (a2) at (72:1) {};
            \node[vertex] (a3) at (144:1) {};
            \node[vertex] (a4) at (-144:1) {};
            \node[vertex] (a5) at (-72:1) {};
        \end{scope}
        \begin{scope}[xshift=4cm]
            \begin{scope}[rotate=-90]
                \node[vertex] (b1) at (-1,-1) {};
                \node[vertex] (b2) at (1,-1) {};
                \node[vertex] (b3) at (0,-0.5) {};
                \node[vertex] (b4) at (-1,1) {};
                \node[vertex] (b5) at (1,1) {};
                \node[vertex] (b6) at (0,0.5) {};
            \end{scope}
            \node[vertex] (b7) at (-2,0) {};
        \end{scope}
        \draw[edge] (a1)edge(a2) (a1)edge(a5) (a2)edge(a3) (a2)edge(a4) (a3)edge(a4) (a3)edge(a5) (a4)edge(a5);
        \draw[edge] (b1)edge(b3) (b2)edge(b3) (b1)edge(b4) (b2)edge(b5) (b3)edge(b6) (b4)edge(b5) (b4)edge(b6) (b5)edge(b6);
         \draw[edge] (b1)edge(b7) (b2)edge(b7);
        \draw[edge] (a1)edge(b7);
    \end{tikzpicture}
    \qquad
    \begin{tikzpicture}[scale=0.7]
        \begin{scope}
            \node[vertex] (a1) at (0:1) {};
            \node[vertex] (a2) at (72:1) {};
            \node[vertex] (a3) at (144:1) {};
            \node[vertex] (a4) at (-144:1) {};
            \node[vertex] (a5) at (-72:1) {};
        \end{scope}
        \begin{scope}[xshift=4cm]
            \node[vertex] (b1) at (-1,-1) {};
            \node[vertex] (b3) at (0,-1) {};
            \node[vertex] (b5) at (1,-1) {};
            \node[vertex] (b2) at (-1,1) {};
            \node[vertex] (b4) at (0,1) {};
            \node[vertex] (b6) at (1,1) {};
            \node[vertex] (b7) at (-2,0) {};
        \end{scope}
        \draw[edge] (a1)edge(a2) (a1)edge(a5) (a2)edge(a3) (a2)edge(a4) (a3)edge(a4) (a3)edge(a5) (a4)edge(a5);
        \draw[edge] (b1)edge(b4) (b1)edge(b6) (b3)edge(b2) (b3)edge(b4) (b3)edge(b6) (b5)edge(b2) (b5)edge(b4) (b5)edge(b6);
         \draw[edge] (b1)edge(b7) (b2)edge(b7);
        \draw[edge] (a1)edge(b7);
    \end{tikzpicture}
\caption{All the cubic Ramanujan graphs with edge-connectivity one and 12 vertices.}
\label{fig:cubicRamanujanOneconnected}
\end{figure}
As can be seen by the following result, these 4 graphs are in fact the only cubic Ramanujan graphs with edge-connectivity 1.

\begin{proposition}
A connected cubic graph with at least $22$ vertices and edge-connectivity 1 cannot be Ramanujan.
\end{proposition}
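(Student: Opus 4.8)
The plan is to exploit the bridge $e=uv$ guaranteed by edge-connectivity $1$. Deleting $e$ splits $G$ into two connected pieces $G_1 \ni u$ and $G_2 \ni v$; inside $G_i$ the bridge endpoint has degree $2$ while every other vertex has degree $3$, so $|V(G_i)|$ is odd and at least $5$. Since $|V(G)| \geq 22$, the larger piece, which I call $G_2$, satisfies $n_2 := |V(G_2)| \geq 11$. Rather than estimating $\lambda_2(G)$ head-on, I would produce an induced subgraph of $G$ that is a \emph{disjoint union} of two graphs, each of spectral radius exceeding $2\sqrt{2} = 2\sqrt{3-1}$, and then transfer this to $\lambda_2(G)$ by Cauchy interlacing (applied to the induced subgraph on $n-1 \geq 2$ vertices).

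The disconnection trick is to keep $G_1$ intact but delete the single vertex $v$ from $G_2$. Set $H_1 := G_1$ and $H_2 := G_2 - v$. Because the bridge is the only edge of $G$ joining $V(G_1)$ to $V(G_2)$, and $v \notin V(H_2)$, there are no edges of $G$ between $H_1$ and $H_2$; hence the subgraph of $G$ induced on $V(H_1) \cup V(H_2)$ is exactly the disjoint union $H_1 \sqcup H_2$. The two largest eigenvalues of $H_1 \sqcup H_2$ are $\lambda_1(H_1)$ and $\lambda_1(H_2)$, so its second-largest eigenvalue is at least $\min\{\lambda_1(H_1), \lambda_1(H_2)\}$, and interlacing gives $\lambda_2(G) \geq \min\{\lambda_1(H_1), \lambda_1(H_2)\}$.

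To finish I would lower-bound each spectral radius via $\lambda_1(H)^2 \geq \mathbf{1}^\top A(H)^2 \mathbf{1} / |V(H)| = \sum_x d_H(x)^2 / |V(H)|$. For $H_1 = G_1$, whose degrees are one $2$ and $n_1-1$ threes, this yields $\lambda_1(H_1) \geq \sqrt{9 - 5/n_1} \geq \sqrt{8} = 2\sqrt{2}$ for $n_1 \geq 5$; for $H_2 = G_2 - v$, where the two neighbours of $v$ drop to degree $2$, it yields $\lambda_1(H_2) \geq \sqrt{(9n_2 - 19)/(n_2 - 1)} \geq \sqrt{8}$ for $n_2 \geq 11$. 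The main obstacle is precisely that both estimates are tight, giving exactly $2\sqrt{2}$, in the extremal cases $n_1 = 5$ and $n_2 = 11$ (the balanced split $n_1=n_2=11$ being the genuinely delicate situation), whereas violating the Ramanujan bound requires \emph{strict} inequality. I would remove this obstacle by observing that equality in the Rayleigh estimate forces $\mathbf{1}$ to be an eigenvector of $A(H)^2$, i.e. the length-two walk count $\sum_{y \sim x} d_H(y)$ to be constant in $x$; but in both $H_1$ and $H_2$ a degree-$2$ vertex has walk count at most $6$, while, since each of these graphs contains at most two degree-$2$ vertices, every degree-$3$ vertex has walk count at least $7$. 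As both degree types occur, the walk count is non-constant, the bounds are strict, and $\lambda_2(G) \geq \min\{\lambda_1(H_1), \lambda_1(H_2)\} > 2\sqrt{2}$. Finally, since $G$ is connected and cubic, $\lambda_2(G)$ lies in $(2\sqrt{2}, 3)$, hence is neither $+3$ nor $-3$, so $G$ is not Ramanujan.
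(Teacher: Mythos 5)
Your proof is correct, and its skeleton is the same as the paper's: delete the bridge endpoint on the larger side, note that the resulting graph (which is $G$ minus one vertex) is the disjoint union of the intact smaller piece and the pruned larger piece, apply Cauchy interlacing to get $\lambda_2(G)\geq\min\{\lambda_1(H_1),\lambda_1(H_2)\}$, and finish by connectivity to rule out $\pm 3$. Where you genuinely differ is in how the two spectral radii are forced \emph{strictly} above $2\sqrt{2}$. The paper uses the plain average-degree bound $\lambda_1\geq \bar d$, which suffices only when the pieces are large enough ($n_1\geq 13$, resp.\ $n_2\geq 7$ in its labelling), and settles the two tight cases by invoking external results: a lemma of \cite{CG07} quantifying how far $\lambda_1$ exceeds the average degree in irregular graphs, and the known spectral radius of the subdivided $K_4$ from \cite{CGH09}. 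You instead use the stronger but still elementary root-mean-square bound $\lambda_1^2\geq \sum_x d(x)^2/|V|$, which lands exactly on $8$ in the extremal cases $n_1=5$ and $n_2=11$, and you restore strictness via the equality condition in the Rayleigh quotient: equality would make the length-two walk count $\sum_{y\sim x}d(y)$ constant, which is impossible because degree-2 vertices have walk count at most $6$ while (each piece containing at most two degree-2 vertices) degree-3 vertices have walk count at least $7$. This makes your argument self-contained and uniform over all piece sizes, at the price of carrying out the equality analysis; the paper's argument avoids that analysis but outsources the delicate cases to two citations. One small imprecision on your side: the two largest eigenvalues of $H_1\sqcup H_2$ need not be $\lambda_1(H_1)$ and $\lambda_1(H_2)$ (the runner-up could be $\lambda_2$ of the same piece), but the inequality you actually use, $\lambda_2(H_1\sqcup H_2)\geq\min\{\lambda_1(H_1),\lambda_1(H_2)\}$, holds regardless, since both values occur in the spectrum of the union.
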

\begin{proof} 
Let $G=(V,E)$ be a cubic graph with edge-connectivity one and $n\geq 22$ vertices. Denote by $a_1a_2$ a cut-edge of it. Define $G_1$ as the subgraph induced by all those vertices $x$ such that the shortest path between $x$ and $a_2$ goes through $a_1$. The subgraph $G_2$ can be defined similarly. Denote by $n_j$ the number of vertices in $G_j$ for $j=1,2$ and assume that $n_1\geq n_2$. We also have that $n_1+n_2=n$, $n_1\geq 11$, $n_2\geq 5$, and both $n_1$ and $n_2$ are odd.
    
Denote by $H_1$ the subgraph of $G$ obtained from $G_1$ by removing $a_1$. Take the disjoint union $H$ of $H_1$ and $G_2$. This is an induced subgraph of $G$ and by Cauchy eigenvalue interlacing, we have that
\begin{equation}\label{eq:cubicinterlace}
    \lambda_2(G)\geq \lambda_2(H)\geq \lambda_1(H) = \min \{\lambda_1(H_1),\lambda_1(G_2)\}.
\end{equation}
We recall that the largest eigenvalue of a graph is always at least its average degree (with equality if the graph is regular). The subgraph $H_1$ has average degree $\frac{3(n_1-3)+4}{n_1-1}=3-\frac{2}{n_1-1}$. If $n_1\geq 13$, this is at least $3-1/6>2\sqrt{2}$. Therefore, $\lambda_1(H_1)>2\sqrt{2}$ in this case. If $n_1=11$, then the average degree of $H_1$ is $2.8$. Using \cite[Lemma 5]{CG07}, we get that $\lambda_1(H_1)-2.8>\frac{1}{100}\cdot \frac{2(n_1-3)^2}{3n_1-5}>0.45$, leading to $\lambda_1(H_1)>2.845>2\sqrt{2}$ in this case as well. If $n_2=5$, then (since $G_2$ is the graph formed from $K_4$ by subdividing one edge) $\lambda_1(G_2)$ equals the largest root of the cubic equation $x^3-x^2-6x+2=0$ and is larger than $2.85>2\sqrt{2}$ (see \cite[Lemma 6]{CGH09} for example). If $n_2\geq 7$, then the average degree of $G_2$ is $3-\frac{1}{n_2}\geq 3-\frac{1}{7}>2\sqrt{2}$. Hence, by inequality \eqref{eq:cubicinterlace}, the graph $G$ cannot be Ramanujan when $n\geq 22$.
\end{proof}

Unlike with the edge-connectivity 1 case, we found a lot of cubic Ramanujan graphs with edge-connectivity 2 and at most 20 vertices; for example, there are exactly 85046 cubic Ramanujan graphs with 20 vertices and edge-connectivity 2, (see \Cref{fig:cubicRamanujanTwoconnected} for two such graphs). We believe, however, that there exist only finitely many cubic Ramanujan graphs with edge-connectivity~2, which we leave as an open question.

\begin{figure}[ht]
    \centering
    \begin{tikzpicture}[yscale=0.6]
        \begin{scope}[xshift=-4cm]
            \node[vertex] (a1) at (-1,-1) {};
            \node[vertex] (a3) at (0,-1) {};
            \node[vertex] (a5) at (1,1) {};
            \node[vertex] (a2) at (-1,1) {};
            \node[vertex] (a4) at (0,1) {};
            \node[vertex] (a6) at (1,-1) {};
            \node[vertex] (a7) at (-0.5,-2) {};
            \node[vertex] (a8) at (-0.5,2) {};
            \node[vertex] (a9) at (1.5,-2) {};
            \node[vertex] (a10) at (1.5,2) {};
        \end{scope}
        \begin{scope}[xscale=-1]
            \node[vertex] (b1) at (-1,-1) {};
            \node[vertex] (b3) at (0,-1) {};
            \node[vertex] (b5) at (1,1) {};
            \node[vertex] (b2) at (-1,1) {};
            \node[vertex] (b4) at (0,1) {};
            \node[vertex] (b6) at (1,-1) {};
            \node[vertex] (b7) at (-0.5,-2) {};
            \node[vertex] (b8) at (-0.5,2) {};
            \node[vertex] (b9) at (1.5,-2) {};
            \node[vertex] (b10) at (1.5,2) {};
        \end{scope}
        \draw[edge] (a1)edge(a2) (a1)edge(a4) (a3)edge(a2) (a3)edge(a6) (a5)edge(a4) (a5)edge(a6);
        \draw[edge] (a1)edge(a7) (a3)edge(a7) (a7)edge(a9) (a6)edge(a9);
        \draw[edge] (a2)edge(a8) (a4)edge(a8) (a8)edge(a10) (a5)edge(a10);
        \draw[edge] (b1)edge(b2) (b1)edge(b4) (b3)edge(b2) (b3)edge(b6) (b5)edge(b4) (b5)edge(b6);
        \draw[edge] (b1)edge(b7) (b3)edge(b7) (b7)edge(b9) (b6)edge(b9);
        \draw[edge] (b2)edge(b8) (b4)edge(b8) (b8)edge(b10) (b5)edge(b10);
        \draw[edge] (a9)edge(b9) (a10)edge(b10);
    \end{tikzpicture}
    \qquad
    \begin{tikzpicture}[xscale=0.75,yscale=0.5]
        \begin{scope}[xshift=-6cm]
            \node[vertex] (a1) at (-1,-1) {};
            \node[vertex] (a3) at (0,-1) {};
            \node[vertex] (a5) at (1,-1) {};
            \node[vertex] (a2) at (-1,1) {};
            \node[vertex] (a4) at (0,1) {};
            \node[vertex] (a6) at (1,1) {};
            \node[vertex] (a7) at (-0.5,-2) {};
            \node[vertex] (a8) at (-0.5,2) {};
            \node[vertex] (a9) at (1.5,-2) {};
            \node[vertex] (a10) at (1.5,2) {};
            \node[vertex] (a11) at (2.5,-2.5) {};
            \node[vertex] (a12) at (2.5,2.5) {};
        \end{scope}
        \begin{scope}[xscale=-1]
            \node[vertex] (b1) at (-1,-1) {};
            \node[vertex] (b3) at (0,-1) {};
            \node[vertex] (b5) at (1,1.5) {};
            \node[vertex] (b2) at (-1,1) {};
            \node[vertex] (b4) at (0,1) {};
            \node[vertex] (b6) at (1,-1.5) {};
            \node[vertex] (b7) at (2,2) {};
            \node[vertex] (b8) at (2,-2) {};
        \end{scope}
        \draw[edge] (a1)edge(a2) (a1)edge(a4) (a3)edge(a2) (a3)edge(a4) (a5)edge(a6);
        \draw[edge] (a1)edge(a7) (a3)edge(a9) (a5)edge(a7) (a5)edge(a9) (a7)edge(a11) (a9)edge(a11);
        \draw[edge] (a2)edge(a8) (a4)edge(a10) (a6)edge(a8) (a6)edge(a10) (a8)edge(a12) (a10)edge(a12);
        \draw[edge] (b1)edge(b2) (b1)edge(b4) (b1)edge(b6) (b3)edge(b2) (b3)edge(b4) (b3)edge(b6) (b5)edge(b2) (b5)edge(b4);
        \draw[edge] (b5)edge(b7) (b6)edge(b8) (b7)edge(b8);
        \draw[edge] (a11)edge(b8) (a12)edge(b7);
    \end{tikzpicture}
    
\caption{Some examples for cubic Ramanujan graphs with edge-connectivity two and 20 vertices.}
\label{fig:cubicRamanujanTwoconnected}
\end{figure}
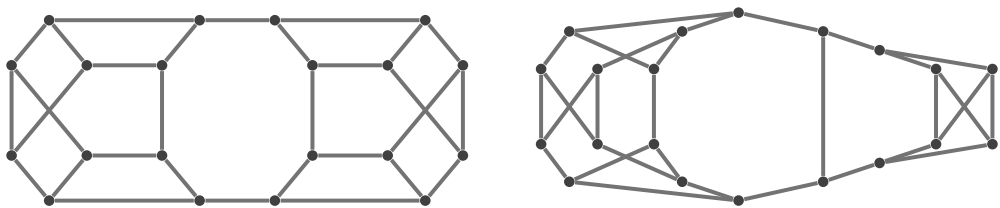

\par\bigskip
\noindent
\textbf{Acknowledgments}\\
Cioab\u{a} is supported by the National Science Foundation grant CIF-1815922.
Dewar and Grasegger were supported by the Austrian Science Fund (FWF): P31888.
Gu is supported by a grant from the Simons Foundation (522728).

\small{
\bibliographystyle{plainurl}
\bibliography{ref}

\begin{thebibliography}{10}

\bibitem{AC}
N.~Alon and F.~R.~K. Chung.
\newblock Explicit construction of linear sized tolerant networks.
\newblock {\em Discrete Mathematics}, 72(1--3):15--19, 1988.
\newblock \href {https://doi.org/10.1016/0012-365X(88)90189-6}
  {\path{doi:10.1016/0012-365X(88)90189-6}}.

\bibitem{AsRo78}
L.~Asimov and B.~Roth.
\newblock The rigidity of graphs.
\newblock {\em Transactions of the American Mathematical Society},
  245:279--289, 1978.
\newblock \href {https://doi.org/10.1090/S0002-9947-1978-0511410-9}
  {\path{doi:10.1090/S0002-9947-1978-0511410-9}}.

\bibitem{BanIto73}
E.~Bannai and T.~Ito.
\newblock On finite moore graphs.
\newblock {\em Journal of the Faculty of Science, University of Tokyo Section 1
  A}, 20:191--208, 1973.
\newblock \href {https://doi.org/10.15083/00039786}
  {\path{doi:10.15083/00039786}}.

\bibitem{CGHL92}
P.~A. Catlin, J.~W. Grossman, A.~M. Hobbs, and H.-J. Lai.
\newblock Fractional arboricity, strength and principal partitions in graphs
  and matroids.
\newblock {\em Discrete Applied Mathematics}, 40:285--302, 1992.
\newblock \href {https://doi.org/10.1016/0166-218X(92)90002-R}
  {\path{doi:10.1016/0166-218X(92)90002-R}}.

\bibitem{Cio10}
S.~M. Cioab\u{a}.
\newblock Eigenvalues and edge-connectivity of regular graphs.
\newblock {\em Linear Algebra and its Applications}, 432(1):458--470, 2010.
\newblock \href {https://doi.org/10.1016/j.laa.2009.08.029}
  {\path{doi:10.1016/j.laa.2009.08.029}}.

\bibitem{ZenodoRamanujan}
S.~M. Cioab\u{a}, S.~Dewar, G.~Grasegger, and X.~Gu.
\newblock Ramanujan graphs with degree 3, 4, 5, 6, or 7, 2022.
\newblock [Data set].
\newblock \href {https://doi.org/10.5281/zenodo.6579837}
  {\path{doi:10.5281/zenodo.6579837}}.

\bibitem{Zenodo4Regular}
S.~M. Cioab\u{a}, S.~Dewar, G.~Grasegger, and X.~Gu.
\newblock Rigidity of 4-regular ramanujan graphs, 2022.
\newblock [Data set].
\newblock \href {https://doi.org/10.5281/zenodo.6579718}
  {\path{doi:10.5281/zenodo.6579718}}.

\bibitem{CDG21}
S.~M. Cioab\u{a}, S.~Dewar, and X.~Gu.
\newblock Spectral conditions for graph rigidity in the euclidean plane.
\newblock {\em Discrete Mathematics}, 344:112527, 2021.
\newblock \href {https://doi.org/10.1016/j.disc.2021.112527}
  {\path{doi:10.1016/j.disc.2021.112527}}.

\bibitem{CG07}
S.~M. Cioab\u{a} and D.~A. Gregory.
\newblock Large matchings from eigenvalues.
\newblock {\em Linear Algebra and its Applications}, 422:308--317, 2007.
\newblock \href {https://doi.org/10.1016/j.laa.2006.10.020}
  {\path{doi:10.1016/j.laa.2006.10.020}}.

\bibitem{CGH09}
S.~M. Cioab\u{a}, D.A. Gregory, and W.H. Haemers.
\newblock Matchings in regular graphs from eigenvalues.
\newblock {\em Journal of Combinatorial Theory, Series B}, 99:287--297, 2009.
\newblock \href {https://doi.org/10.1016/j.jctb.2008.06.008}
  {\path{doi:10.1016/j.jctb.2008.06.008}}.

\bibitem{CiGu16}
S.~M. Cioab\u{a} and X.~Gu.
\newblock Connectivity, toughness, spanning trees of bounded degrees, and
  spectrum of regular graphs.
\newblock {\em Czechoslovak Mathematical Journal}, 66:913--924, 2016.
\newblock \href {https://doi.org/10.1007/s10587-016-0300-z}
  {\path{doi:10.1007/s10587-016-0300-z}}.

\bibitem{CiWo12}
S.~M. Cioab\u{a} and W.~Wong.
\newblock Edge-disjoint spanning trees and eigenvalues of regular graphs.
\newblock {\em Linear Algebra and its Applications}, 437:630--647, 2012.
\newblock \href {https://doi.org/10.1016/j.laa.2012.03.013}
  {\path{doi:10.1016/j.laa.2012.03.013}}.

\bibitem{Conn91}
R.~Connelly.
\newblock On generic global rigidity.
\newblock In {\em Applied Geometry and Discrete Mathematics: The Victor Klee
  Festschrift}, volume~4 of {\em DIMACS, Series in Discrete Mathematics and
  Theoretical Computer Science}, pages 147--155. American Mathematical Society,
  Providence, 1991.
\newblock \href {https://doi.org/10.1090/dimacs/004}
  {\path{doi:10.1090/dimacs/004}}.

\bibitem{Conn05}
R.~Connelly.
\newblock Generic global rigidity.
\newblock {\em Discrete \& Computational Geometry}, 33:549--563, 2005.
\newblock \href {https://doi.org/10.1007/s00454-004-1124-4}
  {\path{doi:10.1007/s00454-004-1124-4}}.

\bibitem{ConnJorW13}
R.~Connelly, T.~Jordán, and W.~Whiteley.
\newblock Generic global rigidity of body–bar frameworks.
\newblock {\em Journal of Combinatorial Theory, Series B}, 103(6):689--705,
  2013.
\newblock \href {https://doi.org/10.1016/j.jctb.2013.09.002}
  {\path{doi:10.1016/j.jctb.2013.09.002}}.

\bibitem{Cunn85}
W.~H. Cunningham.
\newblock Optimal attack and reinforcement of a network.
\newblock {\em Journal of the ACM}, 32:549--561, 1985.
\newblock \href {https://doi.org/10.1145/3828.3829}
  {\path{doi:10.1145/3828.3829}}.

\bibitem{Dam73}
R.~M. Damerell.
\newblock On moore graphs.
\newblock {\em Proceedings of the Cambridge Philosophical Society},
  74(2):227--236, 1973.
\newblock \href {https://doi.org/10.1017/S0305004100048015}
  {\path{doi:10.1017/S0305004100048015}}.

\bibitem{Fiedler}
M.~Fiedler.
\newblock Algebraic connectivity of graphs.
\newblock {\em Czechoslovak Mathematical Journal}, 23(2):298--305, 1973.
\newblock \href {https://doi.org/10.21136/CMJ.1973.101168}
  {\path{doi:10.21136/CMJ.1973.101168}}.

\bibitem{GortlerThurstonHealey10}
S.~J. Gortler, A.~Healy, and D.~Thurston.
\newblock Characterizing generic global rigidity.
\newblock {\em American Journal of Mathematics}, 132(4):897--939, 2010.
\newblock \href {https://doi.org/10.1353/ajm.0.0132}
  {\path{doi:10.1353/ajm.0.0132}}.

\bibitem{Gu16}
X.~Gu.
\newblock Spectral conditions for edge connectivity and packing spanning trees
  in multigraphs.
\newblock {\em Linear Algebra and its Applications}, 493:82--90, 2016.
\newblock \href {https://doi.org/10.1016/j.laa.2015.11.038}
  {\path{doi:10.1016/j.laa.2015.11.038}}.

\bibitem{Gu18}
X.~Gu.
\newblock Spanning rigid subgraph packing and sparse subgraph covering.
\newblock {\em SIAM Journal on Discrete Mathematics}, 32(2):1305--1313, 2018.
\newblock \href {https://doi.org/10.1137/17M1134196}
  {\path{doi:10.1137/17M1134196}}.

\bibitem{GLLY12}
X.~Gu, H.-J. Lai, P.~Li, and S.~Yao.
\newblock Edge-disjoint spanning trees, edge connectivity and eigenvalues in
  graphs.
\newblock {\em Journal of Graph Theory}, 81(1):16--29, 2016.
\newblock \href {https://doi.org/10.1002/jgt.21857}
  {\path{doi:10.1002/jgt.21857}}.

\bibitem{Gusf83}
D.~Gusfield.
\newblock Connectivity and edge-disjoint spanning trees.
\newblock {\em Information Processing Letters}, 16:87--89, 1983.
\newblock \href {https://doi.org/10.1016/0020-0190(83)90031-5}
  {\path{doi:10.1016/0020-0190(83)90031-5}}.

\bibitem{Hendrickson92}
B.~Hendrickson.
\newblock Conditions for unique graph realizations.
\newblock {\em SIAM Journal on Computing}, 12:65--84, 1992.
\newblock \href {https://doi.org/10.1137/0221008} {\path{doi:10.1137/0221008}}.

\bibitem{HoffSing60}
A.~J. Hoffman and R.~R. Singleton.
\newblock On moore graphs with diameter 2 and 3.
\newblock {\em IBM Journal of Research and Development}, 4(5):497--504, 1960.
\newblock \href {https://doi.org/10.1147/rd.45.0497}
  {\path{doi:10.1147/rd.45.0497}}.

\bibitem{RH}
D.~Holt and G.~Royle.
\newblock A census of small transitive groups and vertex-transitive graphs.
\newblock {\em Journal of Symbolic Computation}, 101:51--60, 2020.
\newblock \href {https://doi.org/10.1016/j.jsc.2019.06.006}
  {\path{doi:10.1016/j.jsc.2019.06.006}}.

\bibitem{HGLL16}
Y.~Hong, X.~Gu, H.-J. Lai, and Q.~Liu.
\newblock Fractional spanning tree packing, forest covering and eigenvalues.
\newblock {\em Discrete Applied Mathematics}, 213:219--223, 2016.
\newblock \href {https://doi.org/10.1016/j.dam.2016.04.027}
  {\path{doi:10.1016/j.dam.2016.04.027}}.

\bibitem{JaJo05}
B.~Jackson and T.~Jord\'an.
\newblock Connected rigidity matroids and unique realizations of graphs.
\newblock {\em Journal of Combinatorial Theory, Series B}, 94:1--29, 2005.
\newblock \href {https://doi.org/10.1016/j.jctb.2004.11.002}
  {\path{doi:10.1016/j.jctb.2004.11.002}}.

\bibitem{JaJo09}
B.~Jackson and T.~Jord\'an.
\newblock A sufficient connectivity condition for generic rigidity in the
  plane.
\newblock {\em Discrete Applied Mathematics}, 157:1965--1968, 2009.
\newblock \href {https://doi.org/10.1016/j.dam.2008.12.003}
  {\path{doi:10.1016/j.dam.2008.12.003}}.

\bibitem{JaJo10}
B.~Jackson and T.~Jord\'an.
\newblock The generic rank of body–bar-and-hinge frameworks.
\newblock {\em European Journal of Combinatorics}, 31(2):574--588, 2010.
\newblock \href {https://doi.org/10.1016/j.ejc.2009.03.030}
  {\path{doi:10.1016/j.ejc.2009.03.030}}.

\bibitem{JacksonNixon19}
B.~Jackson and A.~Nixon.
\newblock Global rigidity of generic frameworks on the cylinder.
\newblock {\em Journal of Combinatorial Theory, Series B}, 139:193--229, 2019.
\newblock \href {https://doi.org/10.1016/j.jctb.2019.03.002}
  {\path{doi:10.1016/j.jctb.2019.03.002}}.

\bibitem{JSS07}
B.~Jackson, B.~Servatius, and H.~Servatius.
\newblock The 2-dimensional rigidity of certain families of graphs.
\newblock {\em Journal of Graph Theory}, 54(2):154--166, 2007.
\newblock \href {https://doi.org/10.1002/jgt.20196}
  {\path{doi:10.1002/jgt.20196}}.

\bibitem{JordKirTan16}
T.~Jord\'an, C.~Kir\'aly, and S.~Tanigawa.
\newblock Generic global rigidity of body-hinge frameworks.
\newblock {\em Journal of Combinatorial Theory, Series B}, 117:59--76, 2016.
\newblock \href {https://doi.org/10.1016/j.jctb.2015.11.003}
  {\path{doi:10.1016/j.jctb.2015.11.003}}.

\bibitem{KS06}
M.~Krivelevich and B.~Sudakov.
\newblock Pseudo-random graphs.
\newblock In {\em More sets, graphs and numbers}, volume~15 of {\em Bolyai
  Society Mathematical Studies}, pages 199--262. Springer, Berlin, 2006.
\newblock \href {https://doi.org/10.1007/978-3-540-32439-3_10}
  {\path{doi:10.1007/978-3-540-32439-3_10}}.

\bibitem{LaLa91}
H.-J. Lai and H.~Y. Lai.
\newblock A note on uniformly dense matroids.
\newblock {\em Utilitas Mathematica}, 40:251--256, 1991.

\bibitem{Lama70}
G.~Laman.
\newblock On graphs and rigidity of plane skeletal structures.
\newblock {\em Journal of Engineering Mathematics}, 4:331--340, 1970.
\newblock \href {https://doi.org/10.1007/BF01534980}
  {\path{doi:10.1007/BF01534980}}.

\bibitem{LHGL14}
Q.~Liu, Y.~Hong, X.~Gu, and H.-J. Lai.
\newblock Note on edge-disjoint spanning trees and eigenvalues.
\newblock {\em Linear Algebra and its Applications}, 458:128--133, 2014.
\newblock \href {https://doi.org/10.1016/j.laa.2014.05.044}
  {\path{doi:10.1016/j.laa.2014.05.044}}.

\bibitem{LiHL13}
Q.~Liu, Y.~Hong, and H.-J. Lai.
\newblock Edge-disjoint spanning trees and eigenvalues.
\newblock {\em Linear Algebra and its Applications}, 444:146--151, 2014.
\newblock \href {https://doi.org/10.1016/j.laa.2013.11.039}
  {\path{doi:10.1016/j.laa.2013.11.039}}.

\bibitem{Lovasz19}
L.~Lov\'asz.
\newblock {\em Graphs and geometry}, volume~65 of {\em Colloquium
  Publications}.
\newblock American Mathematical Society, 2019.
\newblock \href {https://doi.org/10.1090/coll/065}
  {\path{doi:10.1090/coll/065}}.

\bibitem{LoYe82}
L.~Lov\'asz and Y.~Yemini.
\newblock On generic rigidity in the plane.
\newblock {\em SIAM Journal on Algebraic Discrete Methods}, 3(1):91--98, 1982.
\newblock \href {https://doi.org/10.1137/0603009} {\path{doi:10.1137/0603009}}.

\bibitem{LPS88}
A.~Lubotzky, R.~Phillips, and P.~Sarnak.
\newblock Ramanujan graphs.
\newblock {\em Combinatorica}, 8(3):261--277, 1988.
\newblock \href {https://doi.org/10.1007/BF02126799}
  {\path{doi:10.1007/BF02126799}}.

\bibitem{morg92}
M.~Morgenstern.
\newblock Existence and explicit constructions of $q + 1$ regular ramanujan
  graphs for every prime power $q$.
\newblock {\em Journal of Combinatorial Theory, Series B}, 62(1):44--62, 1994.
\newblock \href {https://doi.org/10.1006/jctb.1994.1054}
  {\path{doi:10.1006/jctb.1994.1054}}.

\bibitem{Murty03}
M.~R. Murty.
\newblock Ramanujan graphs.
\newblock {\em Journal of the Ramanujan Mathematical Society}, 18:1--20, 2003.

\bibitem{Nash61}
C.~St. J.~A. Nash-Williams.
\newblock Edge-disjoint spanning trees of finite graphs.
\newblock {\em Journal of the London Mathematical Society}, 36:445--450, 1961.
\newblock \href {https://doi.org/10.1112/jlms/s1-36.1.445}
  {\path{doi:10.1112/jlms/s1-36.1.445}}.

\bibitem{nilli}
A.~Nilli.
\newblock On the second eigenvalue of a graph.
\newblock {\em Discrete Mathematics}, 91(2):207--210, 1991.
\newblock \href {https://doi.org/10.1016/0012-365X(91)90112-F}
  {\path{doi:10.1016/0012-365X(91)90112-F}}.

\bibitem{NixonOwenPower12}
A.~Nixon, J.~C. Owen, and S.~C. Power.
\newblock Rigidity of frameworks supported on surfaces.
\newblock {\em SIAM Journal on Discrete Mathematics}, 26(4):1733--1757, 2012.
\newblock \href {https://doi.org/10.1137/110848852}
  {\path{doi:10.1137/110848852}}.

\bibitem{NixonOwenPower14}
A.~Nixon, J.~C. Owen, and S.~C. Power.
\newblock A characterization of generically rigid frameworks on surfaces of
  revolution.
\newblock {\em SIAM Journal on Discrete Mathematics}, 28(4):2008--2028, 2014.
\newblock \href {https://doi.org/10.1137/130913195}
  {\path{doi:10.1137/130913195}}.

\bibitem{Poll1927}
H.~Pollaczek-Geiringer.
\newblock \"{U}ber die gliederung ebener fachwerke.
\newblock {\em Zeitschrift für Angewandte Mathematik und Mechanik},
  7(1):58--72, 1927.
\newblock \href {https://doi.org/10.1002/zamm.19270070107}
  {\path{doi:10.1002/zamm.19270070107}}.

\bibitem{RH-Data}
G.~Royle and D.~Holt.
\newblock Vertex-transitive graphs on fewer than 48 vertices, 2020.
\newblock [Data set].
\newblock \href {https://doi.org/10.5281/zenodo.4010122}
  {\path{doi:10.5281/zenodo.4010122}}.

\bibitem{Serv}
B.~Servatius.
\newblock On the rigidity of ramanujan graphs.
\newblock {\em Annales Universitatis Scientiarum Budapestinensis de Rolando
  E\"{o}tv\"{o}s Nominatae. Sectio Mathematica}, 43:165--170, 2000.

\bibitem{Tay84}
T.~S. Tay.
\newblock Rigidity of multi-graphs. i: Linking rigid bodies in $n$-space.
\newblock {\em Journal of Combinatorial Theory. Series B}, 36(1):95--112, 1984.
\newblock \href {https://doi.org/10.1016/0095-8956(84)90016-9}
  {\path{doi:10.1016/0095-8956(84)90016-9}}.

\bibitem{TayWhiteley82}
T.~S. Tay and W.~Whiteley.
\newblock Recent advances in the generic rigidity of structures.
\newblock {\em Structural Topology}, 9:31--38, 1984.

\bibitem{Tutt61}
W.~T. Tutte.
\newblock On the problem of decomposing a graph into $n$ factors.
\newblock {\em Journal of the London Mathematical Society}, 36:221--230, 1961.
\newblock \href {https://doi.org/10.1112/jlms/s1-36.1.221}
  {\path{doi:10.1112/jlms/s1-36.1.221}}.

\end{thebibliography}

}

\end{document}